\title{The dynamical hierarchy for Roelcke precompact Polish groups}
\author{Tom\'as Ibarluc\'ia}
\address{Universit\'e de Lyon \\Universit\'e Claude Bernard Lyon 1 \\CNRS UMR 5208, Institut Camille Jordan \\43 blvd. du 11 novembre 1918\\69622 Villeurbanne Cedex \\France}
\email{ibarlucia@math.univ-lyon1.fr}
\thanks{Research partially supported by GruPoLoCo (ANR-11-JS01-0008) and ValCoMo (ANR-13-BS01-0006)}
\def\ZZ{\mathbb{Z}}
\def\QQ{\mathbb{Q}}
\def\RR{\mathbb{R}}
\def\AA{\mathcal{A}}
\def\BB{\mathcal{B}}
\def\CC{\mathcal{C}}
\DeclareMathOperator{\acl}{acl}
\DeclareMathOperator{\tp}{tp}
\DeclareMathOperator{\Aut}{Aut}
\DeclareMathOperator{\Iso}{Iso}
\DeclareMathOperator{\Homeo}{Homeo}
\DeclareMathOperator{\Hilb}{Hilb}
\DeclareMathOperator{\WAP}{WAP}
\DeclareMathOperator{\AP}{AP}
\DeclareMathOperator{\Asp}{Asp}
\DeclareMathOperator{\SUC}{SUC}
\DeclareMathOperator{\RUC}{RUC}
\DeclareMathOperator{\LUC}{LUC}
\DeclareMathOperator{\UC}{UC}
\DeclareMathOperator{\DEF}{Def}
\DeclareMathOperator{\Tame}{Tame}
\DeclareMathOperator{\Null}{Null}
\DeclareMathOperator{\NIP}{NIP}
\theoremstyle{plain}        \newtheorem{fact}{Fact}[section]
\theoremstyle{plain}        \newtheorem{theorem}[fact]{Theorem}
\theoremstyle{plain}        \newtheorem{lem}[fact]{Lemma}
\theoremstyle{plain}        \newtheorem{prop}[fact]{Proposition}
\theoremstyle{plain}        \newtheorem{cor}[fact]{Corollary}
\theoremstyle{definition}   \newtheorem{rem}[fact]{Remark} 
\theoremstyle{definition}   \newtheorem{defin}[fact]{Definition}
\theoremstyle{definition}   \newtheorem{note}[fact]{Note}
\theoremstyle{definition}   \newtheorem{question}[fact]{Question}
\theoremstyle{definition}   \newtheorem{example}[fact]{Example}
\g@addto@macro\bfseries{\boldmath}
\begin{document}
\begin{abstract} We study several distinguished function algebras on a Polish group $G$, under the assumption that $G$ is Roelcke precompact. We do this by means of the model-theoretic translation initiated by Ben Yaacov and Tsankov: we investigate the dynamics of $\aleph_0$-categorical metric structures under the action of their automorphism group. We show that, in this context, every strongly uniformly continuous function (in particular, every Asplund function) is weakly almost periodic. We also point out the correspondence between tame functions and $\NIP$ formulas, deducing that the isometry group of the Urysohn sphere is $\Tame\cap\UC$-trivial.\end{abstract}
\maketitle
\tableofcontents

\section*{Introduction}

In a series of recent papers, Glasner and Megrelishvili \cite{glameg06,glameg08,meg08,glameg12,glameg10} have studied different classes of functions on topological dynamical systems, arising from compactifications with particular properties. Thus, for example, a real-valued continuous function on a $G$-space $X$ might be \emph{almost periodic}, \emph{Hilbert-representable}, \emph{weakly almost periodic}, \emph{Asplund-representable} or \emph{tame}, and this classes form a hierarchy $$\AP(X)\subset\Hilb(X)\subset\WAP(X)\subset\Asp(X)\subset\Tame(X)\subset\RUC(X)$$
of subalgebras of the class of \emph{right uniformly continuous} functions.
These algebras can be defined in different ways. The latter coincides with the class of functions that can be in some sense represented through a Banach space, and from this point of view the previous subalgebras can be identified, respectively, with the cases when the Banach space is asked to be Euclidean, Hilbert, reflexive, Asplund or Rosenthal.
When $X=G$ and the action is given by group multiplication, functions might also be \emph{left uniformly continuous}, and if they are simultaneously in $\RUC(G)$ they form part of the algebra $\UC(G)$ of \emph{Roelcke uniformly continuous} functions.

We study these algebras for the case of Roelcke precompact Polish groups, by means of the model-theoretic translation developed by Ben Yaacov and Tsankov \cite{bentsa}. As established in their work, Roelcke precompact Polish groups are exactly those arising as automorphism groups of $\aleph_0$-categorical metric structures. Moreover, one might turn continuous functions on the group into definable predicates on the structure. Under this correlation, the authors showed, weakly almost periodic functions translate into \emph{stable} formulas: a most studied concept of topological dynamics leads to one of the crucial notions of model theory. This provides a unified understanding of several previously studied examples: the permutation group $S(\mathbb{N})$, the unitary group $\mathcal{U}(\ell^2)$, the group of measure preserving transformations of the unit interval $\Aut(\mu)$, the group $\Aut(RG)$ of automorphisms of the random graph or the isometry group $\Iso(\mathbb{U}_1)$ of the Urysohn sphere, among many other ``big'' groups, are automorphism groups of $\aleph_0$-categorical structures, thus Roelcke precompact. In the first three cases the structures are stable, thus $\WAP(G)=\UC(G)$: their $\WAP$ and Roelcke compactifications coincide. Using model-theoretic insight, the authors were able to prove for example that, whenever the latter is the case, the group $G$ is totally minimal.

The so-called \emph{dynamical hierarchy} presented above has been partially described for some of the habitual examples. For the groups $S(\mathbb{N})$, $\mathcal{U}(\ell^2)$ or $\Aut(\mu)$ we have in fact $\Hilb(G)=\UC(G)$; see \cite[\textsection 6.3--6.4]{glameg13}. From \cite[\textsection 6]{bentsa} we know, for instance, that the inclusion $\WAP(G)\subset\UC(G)$ is strict for the group $\Aut(\QQ,<)$ of monotone bijections of the rationals. More drastically, Megrelishvili \cite{meg01} had shown that the group $H_+[0,1]$ of orientation preserving homeomorphisms of the unit interval, also Roelcke precompact, has a trivial $\WAP$-compactification: $\WAP(G)$ is the algebra of constants; in \cite[\textsection 10]{glameg08} this conclusion was extended to the algebra $\Asp(G)$ (and indeed to the algebra $\SUC(G)$ of \emph{strongly uniformly continuous functions}, containing $\Asp(G)$). The same was established for the group $\Iso(\mathbb{U}_1)$. If one drops the requirement of Roelcke precompactness, all inclusions in the hierarchy are known to be strict in appropriate examples.

We show that in fact $\WAP(G)=\Asp(G)=\SUC(G)$ for every Roelcke precompact Polish group~$G$. In addition, we observe that Roelcke uniformly continuous tame functions correspond to $\NIP$ formulas on the model-theoretic side. Thus, for instance, $\Asp(G)\subsetneq\Tame(G)\cap\UC(G)=\UC(G)$ for $G=\Aut(\QQ,<)$, while $\WAP(G)=\Tame(G)\cap\UC(G)\subsetneq\UC(G)$ for $G=\Aut(RG)$ or $G=\Homeo(2^\omega)$. We also deduce that the $\Tame\cap\UC$-compactification of $\Iso(\mathbb{U}_1)$ is trivial.

Our approach is model-theoretic, and we shall assume some familiarity with continuous logic as presented in \cite{benusv10} or \cite{bbhu08}; nevertheless, we give an adapted introduction to $\aleph_0$-categorical metric structures that we hope can be helpful to an interested reader with no background in logic. We will mainly study the dynamics of $\aleph_0$-categorical structures, then derive the corresponding conclusions for their automorphism groups.

The algebra $\Hilb(G)$ will not be addressed in this paper. Unlike the properties of stability and dependence, which can be studied locally (that is, formula-by-formula), the model-theoretic interpretation of the algebra $\Hilb(G)$ presents a different phenomenon, and will be considered in a future work.

\medskip

\noindent\textbf{Acknowledgements.} I am very much indebted to Ita\"i Ben Yaacov, who introduced me to his work with Todor Tsankov and asked whether a topological analogue of model-theoretic dependence could be found. I am grateful to Michael Megrelishvili for valuable discussions and observations, particularly Theorem \ref{megrelishvili} below. I want to thank Eli Glasner and Adriane Ka\"ichouh for their interest in reading a preliminary copy of this article and for their comments. Finally, I thank the anonymous referee for his detailed suggestions and corrections; they helped to improve significantly the exposition of this paper.

\noindent\hrulefill

\section{The setting and basic facts}

\subsection{$G$-spaces and compactifications} Most of the material on topology in this and subsequent sections comes from the works of Glasner and Megrelishvili referred to in the introduction.

A \emph{$G$-space} $X$ is given by a continuous left action of a topological group $G$ on a topological space~$X$. Then $G$ acts as well on the space $\CC(X)$ of continuous bounded real-valued functions on~$X$, by $gf(x)=f(g^{-1}x)$. If $X$ is not compact, however, the action on $\CC(X)$ need not be continuous for the topology of the uniform norm on $\CC(X)$. The functions $f\in\CC(X)$ for which the orbit map $g\in G\mapsto gf\in Gf\subset\CC(X)$ is norm-continuous are called \emph{right uniformly continuous} ($\RUC$). That is, $f\in\RUC(X)$ if for every $\epsilon>0$ there is a neighborhood $U$ of the identity of $G$ such that $$|f(g^{-1}x)-f(x)|<\epsilon$$ for all $x\in X$ and $g\in U$. When $X=G$ is considered as a $G$-space with the regular left action, we also have the family $\LUC(G)$ of \emph{left uniformly continuous functions}, where the condition is that $|f(xg)-f(x)|$ be small for all $x\in G$ and $g$ close to the identity. The intersection $\UC(G)=\RUC(G)\cap\LUC(G)$ forms the algebra of \emph{Roelcke uniformly continuous functions} on $G$. The family $\RUC(X)$ is a uniformly closed $G$-invariant subalgebra of $\CC(X)$, and the same is true for $\LUC(G)$ and $\UC(G)$ in the case $X=G$.

If $X$ is compact, then $\RUC(X)=\CC(X)$; in the case $X=G$, $\UC(G)=\CC(G)$. Moreover, recall that a compact Hausdorff space $X$ admits a unique compatible uniformity (see, for example, \cite[II, \textsection 4, \textnumero 1]{bourbakiTG}), and that any continuous function from $X$ to another uniform space is automatically uniformly continuous.

\begin{note}\label{metric setting} Our spaces, when not compact, will be metric, and $G$ will act on $X$ by uniformly continuous transformations (in practice, by isometries). In this case, we will usually restrict our attention to those functions $f\in\RUC(X)$ that are also uniformly continuous with respect to the metric on $X$; we denote this family of functions by $\RUC_u(X)$. It is a uniformly closed $G$-invariant subalgebra. The same subscript $u$ might be added to the other function algebras in the dynamical hierarchy, in order to keep this restriction in mind.

Our groups will be Polish. When we take $X=G$, we assume that a left-invariant, compatible, bounded metric $d_L$ on $G$ has been fixed; its existence is ensured by Birkhoff--Kakutani theorem, see for example \cite[p.~28]{berberian}. The subscript $u$ will then refer to this metric, and one should notice that $\RUC_u(G)=\UC(G)$. The algebra $\SUC(G)$, containing $\Asp(G)$ (both to be defined later), is always a subalgebra of $\UC(G)$ (see Section \ref{wap=suc}); in particular, $\SUC_u(G)=\SUC(G)$ and $\Asp_u(G)=\Asp(G)$. As pointed out to us by M. Megrelishvili, this is not the case for the algebra $\Tame(G)$ (see the discussion after Theorem~\ref{megrelishvili}), so we will mind the distinction between $\Tame(G)$ and $\Tame_u(G)=\Tame(G)\cap\UC(G)$.

From the equality $\RUC_u(G)=\UC(G)$ we see that $\RUC_u(G)$ does not depend on the particular choice of $d_L$. Thus, so far, we could omit the metric $d_L$ and consider simply the natural uniformities on $G$ (see for instance \cite[III, \textsection 3, \textnumero 1]{bourbakiTG} for an explanation of these). However, our approach will require to consider metric spaces, and in fact complete ones. This is why we will consider the space $(G,d_L)$, and mainly its completion $\widehat{G}_L=\widehat{(G,d_L)}$, which is naturally a $G$-space. We remark that the restriction map $\RUC_u(\widehat{G}_L)\to\UC(G)$ is a norm-preserving $G$-isomorphism.
\end{note}

A \emph{compactification} of a $G$-space $X$ is a continuous $G$-map $\nu\colon X\to Y$ into a compact Hausdorff $G$-space $Y$, whose range is dense in $Y$. In our context it will be important to consider compactifications that are uniformly continuous: in this case we shall say, to make the distinction, that $\nu$ is a \emph{$u$-compactification} of~$X$. A function $f\in\CC(X)$ \emph{comes from a compactification} $\nu\colon X\to Y$ if there is $\tilde{f}\in\CC(Y)$ such that $f=\tilde{f}\nu$; note that the extension $\tilde{f}$ is unique.

If $f$ comes from a compactification of $X$, then certainly $f\in\RUC(X)$. The converse is true. In fact, there is a canonical one-to-one correspondence between compactifications of $X$ and uniformly closed $G$-invariant subalgebras of $\RUC(X)$ (a subalgebra is always assumed to contain the constants). The subalgebra $\AA_\nu$ corresponding to a compactification $\nu\colon X\to Y$ is given by the family of all functions $f\in\CC(X)$ that come from~$\nu$. Conversely, the compactification $X^\AA$ corresponding to one such algebra $\AA\subset\RUC(X)$ is the space $X^\AA$ of characters of $\AA$ together with the map $\nu_\AA\colon X\to X^\AA$, $\nu_\AA(x)=\{f\in\AA:f(x)=0\}$. We recall that the topology on $X^\AA$ is generated by the basic open sets $U_{f,\delta}=\{p\in X^\AA:|\tilde{f}(p)|<\delta\}$ for $f\in\AA$ and $\delta>0$; here, $\tilde{f}(p)$ is the unique constant $r\in\RR$ such that $f-r\in p$.

In this way we always have the equality $\AA=\AA_{\nu_\AA}$ and a unique $G$-homeomorphism $j_\nu\colon X^{\AA_\nu}\to Y$ with $\nu=j_\nu\nu_{\AA_\nu}$. In particular, if $f\in\AA$, then $f$ comes from $\nu_\AA$ (and the extension $\tilde{f}\in\CC(X^\AA)$ is defined as above). Finally, the correspondence is functorial, in the sense that inclusions $\AA\subset\BB$ of subalgebras correspond bijectively to continuous $G$-maps $j\colon X^\BB\to X^\AA$ such that $\nu_\AA=j\nu_\BB$. When we say that a given compactification is minimal or maximal within a certain family, we refer to the order induced by these morphisms; in the previous situation, for example, $\nu_\BB$ is larger than~$\nu_\AA$.

More details on this correspondence can be found in \cite[IV, \textsection 5]{vriesElements}, particularly Theorem 5.18 (though the construction given there is quite different, not based on maximal ideal spaces; for the basics on maximal ideal spaces see \cite[VII, \textsection 8]{conwayFunctional}).

We point out here that the correspondence restricts well to our metric setting, namely, it induces a one-to-one correspondence between $u$-compactifications of $X$ and uniformly closed $G$-invariant subalgebras of $\RUC_u(X)$. Of course, if $\nu$ is a $u$-compactification of $X$ then any function coming from $\nu$ is uniformly continuous, so $A_\nu\subset\RUC_u(X)$. Conversely, we have the following.

\begin{fact} If $\AA$ is a uniformly closed $G$-invariant subalgebra of $\RUC_u(X)$ then $\nu_\AA\colon X\to X^\AA$ is uniformly continuous.\end{fact}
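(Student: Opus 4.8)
The plan is to use the canonical description of the unique uniformity on the compact space $X^\AA$ in terms of continuous functions, and then reduce the uniform continuity of $\nu_\AA$ to the uniform continuity on $X$ of finitely many members of $\AA$. Write $d$ for the metric on $X$, and recall that, $X^\AA$ being compact Hausdorff, its unique compatible uniformity admits a base of entourages of the form
$$V_{h_1,\dots,h_n;\epsilon}=\bigl\{(p,q)\in X^\AA\times X^\AA:|h_i(p)-h_i(q)|<\epsilon\text{ for }i=1,\dots,n\bigr\},$$
where $h_1,\dots,h_n\in\CC(X^\AA)$ and $\epsilon>0$; indeed the pseudometrics $(p,q)\mapsto|h(p)-h(q)|$ for $h\in\CC(X^\AA)$ generate this uniformity, since the continuous functions separate points and induce the topology.

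Next I would identify these continuous functions with the elements of $\AA$. By the correspondence recalled above we have $\AA=\AA_{\nu_\AA}$, so the extension map $f\mapsto\tilde f$ is a bijection from $\AA$ onto $\CC(X^\AA)$, satisfying $\tilde f\circ\nu_\AA=f$ for every $f\in\AA$. Hence each entourage in the base can be rewritten with $f_i\in\AA$ such that $\tilde f_i=h_i$, and for all $x,y\in X$ one has
$$(\nu_\AA(x),\nu_\AA(y))\in V_{\tilde f_1,\dots,\tilde f_n;\epsilon}\iff|f_i(x)-f_i(y)|<\epsilon\text{ for }i=1,\dots,n.$$

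Finally I would invoke the hypothesis $\AA\subset\RUC_u(X)$: each $f_i$ is uniformly continuous for the metric $d$, so there is $\delta_i>0$ such that $d(x,y)<\delta_i$ implies $|f_i(x)-f_i(y)|<\epsilon$; taking $\delta=\min_i\delta_i>0$ gives that $d(x,y)<\delta$ implies $(\nu_\AA(x),\nu_\AA(y))\in V_{\tilde f_1,\dots,\tilde f_n;\epsilon}$, which is exactly the uniform continuity of $\nu_\AA$.

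The only genuinely delicate point is the first step, namely that finite families of continuous functions furnish a base for the uniformity of $X^\AA$, so that each basic entourage involves \emph{only finitely many} functions. This finiteness is what allows a single $\delta$ to work simultaneously for all the relevant functions, and hence uniformly over all points of $X$; without it the argument would produce continuity but not uniform continuity. Everything else is a direct translation through the identity $\tilde f\circ\nu_\AA=f$ together with the uniform continuity built into the definition of $\RUC_u(X)$.
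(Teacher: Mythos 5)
Your proof is correct, but it takes a different route from the paper's. The paper argues by contradiction: it takes sequences $x_n,y_n$ with $d(x_n,y_n)\to 0$ whose images stay outside a fixed entourage, describes that entourage as $\bigcup_{i<k}U_i\times U_i$ for a finite cover of $X^\AA$ by basic open sets, and then extracts a convergent subnet of $\nu_\AA(x_n)$ to land both $\nu_\AA(x_n)$ and $\nu_\AA(y_n)$ in a single $U_i$, contradicting the assumption. You instead give a direct $\epsilon$--$\delta$ argument, using the other standard description of the unique uniformity on a compact Hausdorff space: the base of entourages $V_{h_1,\dots,h_n;\epsilon}$ determined by finitely many continuous functions, which via the identity $\AA=\AA_{\nu_\AA}$ (so that $f\mapsto\tilde f$ maps $\AA$ onto $\CC(X^\AA)$) pull back exactly to finitely many members of $\AA$. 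Both arguments ultimately rest on the same two ingredients --- uniqueness of the compatible uniformity on $X^\AA$ and a reduction of each basic entourage to finitely many functions of $\AA$, to which $\RUC_u$ then applies --- but your version avoids the subnet and the contradiction, and makes the role of the finiteness explicit, which you rightly identify as the one point needing justification. The justification you give (continuous functions on a compact Hausdorff space separate points and induce the topology, so the initial uniformity they define is compatible and hence is the unique one) is standard and adequate; in this particular case it is even more immediate, since the topology of $X^\AA$ is by definition generated by the sets $U_{f,\delta}$. Either approach is a complete proof.
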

\begin{proof}
Suppose to the contrary that there is an entourage $\epsilon$ of the uniformity of $X^\AA$ such that for every $n$ there are $x_n,y_n\in X$ with distance $d(x_n,y_n)<1/n$ but such that $(\nu_\AA(x_n),\nu_\AA(y_n))\notin\epsilon$. We can assume the entourage is of the form $\epsilon=\bigcup_{i<k}U_i\times U_i$ for some cover of $X^\AA$ by basic open sets $$U_i=\{p\in X^\AA:|\tilde{f}_i(p)|<\delta_i\}$$ given by functions $f_i\in\AA$ and positive reals $\delta_i$.

Passing to a subnet we can assume that $\nu_\AA(x_n)$ converges to $p\in X^\AA$, say $p\in U_i$ for some $i<k$. Since $\AA$ is contained in $\RUC_u(X)$ (not merely in $\RUC(X)$) for $n$ big enough we have $|f_i(x_n)-f_i(y_n)|<\frac{1}{2}(\delta_i-|\tilde{f}_i(p)|)$, and also $|f_i(x_n)-\tilde{f}_i(p)|<\frac{1}{2}(\delta_i-|\tilde{f}_i(p)|)$. Thus for the same $n$ we have $|f_i(x_n)|<\delta_i$ and $|f_i(y_n)|<\delta_i$. This implies $(\nu_\AA(x_n),\nu_\AA(y_n))\in\epsilon$, a contradiction.
\end{proof}

\begin{rem}\label{compactifications of G} Let $G$ be a Polish group. Every $u$-compactification of $G$ factorizes through the left completion $\widehat{G}_L$, and we have a canonical one-to-one correspondence between $u$-compactifications of $G$ and of $\widehat{G}_L$.\end{rem}

The maximal $u$-compactification of a Polish group $G$, that is, the compactification $G^{\UC}$ associated to the algebra $\UC(G)$, is called the \emph{Roelcke compactification} of $G$. If we fix any $g\in G$, the function $d_g(h)=d_L(g,h)$ is in $\UC(G)$. This implies that the compactification $G\to G^{\UC}$ is always a topological embedding.

On the other hand, for any $f\in\RUC(X)$ there is a minimal compactification of $X$ from which $f$ comes, namely the one corresponding to the closed unital algebra generated by the orbit $Gf$ in $\CC(X)$. It is called the \emph{cyclic $G$-space} of $f$, and denoted by $X_f$.

An important part of the project developed in \cite{glameg06,glameg12,glameg13} has been to classify the dynamical systems (and particularly their compactifications) by the possibility of representing them as an isometric action on a ``good'' Banach space. Although we will not make use of it in the present paper, the precise meaning of a \emph{representation} of a $G$-space $X$ on a Banach space $V$ is given by a pair $$h\colon G\to\Iso(V),\ \alpha\colon X\to V^*,$$ where $h$ is a continuous homomorphism and $\alpha$ is a weak$^*$-continuous bounded $G$-map with respect to the dual action $G\times V^*\to V^*$, $(g\phi)(v)=\phi(h(g)^{-1}(v))$. The topology on $\Iso(V)$ is that of pointwise convergence. The representation is \emph{faithful} if $\alpha$ is a topological embedding.

For a family $\mathcal{K}$ of Banach spaces, a $G$-space $X$ is \emph{$\mathcal{K}$-representable} if it admits a faithful representation on a member $V\in\mathcal{K}$, and it is \emph{$\mathcal{K}$-approximable} if it can be topologically $G$-embedded into a product of $\mathcal{K}$-representable $G$-spaces.

\subsection{Roelcke precompact Polish groups} Following Uspenskij \cite[\textsection 4]{uspComp}, the infimum of the left and right uniformities on a Polish group $G$ is called the \emph{Roelcke uniformity} of the group. Accordingly, $G$ is \emph{Roelcke precompact} if its completion with respect to this uniformity is compact ---and thus coincides with the Roelcke compactification of $G$ as defined above. This translates to the condition that for every non-empty neighborhood $U$ of the identity there is a finite set $F\subset G$ such that $UFU=G$.

Let $G$ be a Polish group acting by isometries on a complete metric space $X$. Given a point $x\in X$, we denote by $[x]=\overline{Gx}$ the closed orbit of $x$ under the action. Then, we define the metric quotient $X\sslash G$ as the space $\{[x]:x\in X\}$ of closed orbits endowed with the induced metric $d([x],[y])=\inf_{g\in G}d(gx,y)$.

In the rest of the paper, given a countable (possibly finite) set $\alpha$, we will identify it with an ordinal $\alpha\leq\omega$ and consider the power $X^\alpha$ as a metric $G$-space with the distance $d(x,y)=\sup_{i<\alpha}2^{-i}d(x_i,y_i)$ and the diagonal action $gx=(gx_i)_{i<\alpha}$. Of course, the precise choice of the distance is arbitrary and we will only use that it is compatible with the product uniformity and that the diagonal action is by isometries.

The action of $G$ on $X$ is \emph{approximately oligomorphic} if the quotients $X^\alpha\sslash G$ are compact for every $\alpha<\omega$ (equivalently, for $\alpha=\omega$). Then, Theorem 2.4 in \cite{bentsa} showed the following.

\begin{theorem}\label{Roelcke bentsa} A Polish group $G$ is Roelcke precompact if and only if the action of $G$ on its left completion $\widehat{G}_L$ is approximately oligomorphic or, equivalently, if $G$ can be embedded in the group of isometries of a complete metric space $X$ in such a way that the induced action of $G$ on $X$ is approximately oligomorphic.\end{theorem}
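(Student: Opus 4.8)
The plan is to prove the cycle of implications $(2)\Rightarrow(3)\Rightarrow(1)\Rightarrow(2)$, where $(1)$ is Roelcke precompactness, $(2)$ is approximate oligomorphy of the action on $\widehat{G}_L$, and $(3)$ is the existence of a complete metric $G$-space $X$ into whose isometry group $G$ embeds with approximately oligomorphic action. The implication $(2)\Rightarrow(3)$ is immediate: take $X=\widehat{G}_L$, on which the left translation action extends by isometries, and note that $g\mapsto(x\mapsto gx)$ is a topological group embedding $G\hookrightarrow\Iso(\widehat{G}_L)$, the group topology being, by left-invariance and compatibility of $d_L$, exactly the topology of convergence at the point $e$. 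Throughout I will use that $G^\alpha$ is dense in $\widehat{G}_L^\alpha$ and that, under the diagonal action, $[(g_0,\dots,g_{n-1})]=[(e,g_0^{-1}g_1,\dots,g_0^{-1}g_{n-1})]$, so the orbit of a tuple from $G$ is coded by an $(n-1)$-tuple with first coordinate $e$.

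For $(3)\Rightarrow(1)$, I fix such an $X$ and recall that a basis of neighbourhoods of the identity in $G$ is given by the (symmetric) sets $V_{\bar a,\epsilon}=\{g:\max_{i<n}d(ga_i,a_i)<\epsilon\}$ for finite tuples $\bar a\in X^n$. The idea is to match double cosets with orbits of pairs of tuples: writing $V=V_{\bar a,\epsilon}$, a short computation shows that $g'\in VgV$ holds (up to a harmless adjustment of $\epsilon$) exactly when the orbits $[(\bar a,g\bar a)]$ and $[(\bar a,g'\bar a)]$ are close in $X^{2n}\sslash G$; indeed $g'=v_1gv_2$ with $v_i\in V$ gives $g'\bar a$ near $v_1g\bar a$ and $v_1^{-1}\bar a$ near $\bar a$, so $h=v_1^{-1}$ witnesses closeness, and conversely any $h$ with $h\bar a\approx\bar a$ and $hg\bar a\approx g'\bar a$ yields $h,\,g'^{-1}hg\in V$, hence $g\in Vg'V$. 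Thus the map $g\mapsto[(\bar a,g\bar a)]$ transports the Roelcke uniformity of $G$ onto the metric uniformity induced on its image inside the compact space $X^{2n}\sslash G$. As a subset of a totally bounded space is totally bounded, $G$ is totally bounded for the Roelcke uniformity, which is Roelcke precompactness.

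The substantial direction is $(1)\Rightarrow(2)$, for which I reduce, via the noted equivalence between finite $\alpha$ and $\alpha=\omega$, to showing that each $\widehat{G}_L^n\sslash G$ is compact; completeness of these quotients being routine (the orbit-closure metric on a complete space under an isometric action is complete), the work lies in total boundedness. The base case $n=2$ is exactly Roelcke precompactness: by the density remark it suffices to cover $\{[(e,w)]:w\in G\}$, and the weighted distance makes $[(e,u)]$ and $[(e,v)]$ fall within $\delta$ precisely when there is $h$ with $\|h\|$ small and $d_L(hu,v)$ small, i.e.\ when $v\in V_{O(\delta)}\,u\,V_{O(\delta)}$; so a finite Roelcke cover $V F V=G$ yields a finite $\delta$-net. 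For the inductive step I normalise the first coordinate to $e$ (spending the left translation freedom), cover it by the case $n=2$, and then must cover the remaining coordinates using only the residual freedom, namely the $\epsilon$-stabiliser $S=\{h:d_L(h\,t,t)<\epsilon\}$ of the chosen representative point $t$.

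The crux, and the step I expect to be the main obstacle, is precisely this inheritance: one must show that such an $\epsilon$-stabiliser again admits a finite two-sided covering of the form $S F' V=G$ in its action on the remaining coordinates, and then assemble the coordinate-wise witnesses into a single element $h$. The difficulty is genuine because $S$ need not contain any neighbourhood of $e$ — the bound $d_L(h\,t,t)=\|t^{-1}h^{-1}t\|$ involves a conjugate and is not controlled by $\|h\|$ — so Roelcke precompactness cannot be applied to it directly. The resolution will go through the observation that these approximate stabilisers are themselves Roelcke precompact in their induced action, which is where the group structure of $\widehat{G}_L$ and the induction on arity do the real work.
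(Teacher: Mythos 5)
A preliminary remark on the comparison you asked for: the paper contains no proof of this statement. It is quoted as Theorem 2.4 of Ben Yaacov and Tsankov's paper \cite{bentsa}, so there is no internal argument to measure yours against; I can only judge the attempt on its own terms. On those terms, the two easy implications are handled correctly: for $(3)\Rightarrow(1)$, the identification of the double coset $VgV$, $V=V_{\bar a,\epsilon}$, with a metric neighbourhood of $[(\bar a,g\bar a)]$ in $X^{2n}\sslash G$ is the right computation, and total boundedness of that quotient does yield $G=VFV$; the implication $(2)\Rightarrow(3)$ and the base case $n=2$ of $(1)\Rightarrow(2)$ are likewise fine.

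The proof is nonetheless incomplete, because the inductive step of $(1)\Rightarrow(2)$ --- the only place where the theorem has real content --- is announced rather than carried out. After normalising the first coordinate you must produce, for $(g_2,\dots,g_{n-1})$, a \emph{single} witness $h$ lying in the approximate stabiliser $S=\{h:d_L(ht,t)<\epsilon\}$ that simultaneously pushes all remaining coordinates into a fixed finite list. You correctly diagnose why Roelcke precompactness of $G$ cannot be applied to $S$ (it contains no neighbourhood of the identity, since $t^{-1}h^{-1}t$ is an uncontrolled conjugate), but the proposed remedy --- that ``these approximate stabilisers are themselves Roelcke precompact in their induced action'' --- is neither proved nor even well-posed: $S$ is not a subgroup (it is symmetric, but products only land in the $2\epsilon$-stabiliser), so ``Roelcke precompact'' has no meaning for it without further work, and the covering statement $G=SF'V$ you would need is essentially equivalent in strength to the inductive step itself. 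The gap is not cosmetic: the natural shortcuts fail. For instance, mapping an orbit $[(g_1,\dots,g_m)]$ to the matrix of ratios $(g_i^{-1}g_j)$ in the (compact) Roelcke completion is uniformly continuous and injective on exact orbits, but showing it is a \emph{uniform embedding} for $m\geq 3$ runs into exactly the same problem of finding one $v$ working for all coordinates at once; and trying to choose the witnesses coordinate by coordinate requires covering a set of the form $Vg$ by finitely many right cosets $fV$, which would force $G=FV$ and hence compactness of $G$. So the hard direction genuinely requires an additional idea, which is the substance of Ben Yaacov and Tsankov's Theorem 2.4; as written, your argument stops exactly where that idea is needed.
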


Recall that the group of isometries of a complete metric space is considered as a Polish group with the topology of pointwise convergence.

Roelcke precompact Polish groups provide a rich family of examples of topological groups with interesting dynamical properties. By means of the previous characterization, Ben Yaacov and Tsankov initiated the study of these groups from the viewpoint of continuous logic.

\subsection{$\aleph_0$-categorical metric structures as $G$-spaces} Thus we turn to logic. We present the basic concepts and facts of the model theory of metric structures. About the general theory we shall be terse, and we refer the reader to the thorough treatments of \cite{benusv10} and \cite{bbhu08}; in fact, we will mostly avoid the syntactical aspect of logic. Instead, we will give precise topological reformulations for the case of $\aleph_0$-categorical structures. At the same time, we explain the relation to the dynamical notions introduced before.

A metric first-order structure is a complete metric space $(M,d)$ of bounded diameter together with a family of distinguished \emph{basic predicates} $f_i\colon M^{n_i}\to\RR$ ($n_i<\omega$), $i\in I$, which are uniformly continuous and bounded. (The structure may also have distinguished elements and basic functions from finite powers of $M$ into $M$, as is the case of the boolean algebra $\BB$ considered in the examples; but these can be coded with appropriate basic predicates.) An automorphism of the structure is an isometry $g\in\Iso(M)$ such that each basic predicate $f_i$ is invariant for the diagonal action of $g$ on $M^{n_i}$, that is, $f_i(gx)=f_i(x)$ for all $x\in M^{n_i}$. For a separable structure $M$, the space $\Aut(M)$ of all automorphisms of $M$ is a Polish group under the topology of pointwise convergence.

If $M$ is separable and isomorphic to any other separable structure with the same \emph{first-order properties}, then $M$ is \emph{$\aleph_0$-categorical}. A classical result in model-theory (see \cite{bbhu08}, Theorem 12.10) implies that this is equivalent to say that $M$ is separable and the action of $\Aut(M)$ on $M$ is approximately oligomorphic. In particular, by Theorem \ref{Roelcke bentsa}, $\Aut(M)$ is Roelcke precompact.

The structure $M$ is \emph{classical} if $d$ is the Dirac distance and the basic predicates are $\{0,1\}$-valued. In this case, $M$ is $\aleph_0$-categorical if and only if it is countable and the action of $\Aut(M)$ on $M$ is \emph{oligomorphic}, i.e.\ the quotients $M^n\sslash\Aut(M)$ are finite for every $n<\omega$.

A \emph{definable predicate} is a function $f\colon M^\alpha\to\RR$, with $\alpha$ a countable set, constructed from the basic predicates and the distance by continuous combinations, rearranging of the variables, approximate quantification (i.e.\ suprema and infima) and uniform limits. Every definable predicate is $\Aut(M)$-invariant, uniformly continuous and bounded. If $M$ is $\aleph_0$-categorical, then $f\colon M^\alpha\to\RR$ is a definable predicate if and only if it is continuous and $\Aut(M)$-invariant; see for example \cite{benkai13}, Proposition 1. 

\begin{defin} In this paper, we shall use the term \emph{formula} to denote a definable predicate in two countable sets of variables, i.e.\ a function $f\colon M^\alpha\times M^\beta\to\RR$, for countable sets $\alpha,\beta$, which is a definable predicate once we rewrite the domain as a countable power of $M$. We will denote it by $f(x,y)$ to specify the two variables of the formula. Given a formula $f(x,y)$ and an a parameter $a\in M^\alpha$, we denote by $f_a\in\CC(M^\beta)$ the continuous function defined by $f_a(b)=f(a,b)$. When we make no reference to $\alpha$ or $\beta$, we will assume that $\alpha=\omega$ and $\beta=1$.\end{defin}

Whenever we talk of a metric structure $M$ as a $G$-space, we understand that the group is $G=\Aut(M)$ and that it acts on $M$ in the obvious way. This $G$-space comes with a distinguished function algebra: the family of functions of the form $f_a$ for a formula $f(x,y)$ and a parameter $a\in M^\omega$. We will denote it by $\DEF(M)$, and it is in fact a uniformly closed $G$-invariant subalgebra of $\CC(M)$. More generally, if $a\in A^\omega$ for a subset $A\subset M$ (and the variable $y$ is of any length $\beta$), we will say that $f_a$ is an \emph{$A$-definable predicate} in the variable $y$. A $\emptyset$-definable predicate is just a definable predicate. The family of $A$-definable predicates in $y$ is clearly a subalgebra of $\CC(M^\beta)$, which is uniformly closed as the following shows.

\begin{fact} A uniform limit of $A$-definable predicates is an $A$-definable predicate.\end{fact}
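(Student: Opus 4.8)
The plan is to realize the limit as a single $A$-definable predicate by merging all the parameters into one countable tuple and then manufacturing a single formula whose value at that tuple is the desired limit. So let $(g_k)$ be a sequence of $A$-definable predicates in $y$ converging uniformly to $g$, and write each $g_k=(f^k)_{a_k}$ for a formula $f^k(x,y)$ and a parameter $a_k\in A^\omega$. After passing to a subsequence I may assume $\|g_{n+1}-g_n\|_\infty\le 2^{-n}$ for all $n$. Fixing a bijection $\omega\cong\omega\times\omega$, I would set $a=(a_n)_{n<\omega}\in A^\omega$ and, writing $\pi_n$ for the projection of $M^\omega$ onto the $n$-th block of coordinates, define $h^n(x,y)=f^n(\pi_n(x),y)$. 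This is again a formula, being obtained from $f^n$ by rearranging variables, and by construction $h^n(a,y)=f^n(a_n,y)=g_n(y)$.

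The naive attempt would be to take $h=\lim_n h^n$, but there is no reason for the $h^n$ to converge uniformly as functions of $(x,y)$: they only converge at the single point $x=a$. The key step is to force convergence by truncating the increments. Let $\tau_n\colon\RR\to\RR$ be the truncation $\tau_n(t)=\max(-2^{-n},\min(t,2^{-n}))$ and consider the partial sums
\[ H_N(x,y)=h^1(x,y)+\sum_{n=1}^{N-1}\tau_n\bigl(h^{n+1}(x,y)-h^n(x,y)\bigr). \]
Each $H_N$ is a formula, being a continuous combination of the formulas $h^n$, and since $\|H_{N+1}-H_N\|_\infty=\|\tau_N(h^{N+1}-h^N)\|_\infty\le 2^{-N}$ the sequence $(H_N)$ is uniformly Cauchy; its uniform limit $h$ is therefore again a formula, definable predicates being closed under uniform limits by definition.

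It remains to check that $h_a=g$, and this is exactly where the choice of subsequence pays off. At $x=a$ the increments are $h^{n+1}(a,y)-h^n(a,y)=g_{n+1}(y)-g_n(y)$, whose absolute value is at most $2^{-n}$, so each $\tau_n$ acts as the identity on them and the truncation is vacuous; hence $h(a,y)=g_1(y)+\sum_{n\ge1}\bigl(g_{n+1}(y)-g_n(y)\bigr)=\lim_n g_n(y)=g(y)$, exhibiting $g=h_a$ as an $A$-definable predicate. The only real obstacle is the one the truncation is designed to overcome: a single formula must be defined, and vary continuously, on all of $M^\omega\times M^\beta$, whereas the available data control the behaviour only at the distinguished tuple $a$. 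Clamping the increments reconciles these two demands at once, yielding honest uniform convergence off $a$ while remaining transparent at $a$; the parameter-merging step is then just the bookkeeping needed to make a single tuple carry all the original parameters.
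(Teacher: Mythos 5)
Your argument is correct and is essentially the paper's own proof: the paper also merges the parameters into a single tuple, passes to a fast-converging subsequence, and then takes the \emph{forced limit} of the formulas in the sense of Ben Yaacov--Usvyatsov, which is precisely the truncated-increment sum $h^1+\sum_n\tau_n(h^{n+1}-h^n)$ you construct by hand. The only difference is that you unpack the forced-limit construction explicitly where the paper cites it.
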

\begin{proof} Say we have formulas $f^n(x,y)$ and parameters $a_n\in A^\omega$ such that $f^n_{a_n}$ converges uniformly; without loss of generality we can assume that the tuples are the same, say $a=a_n$. Passing to a subsequence we can assume that $f^n_{a_n}$ converges fast enough, then define $f(x,y)$ as the \emph{forced limit} of the formulas $f^n(x,y)$ (see \cite[\textsection 3.2]{benusv10}, and compare with Lemma 3.11 therein). Then the limit of the predicates $f^n_a$ is~$f_a$.\end{proof}

The starting point for our analysis is the following observation, based on the ideas from \cite[\textsection 5]{bentsa}.

\begin{prop}\label{Form=RUC} For a metric structure $M$ we have $\DEF(M)\subset\RUC_u(M)$. If $M$ is $\aleph_0$-categorical, then moreover $\DEF(M)=\RUC_u(M)$.\end{prop}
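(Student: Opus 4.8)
The plan is to prove the two assertions separately, beginning with the inclusion $\DEF(M)\subset\RUC_u(M)$, which should hold for any metric structure, and then establishing the reverse inclusion under the hypothesis of $\aleph_0$-categoricity by appealing to the topological characterization of definable predicates recalled in the excerpt.

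For the inclusion $\DEF(M)\subset\RUC_u(M)$, I would take a formula $f(x,y)$ and a parameter $a\in M^\omega$ and show that $f_a\in\RUC_u(M)$. The function $f_a$ is bounded and uniformly continuous in $y$, since every definable predicate is uniformly continuous and bounded; this gives uniform continuity with respect to the metric on $M$, which is the $u$-part of the requirement. For the $\RUC$ part I must verify that the orbit map $g\mapsto g f_a$ is norm-continuous, i.e.\ that for every $\epsilon>0$ there is a neighborhood $U$ of the identity of $G=\Aut(M)$ with $|f_a(g^{-1}b)-f_a(b)|<\epsilon$ for all $b\in M$ and $g\in U$. Here I would use invariance of $f$ under the diagonal action: $f(ga,gb)=f(a,b)$, so $f_a(g^{-1}b)=f(a,g^{-1}b)=f(ga,b)=f_{ga}(b)$. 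Thus the question reduces to continuity of $g\mapsto f_{ga}$ in the uniform norm, and since $f$ is a uniformly continuous function of the pair $(x,y)$ jointly, moving $a$ a small amount (which is what $g$ near the identity does, by the topology of pointwise convergence on $\Aut(M)$ together with uniform continuity of $f$ in finitely many coordinates up to a tail that contributes arbitrarily little because of the weighting $2^{-i}$ in the metric on $M^\omega$) changes $f_{ga}$ uniformly little. This last point, controlling the infinite tuple $a$ by reducing to finitely many coordinates, is the step requiring the most care, but it is routine given the product metric.

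For the reverse inclusion under $\aleph_0$-categoricity, I would invoke the characterization stated in the excerpt: when $M$ is $\aleph_0$-categorical, a function $f\colon M^\alpha\to\RR$ is a definable predicate if and only if it is continuous and $\Aut(M)$-invariant. The strategy is to start with an arbitrary $h\in\RUC_u(M)$ and produce a formula $f(x,y)$ and a parameter $a$ with $h=f_a$. The natural candidate is to build a definable predicate in the variables $(x,y)$ from $h$; concretely, I would consider the two-variable function that, given a parameter tuple $x$ enumerating a dense set and an element $y$, recovers $h$. Since $h$ is $\RUC$, it comes from a $u$-compactification of $M$, and one expects $h$ to be expressible using the $\Aut(M)$-invariant data encoded by the structure. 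The cleanest route is to exhibit a continuous $\Aut(M)$-invariant function $f$ on $M^\omega\times M$ whose specialization at a suitable $a$ equals $h$; applying the cited characterization to $f$ then shows $f$ is a definable predicate, hence a formula, so that $h=f_a\in\DEF(M)$.

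The main obstacle will be this reverse inclusion, specifically constructing the invariant continuous two-variable function $f$ on $M^\omega\times M$ that reproduces a given $h\in\RUC_u(M)$ when a parameter is plugged in. The key idea I would pursue is to use a parameter $a$ that enumerates a countable dense subset of $M$, so that the $\Aut(M)$-orbit of $a$ is itself dense in the relevant orbit closure, and then define $f$ on the orbit of $a$ by $f(ga,gy)=h(y)$, checking that this is well-defined (using that $\RUC$ functions respect the relevant equivalence coming from the action) and uniformly continuous, so that it extends continuously and $\Aut(M)$-invariantly to all of $M^\omega\times M$. Verifying well-definedness and uniform continuity of this assignment is where the $\RUC_u$ hypothesis on $h$ and approximate oligomorphicity (equivalently, compactness of the quotients $M^\alpha\sslash\Aut(M)$) must be combined; the compactness furnished by $\aleph_0$-categoricity is precisely what guarantees that the continuous invariant extension exists and lands back in $\DEF(M)$.
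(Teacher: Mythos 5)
Your proposal follows essentially the same route as the paper: the first inclusion via $gf_a=f_{ga}$ together with a modulus of uniform continuity for $f(x,y)$ and the weighted product metric, and the reverse inclusion by setting $f(ga,b)=h(g^{-1}b)$ for a dense enumeration $a$ (well-defined since $ga$ determines $g$), verifying uniform continuity from the $\RUC_u$ hypothesis, extending invariantly, and invoking the characterization of definable predicates as continuous $\Aut(M)$-invariant functions. The one step you leave implicit --- passing from the closed invariant set $[a]\times M$ to all of $M^\omega\times M$ --- is handled in the paper by applying Tietze's extension theorem in the metric quotient $(M^\omega\times M)\sslash G$, not by compactness.
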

\begin{proof} For the first part consider a formula $f(x,y)$ together with a parameter $a\in M^\omega$. Take a neighborhood $U$ of the identity such that $d(a,ga)<\Delta_f(\epsilon)$ for $g\in U$, where $\Delta_f$ is a modulus of uniform continuity for $f(x,y)$. Thus $\|gf_a-f_a\|=\|f_{ga}-f_a\|<\epsilon$ whenever $g\in U$. This shows that every $f_a\in\DEF(M)$ is in $\RUC_u(M)$.

Now let $h\in\RUC_u(M)$, and set $a\in M^\omega$ to enumerate a dense subset of $M$. We define $f\colon Ga\times M\to\RR$ by $$f(ga,b)=gh(b)=h(g^{-1}b).$$ This is well defined because $a$ is dense in $M$; note also that $f$ is $G$-invariant and uniformly continuous. Indeed, we have $$|f(ga,b)-f(g'a,b')|\leq |gh(b)-gh(b')|+|gh(b')-g'h(b')|.$$ The first term on the right side is small if $b$ and $b'$ are close: simply observe that $d(g^{-1}b,g^{-1}b')=d(b,b')$, so we use the uniform continuity of $h$. For the second, given $\epsilon>0$ there is a neighborhood $U$ of the identity of $G$ such that $\|gh-g'h\|<\epsilon$ whenever $g^{-1}g'\in U$, because $h$ is $\RUC$; since $a$ is dense, there is $\delta>0$ such that $d(ga,g'a)<\delta$ implies $g^{-1}g'\in U$; thus if $d(ga,g'a)<\delta$ we have $|gh(b')-g'h(b')|<\epsilon$.

This means that $f$ can be extended continuously to $[a]\times M$ (we recall the notation $[a]=\overline{Ga}$). The extension remains $G$-invariant, so we may regard $f$ as defined on $([a]\times M)\sslash G$, which is a closed subset of the metric space $(M^\omega\times M)\sslash G$. Then we can apply Tietze extension theorem to get a continuous extension to $(M^\omega\times M)\sslash G$. Composing with the projection we get a $G$-invariant continuous function $$f\colon M^\omega\times M\to\RR.$$ Finally, if $M$ is $\aleph_0$-categorical then the $G$-invariant continuous function $f$ is in fact a formula $f(x,y)$. Hence we have $h=f_a$, as desired.\end{proof}

In light of this result, if $M$ is $\aleph_0$-categorical, we can attempt to study the subalgebras of $\RUC_u(M)$ with model-theoretic tools; this is our aim.

Our conclusions will translate easily from structures to groups, the latter being the main subject of interest from the topological viewpoint. Indeed, if $G$ is a Polish group, there is a canonical construction (first described by J. Melleray in \cite[\textsection 3]{mel10}) that renders the left completion $M=\widehat{G}_L$ a metric first-order structure with automorphism group $\Aut(M)=G$. It suffices to take for $I$ the set of all closed orbits in all finite powers of $\widehat{G}_L$, that is $I=\bigsqcup_{n<\omega}M^n\sslash G$, then define the basic predicates $P_i\colon M^{n_i}\to\RR$ (if $i\in M^{n_i}\sslash G$) as the distance functions to the corresponding orbits: $P_i(y)=\inf_{x\in i}d(x,y)$. By Theorem \ref{Roelcke bentsa}, if $G$ is Roelcke precompact then $G$ acts approximately oligomorphically on its left completion and hence $M$ is an $\aleph_0$-categorical structure. In addition we have the natural norm-preserving $G$-isomorphism $\RUC_u(\widehat{G}_L)\simeq\UC(G)$. By this means, our conclusions about the dynamics of $\aleph_0$-categorical structures will carry immediately to Roelcke precompact Polish groups.

Nevertheless, for the analysis of the examples done in Section~\ref{Examples} we shall use the approach initiated in \cite[\textsection 5--6]{bentsa} for the study of $\WAP(G)$. That is, we will describe the functions on $G$ in terms of the formulas of the ``natural'' structure $M$ for which $G=\Aut(M)$ (see particularly Lemma 5.1 of the referred paper). To this end we have the following version of Proposition \ref{Form=RUC}.

\begin{prop}\label{Form=UC} Let $M$ be a metric structure, $G=\Aut(M)$. If $f(x,y)$ is an arbitrary formula and $a,b$ are tuples from $M$ of the appropriate length, then the function $g\mapsto f(a,gb)$ is in $\UC(G)$. If $M$ is $\aleph_0$-categorical and $h\in\UC(G)$, then there are a formula $f(x,y)$ in $\omega$-variables $x,y$ and a parameter $a\in M^\omega$ such that $h(g)=f(a,ga)$ for every $g\in G$.\end{prop}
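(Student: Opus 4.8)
The plan is to treat the two assertions separately: the first by exploiting the invariance of formulas together with the fact that $G$ acts by isometries, and the second by replaying the construction in the proof of Proposition \ref{Form=RUC}, now carried out on $G$ itself and using both halves of Roelcke continuity.

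For the first assertion, I set $\varphi(g)=f(a,gb)$ and recall that, being a definable predicate, $f$ is invariant under the diagonal action, so that one also has $\varphi(g)=f(g^{-1}a,b)$. I would verify the two halves of Roelcke continuity using whichever expression keeps the perturbation on a single side. For left uniform continuity I use $\varphi(g)=f(a,gb)$ and estimate $|\varphi(hg)-\varphi(h)|=|f(a,hgb)-f(a,hb)|$; since $h$ is an isometry, $d(hgb,hb)=d(gb,b)$, which falls below the threshold of a modulus of uniform continuity for $f$ once $g$ lies in a suitable neighbourhood of the identity, uniformly in $h$. For right uniform continuity I instead use $\varphi(g)=f(g^{-1}a,b)$ and estimate $|\varphi(g^{-1}h)-\varphi(h)|=|f(h^{-1}ga,b)-f(h^{-1}a,b)|$, where $d(h^{-1}ga,h^{-1}a)=d(ga,a)$ is again small for $g$ near the identity, uniformly in $h$. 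Hence $\varphi\in\UC(G)$.

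For the converse, fix $a\in M^\omega$ enumerating a dense subset of $M$. Because the automorphisms act by isometries (hence equicontinuously) and $a$ is dense, the orbit map $g\mapsto ga$ is a topological embedding of $G$ into $M^\omega$ with image $Ga$; in particular $g$ is recovered from $ga$, and for every neighbourhood $V$ of the identity there is $\delta>0$ with $d(sa,a)<\delta\Rightarrow s\in V$. I then define $\hat f$ on $Ga\times Ga$ by $\hat f(g_1a,g_2a)=h(g_1^{-1}g_2)$, which is well defined by injectivity of the orbit map and is $G$-invariant since $h\big((gg_1)^{-1}(gg_2)\big)=h(g_1^{-1}g_2)$; note already that $\hat f(a,ga)=h(g)$.

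The crux is the uniform continuity of $\hat f$ on $Ga\times Ga$, and this is precisely where both halves of the hypothesis $h\in\UC(G)$ enter. Splitting $|h(g_1^{-1}g_2)-h(g_1'^{-1}g_2')|$ through the intermediate term $h(g_1^{-1}g_2')$, I would control the variation in the second variable by left uniform continuity of $h$ (the perturbation $g_2^{-1}g_2'$ lies near the identity exactly when $d(g_2a,g_2'a)=d(a,g_2^{-1}g_2'a)$ is small, via the embedding property), and the variation in the first variable by right uniform continuity of $h$ (the perturbation $g_1'^{-1}g_1$ being near the identity when $d(g_1a,g_1'a)=d(g_1'^{-1}g_1\,a,a)$ is small). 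Once $\hat f$ is uniformly continuous, I extend it by uniform continuity to the closure $[a]\times[a]$, pass to the quotient $([a]\times[a])\sslash G$, which sits as a closed subset of the compact metric space $(M^\omega\times M^\omega)\sslash G$ (compact by approximate oligomorphy), and apply Tietze's theorem followed by composition with the projection, exactly as in Proposition \ref{Form=RUC}. This yields a continuous $G$-invariant $f\colon M^\omega\times M^\omega\to\RR$, hence a formula $f(x,y)$ by $\aleph_0$-categoricity, with $h(g)=\hat f(a,ga)=f(a,ga)$. I expect the only delicate point to be the passage from ``$g_ia$ close in $M^\omega$'' to ``the corresponding group elements are close in $G$'', that is, the one-sided uniform control furnished by the embedding property of the dense orbit; the remainder follows the template already established.
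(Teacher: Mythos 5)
Your proof is correct and follows the route the paper itself indicates first — namely, a direct adaptation of the proof of Proposition \ref{Form=RUC}, with the function $\hat f(g_1a,g_2a)=h(g_1^{-1}g_2)$ on the dense orbit, uniform continuity obtained by splitting through $h(g_1^{-1}g_2')$ and using left and right uniform continuity of $h$ for the two variables, and the same Tietze/quotient argument; the only difference is that the paper then elaborates an alternative shortcut via the identification of $[a]$ with $\widehat{G}_L$ and the chain $\UC(G)\simeq\RUC_u(\widehat{G}_L)=\DEF(\widehat{G}_L)$. Your handling of the delicate point (recovering closeness of group elements in the left uniformity from closeness of $g_1a,g_1'a$ and $g_2a,g_2'a$ via density of $a$) is exactly the mechanism the paper uses in Proposition \ref{Form=RUC}, so nothing is missing.
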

\begin{proof} The proof of Proposition \ref{Form=RUC} can be adapted readily. Alternatively, we remark that if $a\in M^\omega$ enumerates a dense subset of $M$ then $[a]$ can be identified with $\widehat{G}_L$ (see \cite{bentsa}, Lemma 2.3). Thus the basic predicates $P_i\colon (\widehat{G}_L)^{n_i}\to\RR$ defined above are simply the restrictions to $[a]^{n_i}$ of the functions $f_i(y)=\inf_{x\in i}d(x,y)\colon (M^\omega)^{n_i}\to\RR$, which are definable predicates if $M$ is $\aleph_0$-categorical; similarly for the general definable predicates on $\widehat{G}_L$. The second claim in the statement then follows from this together with the identifications $\UC(G)\simeq\RUC_u(\widehat{G}_L)=\DEF(\widehat{G}_L)$.\end{proof}

\subsection{Types, extensions, indiscernibles} Before we go on, we recall some additional terminology from model theory that we use in our expositions and proofs. Most of it could be avoided if we decided to give a prevailingly topological presentation of our results, but we have chosen to emphasize the interplay between the two domains.

Let $M$ be a metric structure, $A\subset M$ a subset and let $y$ be a variable of length $\beta$. A \emph{(complete) type over $A$ (in $M$) in the variable $y$} can be defined as a maximal ideal of the uniformly closed algebra of $A$-definable predicates of $M$ in the variable $y$. The type over $A$ of an element $b\in M^\beta$ is defined by $$\tp(b/A)=\{f_a:a\in A^\omega,f(x,y)\text{ a formula with }f(a,b)=0\}.$$ For $A=\emptyset$ we denote $\tp(b/\emptyset)=\tp(b)$. A more model-theoretic presentation of types in continuous logic is given in \cite[\textsection 8]{bbhu08} or in \cite[\textsection 3]{benusv10}; there, a type is a set of \emph{conditions} which an element may eventually satisfy. A type $p$ given as an ideal is identified with the set of conditions of the form $h(y)=0$ for $h\in p$.

The space of types over $A$ (that is, the maximal ideal space of the algebra of $A$-definable predicates, with its natural topology) is denoted by $S^M_y(A)$, or by $S(A)$ when $\beta=1$ and the structure is clear from the context. If $A$ is $G$-invariant, then the algebra of $A$-definable predicates is $G$-invariant and there is a natural action of $G$ on $S^M_y(A)$. Thus, for example, the type space $S(M)$ (together with the natural map $\tp\colon M\to S(M)$) is just the compactification $M^{\DEF(M)}$. In particular, if $G$ is Roelcke precompact, then by Proposition \ref{Form=RUC}, Remark \ref{compactifications of G} and the discussion about the structure $\widehat{G}_L$ above, we have that $S(\widehat{G}_L)=G^{\UC}$ is just the Roelcke compactification of $G$.

\begin{rem}\label{X_f} Let $f=f(x,y)$ be an arbitrary formula and let $a\in M^\alpha$ be a parameter. The cyclic $G$-space of $f_a$ (as defined after Remark \ref{compactifications of G}) also has a name in the model-theoretic literature, at least for some authors: it coincides with the space of \emph{$f$-types} over the orbit $Ga$ as defined in \cite[p.~132]{tenzie}. Their definition is in the classical setting, but we can adapt it to the metric case by defining a (complete) $f$-type over $A\subset M^\alpha$ to be a maximal consistent set of conditions of the form $f(a',y)=r$ for $a'\in A$ and $r\in\RR$. In other words, an $f$-type is a maximal ideal of the closed unital algebra generated by $\{f_{a'}:a'\in A\}$. The space of $f$-types over $A$ is denoted by $S_f(A)$, and the identification $S_f(Ga)=X_{f_a}$ follows.

N.B. This does not coincide in general with the space $S_f(A)$ as defined in \cite{benusv10}, Definition~6.6, or in \cite[p.~14]{pil96}. To make the comparison simpler, say $A=B^\alpha$ for some $B\subset M$. The two definitions agree when $B=M$. In the case $B\subset M$, the latter authors define $S_f(A)$ (or $S_f(B)$ in their notation) as the maximal ideal space of the algebra of $B$-definable predicates in $M$ that come from the compactification $S_f(M)$. This is larger than the one defined above, and it fits better for the study of local stability.

We shall understand $S_f(A)$ in the former sense (except in Lemma \ref{stable implies definable extensions}).\end{rem}

A tuple $b\in M^\beta$ \emph{realizes} a type $p\in S^M_y(A)$ if we have $\tp(b/A)=p$. A set $q$ of $M$-definable predicates in the variable $y$ is \emph{approximately finitely realized in $B\subset M$} if for every $\epsilon>0$ and every finite set of predicates $f_i\in q$, $i<k$, there is $b\in B^\beta$ such that $|f_i(b)|<\epsilon$ for each $i<k$. Remark that any $p\in S^M_y(M)$ is approximately finitely realized in $M$: if for example $f_a\in p$ is bounded away from zero in $M$, then $1/f_a$ is an $A$-definable predicate, hence $1=1/f_a\cdot f_a\in p$ and $p$ is not a proper ideal. Conversely, by Zorn's Lemma, any set of $A$-predicates in $y$ approximately finitely realized in $M$ can be extended to a type $p\in S^M_y(A)$.

The following terminology is not standard, so we single it out.

\begin{defin} We will say that a structure $M$ is \emph{$\emptyset$-saturated} if every type $p\in S^M_y(\emptyset)$ in any countable variable $y$ is realized in $M$.\end{defin}

Suppose $M$ is $\aleph_0$-categorical. Then the projection $M^\beta\to M^\beta\sslash G$ is a compactification, and the functions that come from it are precisely the continuous $G$-invariant ones, i.e.\ the $\emptyset$-definable predicates. Hence the projection to $M^\beta\sslash G$ can be identified with the compactification $\tp\colon M^\beta\to S^M_y(\emptyset)$. A first consequence of this identification is the following homogeneity property: if $\tp(a)=\tp(b)$ for $a,b\in M^\beta$ and we have $\epsilon>0$, then there is $g\in G$ with $d(a,gb)<\epsilon$. A further consequence is the following.

\begin{fact} Every $\aleph_0$-categorical structure is $\emptyset$-saturated.\end{fact}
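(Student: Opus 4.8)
The plan is to read the statement off directly from the identification recorded just above it, the only substantive input being the compactness supplied by $\aleph_0$-categoricity. Let $M$ be $\aleph_0$-categorical, $G=\Aut(M)$, and let $y$ be a variable of countable length $\beta\le\omega$. Fix a type $p\in S^M_y(\emptyset)$; I must produce $b\in M^\beta$ with $\tp(b)=p$.

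First I recall that $\aleph_0$-categoricity means exactly that the action of $G$ on $M$ is approximately oligomorphic, so that the metric quotient $M^\beta\sslash G$ is compact for every countable $\beta$ (the case $\beta=\omega$ being equivalent to all finite ones). Thus $M^\beta\sslash G$ is a compact metric $G$-space, and the quotient map $q\colon M^\beta\to M^\beta\sslash G$, $q(x)=[x]$, is by construction \emph{surjective} onto it: every point of $M^\beta\sslash G$ is a closed orbit of some tuple. Next I invoke the identification noted before the statement: since for $\aleph_0$-categorical $M$ the $\emptyset$-definable predicates in $y$ are precisely the continuous $G$-invariant functions on $M^\beta$, the compactification $\tp\colon M^\beta\to S^M_y(\emptyset)$ is canonically $G$-homeomorphic to $q$. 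Under this identification $\tp$ inherits the surjectivity of $q$: its image, which for a general compactification would only be dense, is here all of the compact space $S^M_y(\emptyset)$.

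Finally I conclude. Surjectivity of $\tp$ yields $b\in M^\beta$ with $\tp(b)=p$; unwinding the definition of $\tp(b)$ as the ideal of $\emptyset$-definable predicates $f$ with $f(b)=0$, this is precisely the assertion that $b$ realizes the maximal ideal $p$. As $p$ and $\beta$ were arbitrary, $M$ is $\emptyset$-saturated. I do not expect any real obstacle: the single non-formal ingredient is the compactness of $M^\beta\sslash G$, i.e. approximate oligomorphy, and everything else is bookkeeping within the compactification correspondence. A reader preferring a self-contained argument could instead observe that a maximal ideal $p$ is approximately finitely realized in $M$ (if $f_0,\dots,f_{k-1}\in p$ then $\sum_i f_i^2\in p$ cannot be bounded away from $0$, lest its inverse witness $1\in p$), and then extract a realizing tuple from a convergent subnet of the approximating orbits in the compact space $M^\beta\sslash G$; but this is merely the same fact in a different guise.
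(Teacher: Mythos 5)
Your proposal is correct and follows exactly the route the paper intends: the Fact is stated as a "further consequence" of the identification of the compactification $\tp\colon M^\beta\to S^M_y(\emptyset)$ with the (surjective) metric quotient map $M^\beta\to M^\beta\sslash G$, which is precisely the identification you invoke and then unwind. Your closing remark about approximate finite realizability plus compactness of $M^\beta\sslash G$ is, as you say, the same argument in different clothing.
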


A stronger saturation property is true for $\aleph_0$-categorical structures (they are \emph{approximately $\aleph_0$-saturated}, see Definition 1.3 in \cite{benusv-d-finiteness}), but we will not use it.

\begin{rem}\label{GL is not saturated} The left completion $M=\widehat{G}_L$, when seen as a metric structure as defined before, is $\emptyset$-saturated if and only if it is $\aleph_0$-categorical. Indeed, if it is not $\aleph_0$-categorical then the quotient $M^n\sslash G$ is not compact for some $n<\omega$, which means that there are $\epsilon>0$ and a sequence of orbits $(i_k)_{k<\omega}\subset M^n\sslash G$ any two of which are at distance at least $\epsilon$. We may moreover assume that $(i_k)_{k<\omega}$ is maximal such, since $M^n$ is separable. If, as before, $P_i\colon M^n\to\RR$ denotes the distance to the orbit $i\in M^n\sslash G$, then the conditions $\{P_{i_k}(y)\geq\epsilon\}_{k<\omega}$ induce a type over $\emptyset$ not realized in $M$.\end{rem}

Now suppose that we have a metric structure $M$ given by the basic predicates $f_i\colon M^{n_i}\to\RR$, $i\in I$. An \emph{elementary extension} of $M$ is a structure $N$ with basic predicates $\tilde{f_i}\colon N^{n_i}\to\RR$, $i\in I$, such that: (i) $M$ is a metric subspace of $N$, (ii) each $\tilde{f_i}$ extends $f_i$, and (iii) every type $p\in S^N_y(M)$ is approximately finitely realized in $M\subset N$. One can deduce that the $M$-definable predicates of $M$ are exactly the restrictions to $M$ of the $M$-definable predicates of $N$ (essentially, because (iii) ensures that approximate quantification over $M$ and over $N$ coincide), and the restriction is one-to-one. Hence, the spaces $S^M_y(M)$ and $S^N_y(M)$ can be identified. A metric ultrapower construction as in \cite[\textsection 5]{bbhu08} can be used to prove the following.

\begin{fact} Every metric structure $M$ admits an elementary extension $N$ such that every type in $S_y(M)$ in any countable variable $y$ is realized in $N$. (In particular, every structure has a $\emptyset$-saturated elementary extension.)\end{fact}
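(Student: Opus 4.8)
The plan is to realize all the required types simultaneously in a single, sufficiently saturated ultrapower of $M$, using the metric ultraproduct machinery of \cite[\textsection 5]{bbhu08}. The point of departure is the remark already recorded above, that every $p\in S^M_y(M)$ is approximately finitely realized in $M$; so realizing $p$ is never a matter of consistency but only of saturation, and the whole task reduces to producing one elementary extension that is saturated enough.

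Concretely, put $\kappa=|M|+\aleph_0$ and fix a countably incomplete ultrafilter $\mathcal{U}$ on an index set of cardinality $\kappa$ which is good in the sense of Keisler, so that ultrapowers by $\mathcal{U}$ are $\kappa^+$-saturated. Let $N=\prod_{\mathcal{U}}M$ be the metric ultrapower, together with the diagonal embedding of $M$ into $N$. By {\L}o\'s's theorem for metric structures, each basic predicate $\tilde{f_i}$ of $N$ extends $f_i$, and for every $M$-definable predicate $g$ one has $\inf_N g=\inf_M g$ and $\sup_N g=\sup_M g$. This yields at once conditions (i)--(iii) in the definition of elementary extension: (i) and (ii) are built into the construction, while (iii) follows because a type $p\in S^N_y(M)$ is approximately finitely realized in $N$ (by the maximal-ideal argument of the remark, applied in $N$), and the vanishing of $\inf_N\max_{i<k}|f_{a_i}|$ for finite subsets of $p$ transfers to $\inf_M\max_{i<k}|f_{a_i}|$ by the equality of infima. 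Hence $M$ is an elementary substructure of $N$.

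It remains to check that $N$ realizes every $p\in S^M_y(M)$. Since $|M|\le\kappa$ and $N$ is $\kappa^+$-saturated, every type over $M$ in finitely many variables is realized in $N$; a type in a countable variable $y$ is then obtained by adjoining the coordinates one at a time and iterating $\omega$ steps, the parameter set staying of size $\le\kappa$ throughout. The parenthetical assertion is immediate: any $q\in S^M_y(\emptyset)$ extends (by Zorn, using approximate finite realizability in $M$) to some $p\in S^M_y(M)$, and any realization of $p$ realizes $q$.

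The one genuinely delicate ingredient is the saturation over the whole of $M$, which may be uncountable: a bare countably incomplete ultrapower is only $\aleph_1$-saturated, so for general $M$ one must appeal to the existence of good ultrafilters. For separable $M$ --- the only case needed in the sequel --- this can be bypassed, since by uniform continuity of definable predicates the types over $M$ coincide with those over a countable dense subset, and an $\aleph_1$-saturated ultrapower already suffices. Alternatively, one avoids good ultrafilters altogether by realizing types one at a time (each single type being realized in an elementary extension through the compactness theorem applied to the elementary diagram of $M$) and taking the union of an elementary chain indexed by an enumeration of all types over $M$ in all countable variables, invoking that unions of elementary chains are elementary.
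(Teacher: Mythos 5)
Your proof is correct and follows exactly the route the paper indicates, which simply cites the metric ultrapower construction of \cite[\textsection 5]{bbhu08} without further detail. You have filled in that construction correctly, including the genuine subtleties (the need for good ultrafilters or an elementary chain when $M$ is nonseparable, and the reduction of countable-variable types to the finite-variable case), so there is nothing to object to.
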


Thus, for most purposes, we can refer to types over $M$ or to elements in elementary extensions of $M$ interchangeably. For example, if $p\in S(M)$, $f_a\in\DEF(M)$, and $b$ is an element in an elementary extension $N$ of $M$ realizing $p$, we may prefer to write $f(a,b)$ instead of $\tilde{f}_a(p)$. We recall that the formula $f(x,y)$ of $M$ extends uniquely to a formula of $N$, and we identify them.

An \emph{indiscernible sequence} in a structure $M$ is a sequence $(a_i)_{i<\omega}\subset M^\beta$ such that, for any $i_1<\dots<i_k<\omega$, we have $\tp(a_{i_1}\dots a_{i_k})=\tp(a_1\dots a_k)$. In a finitary version, if $\Delta$ is a finte set of definable predicates and $\delta$ is a positive real, then a sequence $(a_i)_{i<\omega}$ is \emph{$\Delta$-$\delta$-indiscernible} if $|\phi(a_{i_1},\dots,a_{i_k})-\phi(a_{j_1},\dots,a_{j_k})|\leq\delta$ for every $i_1<\dots<i_k$, $j_1<\dots<j_k$ and every definable predicate $\phi(y_1,\dots,y_k)\in\Delta$.

Finally, we shall say that a subset $A\subset M^\alpha$ is \emph{type-definable} if it is of the form $\{a\in M^\alpha:f_j(a)=0\text{ for all }j\in J\}$ for a family of definable predicates $f_j(x)$, $j\in J$. In particular, every type-definable set is $G$-invariant and closed. If $M$ is $\aleph_0$-categorical, then any $G$-invariant closed set is type-definable, even by a single predicate, namely the (continuous $G$-invariant) distance function $P_A(x)=d(x,A)$. In general, $A$ is called \emph{definable} precisely when the distance function $P_A$ is a definable predicate. In the latter case, if $f(x,y)$ is any formula, then $F(y)=\sup_{x\in A}f(x,y)$ is a definable predicate too, and similarly for the infimum (see \cite{bbhu08}, Theorem 9.17). If $A$ is definable and $N$ is an elementary extension of $M$, $P_A$ will denote the definable predicate that coincides with $d(x,A)$ on $M^\alpha$; thus an element $a\in N^\alpha$ satisfying $P_A(a)=0$ need not be in $A$.

\subsection{Almost periodic functions} We end this section with some comments about the smallest function algebra presented in the introduction.
A continuous bounded function $h$ on a metric $G$-space $X$ is \emph{almost periodic ($\AP$)} if the orbit $Gh$ is a precompact subset of $\CC(X)$ (with respect to the topology of the norm). As is easy to check, the family $\AP(X)$ of almost periodic functions on $X$ is a uniformly closed $G$-invariant subalgebra of $\RUC(X)$. Moreover, if $h$ comes from a compactification $\nu\colon X\to Y$, it is clear that $h$ is $\AP$ if and only if its extension to $Y$ is $\AP$. By the Arzel\`a--Ascoli theorem, we have that $h\in\AP(Y)$ if and only if for every $\epsilon>0$ and $y\in Y$ there is an open neighborhood $O$ of $y$ such that $$|h(gy)-h(gy')|<\epsilon$$ for every $y'\in O$ and $g\in G$. From the point of view of Banach space representations, almost periodic functions are precisely those coming from Euclidean-approximable compactifications of $X$; see \cite[\textsection 5.2]{glameg13} and \cite{meg08}, Proposition 3.7.2.

The definition given in the following proposition will be useful for the description of $\AP$ functions in the examples of Section \ref{Examples}. (The terminology is not standard.)

\begin{prop}\label{AP formulas} Let $M$ be a $\emptyset$-saturated structure. Let $f(x,y)$ be a formula and $A\subset M^\alpha$, $B\subset M^\beta$ be definable sets. The following are equivalent, and in any of these cases we will say that $f(x,y)$ is \emph{algebraic on $A\times B$}.
\begin{enumerate}
\item the set $\{f_a|_B:a\in A\}$ is precompact in $\CC(B)$;
\item for every indiscernible sequence $(a_i)_{i<\omega}\subset A$, the predicates $f(a_i,y)$ are all equivalent in $B$, i.e.\ we have $f(a_i,b)=f(a_j,b)$ for all $i,j$ and $b\in B$.
\end{enumerate}
\end{prop}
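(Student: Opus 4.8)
The plan is to prove each implication by contraposition, organising everything around the single definable predicate
$$D(x,x')=\sup_{y\in B}\,\bigl|f(x,y)-f(x',y)\bigr|,$$
in the variables $x,x'$ of length $\alpha$. This $D$ is indeed a definable predicate: $B$ is definable and $|f(x,y)-f(x',y)|$ is a formula, so the supremum over $y\in B$ is a definable predicate by the fact recalled above on suprema over definable sets (cf.\ \cite{bbhu08}, Theorem 9.17). The point of introducing $D$ is that for $a,a'\in A$ we have the identity $D(a,a')=\|f_a|_B-f_{a'}|_B\|$, the sup-norm distance in $\CC(B)$; it is also symmetric in its two arguments.

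For the implication $(1)\Rightarrow(2)$ I would argue by contradiction and this direction uses no saturation. Suppose $(a_i)_{i<\omega}\subset A$ is indiscernible but the predicates $f(a_i,y)$ are not all equal on $B$, so that $D(a_i,a_j)>0$ for some pair. Since $D$ is a definable predicate, its value on a pair is determined by the type of that pair, and indiscernibility forces $D(a_i,a_j)$ to equal a constant $c>0$ for all $i<j$, hence (by symmetry) for all $i\neq j$. Then $(f_{a_i}|_B)_{i<\omega}$ is an infinite $c$-separated subset of $\CC(B)$, contradicting the precompactness asserted in $(1)$.

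For $(2)\Rightarrow(1)$, assume $(1)$ fails: the set $\{f_a|_B:a\in A\}$ is not precompact in the Banach space $\CC(B)$, hence not totally bounded, so there are $\delta>0$ and a sequence $(a_i)_{i<\omega}\subset A$ with $D(a_i,a_j)\geq\delta$ for all $i\neq j$. I would now invoke the standard construction of indiscernible sequences (Ramsey's theorem together with compactness of the type space $S^M_y(\emptyset)$, in the countable variable $y=(y_i)_{i<\omega}$ with each $y_i$ of length $\alpha$): it yields a consistent set of conditions over $\emptyset$ describing an indiscernible sequence all of whose finite increasing configurations are approximately realised by increasing tuples of the $a_i$. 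Applying this to the formula $D$ and to the distance predicate $P_A$ of $A$, the resulting type $p\in S^M_y(\emptyset)$ contains the conditions $D(y_i,y_j)\geq\delta$ (for $i<j$) and $P_A(y_i)=0$, since each finite conjunction of them is approximately satisfied in $A$ by the original sequence.

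Finally, because $M$ is $\emptyset$-saturated, the type $p$ is realised by some sequence $(a_i')_{i<\omega}\subset M^\alpha$. By construction $(a_i')$ is indiscernible (indiscernibility is encoded in its type over $\emptyset$), it lies in $A$ (as $P_A(a_i')=0$), and it satisfies $D(a_i',a_j')\geq\delta$ for $i\neq j$; the latter means $\sup_{y\in B}|f(a_i',y)-f(a_j',y)|\geq\delta$, so for each such pair there is $b\in B$ with $f(a_i',b)\neq f(a_j',b)$. Thus the predicates $f(a_i',y)$ are not all equal on $B$, contradicting $(2)$. I expect the main obstacle to be exactly this extraction-and-transfer step: one must run the continuous-logic indiscernible construction so that the separation witnessed by $D$ survives passage to the limit, and then use $\emptyset$-saturation to relocate the resulting abstract indiscernible sequence inside $A\subset M^\alpha$, which is what the wording of clause $(2)$ literally demands.
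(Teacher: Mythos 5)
Your proof is correct and rests on the same essential machinery as the paper's: the definability of $\sup_{y\in B}$ over the definable set $B$, Ramsey's theorem to extract approximately indiscernible subsequences, and $\emptyset$-saturation to realize the resulting type inside $A\subset M^\alpha$. The only difference is organizational: you run $(2)\Rightarrow(1)$ contrapositively, converting a $\delta$-separated sequence (via the predicate $D$) into a $\delta$-separated indiscernible sequence in $A$, whereas the paper argues directly, finitizing condition $(2)$ into $\Delta_n$-$\delta_n$-indiscernibility requirements and diagonalizing to extract a Cauchy subsequence from an arbitrary sequence.
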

\begin{proof}
$(1)\Rightarrow(2)$. By precompactness, the sequence $(f_{a_i})_{i<\omega}$ has a Cauchy subsequence, so in particular there are $i$ and $j$ such that $\sup_{y\in B}|f(a_i,y)-f(a_j,y)|\leq\epsilon$. By indiscernibility, this is true for all $i,j$, and the claim follows.

$(2)\Rightarrow(1)$. Let $\epsilon>0$. If the set of conditions in the variables $(x_i)_{i<\omega}$ given by $$|\phi(x_{i_1},\dots,x_{i_k})-\phi(x_{j_1},\dots,x_{j_k})|=0,\ P_A(x_i)=0,\ \sup_{y\in B}|f(x_i,y)-f(x_j,y)|\geq\epsilon$$ (where $\phi$ varies over the definable predicates of $M$, $i_1<\dots<i_k$, $j_1<\dots<j_k$), was approximately finitely realized in $M$, then by $\emptyset$-saturation we could get an indiscernible sequence in $M$ contradicting (2). Therefore, there are a finite set $\Delta$ of definable predicates and $\delta>0$ such that any $\Delta$-$\delta$-indiscernible sequence $(a_i)_{i<\omega}\subset A$ satisfies $\sup_{y\in B}|f(a_i,y)-f(a_j,y)|<\epsilon$ for all $i,j$.

For every $n<\omega$ let $\Delta_n$, $\delta_n$ correspond to $\epsilon=1/n$ as before. Starting with an arbitrary sequence $(a_i)_{i<\omega}\subset A$, by Ramsey's theorem we can extract a $\Delta_1$-$\delta_1$-indiscernible subsequence, say $(a^1_i)_{i<\omega}$. Inductively, let $(a^{n+1}_i)_{i<\omega}$ be a $\Delta_{n+1}$-$\delta_{n+1}$-indiscernible subsequence of $(a^n_i)_{i<\omega}$. If we take $a^\omega_j=a_j^j$ then $(a^\omega_j)_{j<\omega}$ is a subsequence of $(a_i)_{i<\omega}$ and $(f_{a^\omega_j})_{j<\omega}$ is a Cauchy sequence in $\CC(B)$.
\end{proof}

\begin{rem} As the reader can check, the previous proposition holds true if $A$ and $B$ are merely type-definable. In particular, one may consider the case where $A=\{a'\in M^\alpha:\tp(a')=\tp(a)\}$ for some $a\in M^\alpha$, and $B=M^\beta$. If the above equivalent conditions hold in this case (for a saturated model $M$), it is standard terminology to say that (the \emph{canonical parameter} of) $f_a$ is \emph{algebraic over the empty set}, in symbols $f_a\in\acl(\emptyset)$. Alternatively, in the terminology of Pillay \cite[p.~9]{pil96}, $f_a$ is \emph{almost $\emptyset$-definable}.

Set $\beta=1$. Suppose $M$ is $\aleph_0$-categorical, so in particular $A=[a]=\overline{Ga}$. Now, since $f(x,y)$ is uniformly continuous, the families $Gf_a$ and $\{f_{a'}:a'\in [a]\}$ have the same closure in $\CC(M)$. We can conclude by Proposition \ref{Form=RUC} that $h\in\AP_u(M)$ if and only if $h=f_a$ for some predicate $f_a\in\acl(\emptyset)$.\end{rem}

The compactification $b\colon G\to bG=G^{\AP}$ associated to the algebra $\AP(G)$ is the \emph{Bohr compactification} of~$G$. The space $bG$ has the structure of a (compact) group making $b$ a homomorphism (see \cite[(D.12)3 and IV(6.15)3]{vriesElements}). In fact, the compactification $b$ is the \emph{universal group compactification} of $G$: if $\nu\colon G\to K$ is a compactification and also a homomorphism into a compact group $K$, it is easy to see that $\AA_\nu\subset\AP(G)$, whence $\nu$ factors through $b$. I. Ben Yaacov has observed the following fact.

\begin{theorem} The Bohr compactification $b\colon G\to bG$ of a Roelcke precompact Polish group is always surjective.\end{theorem}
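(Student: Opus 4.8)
The plan is to pass to the model-theoretic side and identify the Bohr compactification with a compact group of partial symmetries of the algebraic closure of $\emptyset$, for which surjectivity becomes a matter of homogeneity. I take $M=\widehat{G}_L$, which is $\aleph_0$-categorical by Theorem \ref{Roelcke bentsa}, so that $G=\Aut(M)$ and $\UC(G)\simeq\DEF(M)$. By Proposition \ref{Form=UC}, together with the description of $\AP$ following Proposition \ref{AP formulas}, the algebra $\AP(G)$ corresponds under this isomorphism exactly to the predicates $f_a$ that are algebraic over $\emptyset$, i.e.\ those for which the orbit $\{f_{a'}:a'\in[a]\}$ is precompact in $\CC(M)$. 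Since $\AP(G)\subset\UC(G)$, the Bohr compactification factors as $G\xrightarrow{\nu}G^{\UC}=S(M)\xrightarrow{\pi}bG$, and it suffices to show that $\pi\nu=b$ is onto.

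For each such algebraic formula, the isometric $G$-action $g\cdot f_{a'}=f_{ga'}$ on the compact metric space $Y_f=\overline{\{f_{a'}:a'\in[a]\}}\subset\CC(M)$ induces a continuous homomorphism $\rho_f\colon G\to\Iso(Y_f)$, and $K_f=\overline{\rho_f(G)}$ is a compact group in which the image of $G$ is dense. I would first check that $bG$ is the inverse limit of the $K_f$ (the algebraic $f_a$ generate $\AP(G)$), equivalently that $bG$ is the compact group $\Aut(\acl^{eq}(\emptyset))$ of automorphisms of the induced $\emptyset$-definable structure on the algebraic closure, with $b$ the restriction map. Compactness of $bG$ here reflects precisely the precompactness of the algebraic orbits.

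The heart of the argument is to realise an arbitrary $\xi\in bG$ by a genuine automorphism. I regard $\xi$ as a coherent family $(\xi_f)\in\prod_f K_f$, equivalently as a $\emptyset$-elementary permutation $\sigma$ of $\acl^{eq}(\emptyset)$, and build $g\in G$ with $b(g)=\xi$ by an approximate back-and-forth along a dense enumeration of $M$: starting from $\sigma$, I extend one coordinate at a time, at each finite stage using the homogeneity of the $\aleph_0$-categorical $M$ (equality of types over the current finite domain yields an approximate conjugacy) together with $\emptyset$-saturation to match the constraints that $\xi$ imposes on the algebraic predicates. The precompactness of the orbits $Y_f$ guarantees that these constraints are compact, so that each stage can be continued and the construction converges; passing to a limit in the Polish group $G$ produces the desired $g$.

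The main obstacle is exactly this last step: for a general Polish group the canonical map into its Bohr compactification is only dense, and the content of the theorem is that Roelcke precompactness --- in the guise of the $\aleph_0$-categoricity, and hence approximate homogeneity, of $M=\widehat{G}_L$ --- is what allows a single automorphism to realise a whole coherent family, closing up the inverse limit $\varprojlim K_f$. A secondary technical point, specific to the metric setting, is to obtain the equality $b(g)=\xi$ on the nose rather than merely approximately; this I would handle by a Cauchy argument, producing automorphisms whose actions on $\acl^{eq}(\emptyset)$ converge to $\sigma$ and using the compactness of the algebraic orbits to extract an exact realiser.
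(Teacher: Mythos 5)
You should first note that the paper does not actually prove this theorem: it is quoted from \cite{benBohr} (Theorem 5.5 there), with the remark that its model-theoretic content is the preservation of $\aleph_0$-categoricity after naming the algebraic closure of the empty set (Proposition 1.15 of that reference). Your overall architecture --- pass to $M=\widehat{G}_L$, identify $\AP(G)$ with the predicates algebraic over $\emptyset$, realise $bG$ as (the closure of the image of $G$ in) the compact group of elementary permutations of $\acl(\emptyset)$ with imaginaries, and then lift such a permutation $\sigma$ to a genuine automorphism by an approximate back-and-forth --- is essentially the strategy of the cited proof, and the reductions in your first two paragraphs are sound modulo routine verification.

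The gap is in the back-and-forth, which is exactly where the content lies. At a typical stage you have committed to $a\mapsto a'$ and must find $b'$ realising (approximately) the $\sigma$-image of $\tp(ab/\acl(\emptyset))$ over $a'$ \emph{inside $M$}. Two things are needed: that this image type is consistent (this uses that $\sigma$ is elementary, plus compactness), and, crucially, that it is realised in $M$ approximately uniquely, so that each step can be completed and the Cauchy scheme closes up. The tools you invoke do not deliver this: $\emptyset$-saturation and the homogeneity of $\aleph_0$-categorical structures concern types over the \emph{empty set}, whereas here you must control types over the (typically infinite) set $\acl(\emptyset)$; and the compactness of the algebraic orbits $Y_f$ only bounds where each individual algebraic element may go, not the simultaneous realisability in $M$ of a type over all of them. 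The missing fact is precisely that $M$ remains approximately saturated and homogeneous after naming $\acl(\emptyset)$ --- that is, Proposition 1.15 of \cite{benBohr} --- and it requires its own argument; without it the construction can stall, since a partial elementary map defined on $\acl(\emptyset)$ together with a finite tuple need not extend a priori. Granting that proposition, the rest of your outline goes through; as written, the proposal assumes the theorem's real content at the decisive step.
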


See \cite{benBohr}, Theorem 5.5. As mentioned there in the introduction, the model-theoretic counterpart of this result is the fact that $\aleph_0$-categoricity is preserved after naming the algebraic closure of the empty~set (see Proposition 1.15).

One could call a metric $G$-space $X$ \emph{almost periodic} if $\AP(X)=\RUC_u(X)$. This is a very strong condition. Indeed, for an action of a topological group $G$ by isometries on a complete bounded metric space $(X,d)$, the function $P_a(y)=d(a,y)$ (which is in $\RUC_u(X)$) is $\AP$ if and only if the closed orbit $[a]$ is compact. If the space of closed orbits $X\sslash G$ is compact, we can deduce that $X$ is almost periodic if and only if $X$ is compact (the reverse implication following from Arzel\`a--Ascoli theorem). This is the case for $\aleph_0$-categorical structures. Also, if $G$ is any Polish group with $\AP(G)=\UC(G)$ then $b\colon G\to bG$ is a topological embedding into a compact Hausdorff group, which implies that $G$ is already compact (see \cite[D.12.4]{vriesElements} together with \cite[p.~3--4]{kechrisPolish}).

\noindent\hrulefill
\section{$\WAP = \Asp = \SUC$}\label{wap=suc}

Let $f\colon M^\alpha\times M^\beta\to\RR$ be any formula on a metric structure $M$, and let $A\subset M^\alpha$, $B\subset M^\beta$ be any subsets. We recall that $f(x,y)$ has the \emph{order property}, let us say, \emph{on $A\times B$} if there are $\epsilon>0$ and sequences $(a_i)_{i<\omega}\subset A$, $(b_j)_{j<\omega}\subset B$ such that $|f(a_i,b_j)-f(a_j,b_i)|\geq\epsilon$ for all $i<j<\omega$. If $f(x,y)$ lacks the order property on $A\times B$ we say that it is \emph{stable on $A\times B$}. We invoke the following crucial result, essentially due to Grothendieck, as pointed out by Ben Yaacov in \cite{ben13} (see Fact 2 and the discussion before Theorem 3 therein).

\begin{fact}\label{stable is WAP} The formula $f(x,y)$ is stable on $A\times B$ if and only if $\{f_a|_B:a\in A\}$ is weakly precompact in~$\CC(B)$.\end{fact}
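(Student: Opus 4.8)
The plan is to deduce the statement from Grothendieck's double limit criterion for relative weak compactness in spaces of continuous functions, identifying the order property with the failure of the double limit condition. First I would reduce to a compact setting: since $f$ is bounded and uniformly continuous, the family $H=\{f_a|_B:a\in A\}$ is uniformly bounded and uniformly equicontinuous, so each $f_a$ extends continuously to a fixed compactification $\widehat{B}$ of $B$, and weak precompactness of $H$ in $\CC(B)$ is equivalent to that of the extended family in $\CC(\widehat{B})$. Grothendieck's theorem then says that $H$ is weakly precompact if and only if for all sequences $(a_i)_{i<\omega}\subset A$ and $(b_j)_{j<\omega}\subset B$ one has $\lim_i\lim_j f(a_i,b_j)=\lim_j\lim_i f(a_i,b_j)$ whenever both iterated limits exist. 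It thus remains to show that this double limit condition fails exactly when $f$ has the order property on $A\times B$.

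For the direction ``order property implies failure of weak precompactness'', suppose we are given $\epsilon>0$ and sequences with $|f(a_i,b_j)-f(a_j,b_i)|\geq\epsilon$ for $i<j$. By Ramsey's theorem I may pass to subsequences on which the sign is constant, say $f(a_i,b_j)-f(a_j,b_i)\geq\epsilon$ for all $i<j$; a further extraction (using boundedness of $f$) arranges that $L_i=\lim_j f(a_i,b_j)$ and $M_i=\lim_j f(a_j,b_i)$ exist for each $i$ and converge as $i\to\infty$. Taking $\lim_j$ in the inequality for fixed $i$ gives $L_i\geq M_i+\epsilon$, and then $\lim_i$ gives $\lim_i\lim_j f(a_i,b_j)\geq\lim_j\lim_i f(a_i,b_j)+\epsilon$ (after the obvious reindexing $\lim_i M_i=\lim_j\lim_i f(a_i,b_j)$), so the two iterated limits differ and $H$ is not weakly precompact.

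For the converse, suppose the double limit condition fails, with $\lim_i\lim_j f(a_i,b_j)=p\neq q=\lim_j\lim_i f(a_i,b_j)$ for suitable sequences. Set $\epsilon=|p-q|/3$. Using that both iterated limits exist, a standard diagonal extraction produces a subsequence (after reindexing) such that $f(a_i,b_j)$ lies within $\epsilon$ of $p$ whenever $i<j$ and within $\epsilon$ of $q$ whenever $i>j$. For $i<j$ we then have $f(a_i,b_j)\approx p$ while $f(a_j,b_i)\approx q$, whence $|f(a_i,b_j)-f(a_j,b_i)|\geq|p-q|-2\epsilon=\epsilon$, exhibiting the order property on $A\times B$.

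The main obstacle is packaging Grothendieck's criterion in the right generality and carrying out the two extractions cleanly. The reduction to $\widehat{B}$ must be done so that the double limit condition over $B$ faithfully reflects relative weak compactness in $\CC(\widehat{B})$; here equicontinuity of $H$ is what guarantees that the extensions are continuous and that sequences drawn from the dense set $B$ suffice to test the criterion. The extraction in each direction---forcing all relevant iterated limits to exist in the forward direction, and building the two-region array in the converse---is where the bookkeeping lives; both are routine applications of Ramsey's theorem and diagonalization given the uniform boundedness of $f$, but they are the steps requiring care.
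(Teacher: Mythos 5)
Your proof is correct and takes essentially the same route as the paper, which states this fact without proof and invokes Ben Yaacov's account of Grothendieck's double-limit criterion (\cite{ben13}, Fact 2): namely, reduce to a compactification of $B$, apply Grothendieck's theorem testing with sequences from the dense subset $B$, and identify the order property with the failure of the double-limit condition by the two Ramsey/diagonal extractions you describe.
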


\noindent\emph{(In the rest of this section we will  only need the case $B=M$ ($\beta=1$), so we shall only specify $A$ when referring to stability or the order property.)}

\medskip

On the other hand, a function $h\in\CC(X)$ on a $G$-space $X$ is \emph{weakly almost periodic ($\WAP$)} if the orbit $Gh\subset\CC(X)$ is weakly precompact (that is, precompact with respect to the weak topology on $\CC(X)$). It is not difficult to check that the family $\WAP(X)$ of weakly almost periodic functions on $X$ is a uniformly closed $G$-invariant subalgebra of $\CC(X)$ (for instance, resorting to Grothendieck's double limit criterion: Fact~2 in \cite{ben13}), but it is a bit involved to prove that $\WAP(X)$ is in fact a subalgebra of $\RUC(X)$; see Fact~2.7 in \cite{meg03} and the references thereof. If one knows that a function $h\in\CC(X)$ comes from a compactification $\nu\colon X\to Y$, it is an immediate consequence of Grothendieck's double limit criterion (in the form stated in \cite{ben13}) that $h\in\WAP(X)$ if and only if $\tilde{h}\in\WAP(Y)$.

From Fact \ref{stable is WAP} above we have, for $M$-definable predicates, that $f_a$ is $\WAP$ if and only if $f(x,y)$ is stable on $A=Ga$ (equivalently, on its closure $[a]$). By Proposition \ref{Form=RUC} one concludes the following (compare with Lemma 5.1 in \cite{bentsa}).

\begin{lem} If $M$ is $\aleph_0$-categorical, then a continuous function is in $\WAP_u(M)$ if and only if it is of the form $f_a$ for a formula $f(x,y)$ stable on $[a]$.\end{lem}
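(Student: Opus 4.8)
The plan is to read off the lemma as a direct combination of Proposition~\ref{Form=RUC} and Fact~\ref{stable is WAP}, the only genuine work being the bookkeeping that translates weak precompactness of a norm-orbit into stability of a formula. First I would unwind the definitions. Since $\WAP(M)\subset\RUC(M)$ always, the subscript yields $\WAP_u(M)=\WAP(M)\cap\RUC_u(M)$, and by Proposition~\ref{Form=RUC} (this is where $\aleph_0$-categoricity enters) we have $\RUC_u(M)=\DEF(M)$. Hence a continuous $h$ lies in $\WAP_u(M)$ exactly when $h$ is of the form $f_a$ for some formula $f(x,y)$ and parameter $a\in M^\omega$ and, in addition, $h$ is weakly almost periodic. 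So the entire content reduces to characterizing when such an $f_a$ is $\WAP$.

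Next I would identify the norm-orbit $Gf_a$. Because $f$ is a definable predicate it is $G$-invariant, so $f(ga,b)=f(a,g^{-1}b)$, which gives $gf_a=f_{ga}$ and therefore $Gf_a=\{f_{a'}:a'\in Ga\}$. Using that $f(x,y)$ is uniformly continuous (exactly as in the remark following Proposition~\ref{AP formulas}), the families $\{f_{a'}:a'\in Ga\}$ and $\{f_{a'}:a'\in[a]\}$ share the same norm-closure, hence the same weak closure; so $Gf_a$ is weakly precompact in $\CC(M)$ if and only if $\{f_{a'}:a'\in[a]\}$ is. Now Fact~\ref{stable is WAP}, applied with $A=[a]$ and $B=M$, asserts that the latter set is weakly precompact precisely when $f(x,y)$ is stable on $[a]$.

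Combining these two observations proves both directions at once: $h\in\WAP_u(M)$ iff $h=f_a$ with $f_a\in\DEF(M)$ and $Gf_a$ weakly precompact, iff $h=f_a$ for a formula $f(x,y)$ stable on $[a]$. I expect no substantive obstacle here---the hard analytic input is already packaged in Fact~\ref{stable is WAP} (Grothendieck's criterion) and in the representation $\RUC_u(M)=\DEF(M)$ of Proposition~\ref{Form=RUC}. The only point demanding a little care is the passage between the orbit $Ga$ and its closure $[a]$: one must check that replacing $Ga$ by $[a]$ changes neither the norm-closure of the associated family of functions nor, consequently, the stability of $f$. This is a routine uniform-continuity approximation, and it is precisely what lets the statement be phrased with $[a]$ rather than with $Ga$.
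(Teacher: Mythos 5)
Your proposal is correct and follows the same route as the paper: the paper derives the lemma in exactly this way, combining Proposition~\ref{Form=RUC} (to get $\RUC_u(M)=\DEF(M)$) with Fact~\ref{stable is WAP} applied to $A=Ga$, noting the equivalence with stability on the closure $[a]$. Your additional care about the identification $Gf_a=\{f_{a'}:a'\in Ga\}$ and the passage from $Ga$ to $[a]$ via uniform continuity just makes explicit what the paper leaves implicit.
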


The algebra $\WAP(X)$ can also be characterized as the class of functions coming from a reflexive-representable compactification of $X$. This was first proven in \cite{meg03}, Theorem~4.6; for an alternative exposition see Theorem 2.9 in \cite{meg08}. (We also point out the paper of Iovino \cite{iov98} for an earlier treatment of the connection between stability and reflexive Banach spaces.)

A natural generalization of weak almost periodicity is thus to replace \emph{reflexive} by \emph{Asplund} in the latter characterization. Recall that a Banach space is Asplund if the dual of every separable subspace is separable, and that every reflexive space has this property. In this way one gets the family of \emph{Asplund functions}, $\Asp(X)$. This is a uniformly closed $G$-invariant subalgebra of $\RUC(X)$. See \cite[\textsection 7]{meg03}.

For a \emph{compact} $G$-space $Y$, a function $h\in\CC(Y)$ is shown to be Asplund if and only if the orbit $Gh\subset\CC(Y)$ is a \emph{fragmented family}; see Theorem 9.12 in \cite{glameg06}. This means that for any nonempty $B\subset Y$ and any $\epsilon>0$ there exists an open set $O\subset Y$ such that $B\cap O$ is nonempty and $$|h(gy)-h(gy')|<\epsilon$$ for every $g\in G$ and $y,y'\in B\cap O$. If $X$ is an arbitrary $G$-space, then $h\in\CC(X)$ belongs to $\Asp(X)$ if and only if it comes from an Asplund function on some compactification of~$X$. If a function $h$ comes from two compactifications $Y$ and $Z$ with $Y$ larger than $Z$, it is an exercise (using the characterization by fragmentability) to check that the extension of $h$ to $Y$ is Asplund if and only if so is its extension to $Z$ (see the proof of Lemma 6.4 in \cite{glameg06}). That is, any extension of $h$ to some compactification can be used to check whether $h$ is Asplund; for example, a predicate $f_a\in\DEF(M)$ is Asplund if and only if its extension to $S(M)$ or to $S_f(Ga)$ satisfies the fragmentability condition.

It will be interesting to bring in a further weaker notion, introduced in \cite{glameg08}. A function $h\in\CC(Y)$ on a compact $G$-space is \emph{strongly uniformly continuous ($\SUC$)} if for every $y\in Y$ and $\epsilon>0$ there exists a neighborhood $U$ of the identity of $G$ such that $$|h(gy)-h(guy)|<\epsilon$$ for all $g\in G$ and $u\in U$. In this case it is immediate that, if $j\colon Y\to Z$ is a compactification between compact $G$-spaces, then $h\in\CC(Z)$ is $\SUC$ if and only if $hj\in\CC(Y)$ is $\SUC$. A function $h\in\RUC(X)$ on an arbitrary $G$-space $X$ is called $\SUC$ if its extension to some (any) compactification (from which $h$ comes) is $\SUC$. One can see readily that: (i) the family of all strongly uniformly continuous functions on a $G$-space $X$ forms a uniformly closed $G$-invariant subalgebra $\SUC(X)$ of $\RUC(X)$; (ii) every Asplund function is $\SUC$: in the fragmentability condition we take $B=Gy\subset Y$, then use the continuity of the action of $G$ on $Y$.

It follows from our remarks so far that, in general, $$\WAP(X)\subset\Asp(X)\subset\SUC(X)\subset\RUC(X).$$ It is also clear that $\SUC(G)\subset\UC(G)$ for the regular left action of $G$ on itself: we apply the property defining $\SUC$ to the compactification $Y=G^{\RUC}$ (for instance) and the identity element $y=1\in G\subset Y$.

An important motivation for the algebra of $\SUC$ functions comes from the viewpoint of semigroup compactifications of $G$. We have already mentioned the universal property of $G^{\AP}$. Similarly, $G^{\WAP}$ is the \emph{universal semitopological semigroup compactification} of $G$ (see \cite[\textsection 5]{uspComp}). For their part, $G^{\Asp}$ and $G^{\RUC}$ are \emph{right topological semigroup compactifications} of $G$. In their work \cite{glameg08}, the authors showed that the compactification $G^{\SUC}$ is also a right topological semigroup compactification of $G$, and that $\SUC(G)$ is the largest subalgebra of $\UC(G)$ with this property (see Theorem 4.8 therein). In particular, the Roelcke compactification $G^{\UC}$ has the structure of a right topological semigroup if and only if $\SUC(G)=\UC(G)$.

We aim to prove the equality $\WAP=\SUC$ (restricted to $\RUC_u$) for $\aleph_0$-categorical structures and for their automorphism groups.

Switching to logic language, let us say that a formula $f(x,y)$ is \emph{$\SUC$ on a subset $A\subset M^\alpha$} if for any $b$ in any elementary extension of $M$ and every $\epsilon>0$ there are $\delta>0$ and a finite tuple $c$ from $M$ such that for every $a\in A$ and every automorphism $u\in G$ satisfying $d(uc,c)<\delta$ we have $$|f(a,b)-f(ua,b)|<\epsilon.$$ We readily get the following.

\begin{lem}\label{SUC formulas} For a metric structure $M$, a function $f_a\in\DEF(M)$ is $\SUC$ if and only if the formula $f(x,y)$ is $\SUC$ on $[a]$.\end{lem}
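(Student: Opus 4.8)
The plan is to unwind both sides through the compactification dictionary established earlier and match the two definitions of $\SUC$ — the topological one on a compact $G$-space and the logical one for formulas — by using the type space $S(M)$ (equivalently, the relevant cyclic $G$-space $X_{f_a}=S_f(Ga)$) as the common compactification. Recall from Proposition \ref{Form=RUC} that $f_a\in\DEF(M)=\RUC_u(M)$, and that by definition $f_a\in\CC(M)$ is $\SUC$ (as a function on the $G$-space $M$) precisely when its continuous extension $\tilde f_a$ to some (any) compactification from which $f_a$ comes satisfies the topological $\SUC$ condition. So the first step is to fix a convenient such compactification: I will take $M\to S(M)$, where $\tilde f_a(p)=\tilde f_a(\tp(b/M))$ is naturally written $f(a,b)$ for $b$ realizing $p$ in an elementary extension $N$ of $M$, exactly as in the discussion of types-versus-elements in elementary extensions.

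With this identification the two conditions become very nearly the same statement, and the work is to translate ``neighborhood $U$ of the identity of $G$'' back and forth with ``finite tuple $c$ from $M$ and $\delta>0$ with $d(uc,c)<\delta$''. Here I would invoke the fact that $G=\Aut(M)$ carries the topology of pointwise convergence on $M$: a basic neighborhood of the identity is exactly of the form $\{u\in G:d(uc,c)<\delta\}$ for a finite tuple $c$ from (a dense subset of) $M$ and $\delta>0$. This is the bridge that makes the logical phrasing of $\SUC$ on $[a]$ literally the unwinding of the topological phrasing. Concretely, for a fixed point $p\in S(M)$ with realization $b$ in $N$, the topological condition reads: for every $\epsilon>0$ there is a neighborhood $U\ni 1$ with $|\tilde f_a(gp)-\tilde f_a(gup)|<\epsilon$ for all $g\in G,u\in U$; rewriting $\tilde f_a(gp)=f(g^{-1}a,b)$ (using $G$-invariance of the formula and the definition of the $G$-action on $S(M)$, $gp=\tp(gb/M)$ so that $\tilde f_a(gp)=f(a,g^{-1}b)=f(g^{-1}a,b)$), and replacing $U$ by the basic neighborhood $\{u:d(uc,c)<\delta\}$, this becomes precisely $|f(a',b)-f(ua',b)|<\epsilon$ for $a'=g^{-1}a$ ranging over $Ga$ and $u$ with $d(uc,c)<\delta$. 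Since $f(x,y)$ is uniformly continuous the condition for $a'\in Ga$ is equivalent to the condition for $a'\in[a]=\overline{Ga}$, matching the statement of Lemma \ref{SUC formulas}.

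The two remaining points to check are routine but should be stated. First, one must verify that the quantifier ``for any $b$ in any elementary extension'' on the logical side corresponds to ``for every $y\in Y$'' on the topological side: this is exactly the identification of $S(M)=S^M_y(M)$ with the set of types over $M$, each realized by some $b$ in some elementary extension $N$, which is the content of the $\emptyset$-saturation/elementary-extension facts recalled above. Second, one must note that whether $f_a$ is $\SUC$ does not depend on the chosen compactification — this is the general remark, quoted earlier, that $\SUC$ passes up and down along morphisms $j\colon Y\to Z$ of compact $G$-spaces — so using $S(M)$ rather than the cyclic system $X_{f_a}$ is harmless.

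I do not expect any serious obstacle: the lemma is essentially a definitional unwinding, and the only place demanding care is the correct bookkeeping of the $G$-action on types, namely that $gp=\tp(gb/M)$ and hence $\tilde f_a(gup)=f(a,(gu)^{-1}b)=f((gu)^{-1}a,b)$, so that the pair $(g,u)$ on the topological side matches $(g^{-1}a,u^{-1})$-type reindexing on the logical side. Once the sign/direction conventions for the action are pinned down, the equivalence falls out immediately; the mild subtlety is simply to make sure the same neighborhood $U=\{u:d(uc,c)<\delta\}$ serves both the universally-quantified $g$ and the translation between $Ga$ and $[a]$, which is guaranteed by the left-invariance of the setup and the uniform continuity of $f$.
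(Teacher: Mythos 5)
Your unwinding is correct and is exactly the argument the paper leaves implicit (the lemma is stated with ``we readily get the following'' and no written proof): one passes to the type space $S(M)$ as the compactification, computes $\tilde f_a(gp)=f(g^{-1}a,b)$ for $b$ realizing $p$, matches neighborhoods of the identity with the symmetric basic sets $\{u:d(uc,c)<\delta\}$, and extends from $Ga$ to $[a]$ by uniform continuity. The bookkeeping of the action and the quantifier translation between points of $S(M)$ and realizations in elementary extensions are handled correctly.
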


The most basic example of a non-stable formula in an $\aleph_0$-categorical structure is the order relation on the countable dense linear order without endpoints. It is worth looking into this case.

\begin{example}\label{ejemplo de Q} The order relation $x<y$ on the (classical) structure $(\QQ,<)$ is not $\SUC$ on $\QQ$. Indeed, let $r\in\RR\setminus\QQ$, $c_1,\dots,c_n\in\QQ$. Suppose $c_i<c_{i+1}$ for each $i$, and say $c_{i_0}<r<c_{i_0+1}$. Take $a\in\QQ$, $c_{i_0}<a<r$. There is a monotone bijection $u$ fixing every $c_i$ and such that $r<ua<c_{i_0+1}$. The claim follows.\end{example}

We will generalize the analysis of this simple example to any non-stable formula in any $\aleph_0$-categorical structure. To this end we shall use the following standard lemma.

\begin{fact}\label{sequence indexed by Q} Let $M$ be $\emptyset$-saturated, $A\subset M^\omega$ a type-definable subset. If a formula $f(x,y)$ has the order property on $A$, then there are an elementary extension $N$ of $M$, distinct real numbers $r,s\in\RR$ and elements $(a_i)_{i\in\QQ}\subset A$, $(b_j)_{j\in\RR}\subset N$ such that $f(a_i,b_j)=r$ for $i<j$ and $f(a_i,b_j)=s$ for $j\leq i$.\end{fact}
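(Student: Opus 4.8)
The plan is to first extract a two-valued indiscernible configuration from the witnesses of the order property, and then to fatten the index set of the parameters $b$ from $\QQ$ to $\RR$ by realizing suitable \emph{cut types}.

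I would start from $\epsilon>0$ and sequences $(a_i)_{i<\omega}\subset A$, $(b_i)_{i<\omega}\subset M$ witnessing the order property, so that $|f(a_i,b_j)-f(a_j,b_i)|\geq\epsilon$ for $i<j$. Forming the pairs $c_i=(a_i,b_i)$ and applying the Ramsey extraction already used in the proof of Proposition~\ref{AP formulas} (together with $\emptyset$-saturation), I would obtain an indiscernible sequence of pairs $(a_i,b_i)_{i\in\QQ}$ in $M$, indexed by $\QQ$ and satisfying the conditions that hold cofinally along the original sequence, evaluated via $(c_i,c_j)\mapsto f(a_i,b_j)$; these include the type-definable requirement $a_i\in A$ and the inequality supplied by the order property. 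By indiscernibility the value $f(a_i,b_j)$ depends only on the order type of $(i,j)$, so I may call it $r$ when $i<j$ and $s$ when $i>j$; since the condition ``$\geq\epsilon$'' is closed and holds cofinally, it passes to the sequence, giving $|r-s|\geq\epsilon$ and hence $r\neq s$. Note that no use is made of the diagonal value $f(a_i,b_i)$.

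It then remains to produce $(b_j)_{j\in\RR}$. Over the countable set $\{a_i:i\in\QQ\}\subset M$ I would consider, for each $j\in\RR$, the partial type
$$p_j(y)=\{f(a_i,y)=r:i\in\QQ,\ i<j\}\cup\{f(a_i,y)=s:i\in\QQ,\ j\leq i\}.$$
Each $p_j$ is approximately finitely realized in $M$: given finitely many rational indices $i_1<\dots<i_k$ split by $j$ as $i_m<j\leq i_{m+1}$, any rational $p$ with $i_m<p<i_{m+1}$ provides an element $b_p$ of the sequence with $f(a_{i_l},b_p)=r$ for $l\leq m$ and $f(a_{i_l},b_p)=s$ for $l>m$, precisely because $p$ differs from every $i_l$. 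Since the conditions attached to distinct reals involve disjoint variables $y_j$, the whole union $\bigcup_{j\in\RR}p_j(y_j)$ is then consistent, and realizing it in an elementary extension $N$ of $M$ (by compactness, in a sufficiently saturated one) provides $(b_j)_{j\in\RR}\subset N$ with $f(a_i,b_j)=r$ for $i<j$ and $f(a_i,b_j)=s$ for $j\leq i$.

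The delicate point---and my reason for realizing fresh cut types instead of reusing the sequence $(b_i)_{i\in\QQ}$---is the diagonal: the statement requires the value $s$ for every $j\leq i$, in particular for $j=i$ with $j$ rational, whereas the indiscernible sequence only pins down $f(a_i,b_i)$ as an a priori unrelated third constant. Selecting the auxiliary witnesses $b_p$ strictly between consecutive rational indices avoids that value altogether and forces $f(a_j,b_j)=s$ for the newly realized $b_j$. The remaining ingredients---the continuous-logic indiscernible extraction and the routine consistency bookkeeping---present no real difficulty.
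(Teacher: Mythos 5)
Your proof is correct, and the second half (realizing the cut types $p_j(y_j)$ on disjoint variables, with finite satisfiability witnessed by rationals strictly between the relevant indices) is essentially the same as the paper's final step. Where you diverge is in producing the rational-indexed biarray: the paper does not extract an indiscernible sequence at all, but instead passes to subsequences so that the two iterated limits $\lim_k\lim_l f(a'_k,b'_l)=r$ and $\lim_k\lim_l f(a'_l,b'_k)=s$ exist, and then observes that the conditions $f(x_i,y_j)=r$ ($i<j$), $f(x_j,y_i)=s$, $x_i\in A$ are approximately finitely realized by suitably spread-out terms of the original sequences, so that $\emptyset$-saturation yields $(a_i)_{i\in\QQ}$, $(b_j)_{j\in\QQ}$ in $M$ directly. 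Your Ramsey-plus-saturation extraction of an indiscernible sequence of pairs achieves the same thing at the cost of invoking slightly heavier machinery (though machinery the paper already deploys in Proposition~\ref{AP formulas}); the double-limit route is marginally more economical and stays closer to the Grothendieck-style arguments behind Fact~\ref{stable is WAP}. One genuine point in your favour: the paper keeps the rational-indexed $b_j$'s from the first step and only realizes new witnesses for $j\in\RR\setminus\QQ$, which leaves the diagonal values $f(a_i,b_i)$ for rational $i$ unconstrained, so the statement's clause ``$f(a_i,b_j)=s$ for $j\leq i$'' is, strictly speaking, only secured off the rational diagonal (harmless for the applications in Propositions~\ref{mainprop} and~\ref{Asp prop}, which use only strict comparisons). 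Your decision to discard the old $b_i$'s and realize fresh $b_j$ for \emph{every} $j\in\RR$ closes that small gap and delivers the statement exactly as written.
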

\begin{proof} Suppose there are $\epsilon>0$ and sequences $(a'_k)_{k<\omega}\subset A$, $(b'_l)_{l<\omega}\subset M$ such that $|f(a'_k,b'_l)-f(a'_l,b'_k)|\geq\epsilon$ for all $k<l<\omega$. Since $f$ is bounded, passing to subsequences carefully we can assume that $\lim_k\lim_lf(a'_k,b'_l)=r$ and $\lim_k\lim_lf(a'_l,b'_k)=s$, necessarily with $|r-s|\geq\epsilon>0$. Now we consider the conditions in the countable variables $(x_i)_{i\in\QQ}$, $(y_j)_{j\in\QQ}$ asserting, for each pair of rational numbers $i<j$, $$x_i\in A,\ f(x_i,y_j)=r\text{ and }f(x_j,y_i)=s.$$ The elements $a'_k,b'_l$ can be used to show that these conditions are approximately finitely realized in $M$. By saturation, there are $(a_i)_{i\in\QQ}\subset A$, $(b_j)_{j\in\QQ}\subset M$ satisfying the conditions.

Finally, the conditions $f(a_i,y_j)=r$ for $i<j$, $i\in\QQ$, $j\in\RR\setminus\QQ$, together with $f(a_i,y_j)=s$ for $j\leq i$, $i\in\QQ$, $j\in\RR\setminus\QQ$, are approximately finitely realized in $\{b_j\}_{j\in\QQ}$. Hence they are realized by elements $(b_j)_{j\in\RR\setminus\QQ}$ in some elementary extension of $M$.\end{proof}

\begin{prop}\label{mainprop} Let $M$ be $\aleph_0$-categorical. If $f(x,y)$ has the order property on a definable set $A$, then $f(x,y)$ is not $\SUC$ on $A$.\end{prop}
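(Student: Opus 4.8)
The plan is to generalise Example \ref{ejemplo de Q}: I realise the order property as an ``irrational cut'', and then, given the finite control tuple $c$, I slide a point from one side of the cut to the other by an automorphism that barely moves $c$. First I would feed the order property into Fact \ref{sequence indexed by Q} to obtain an elementary extension $N$ of $M$, distinct reals $r\neq s$, and families $(a_i)_{i\in\QQ}\subset A$ and $(b_j)_{j\in\RR}\subset N$ with $f(a_i,b_j)=r$ for $i<j$ and $f(a_i,b_j)=s$ for $j\leq i$. I would moreover arrange, by imposing the corresponding conditions before applying saturation (and extracting with Ramsey's theorem), that $(a_i)_{i\in\QQ}$ is indiscernible. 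Fixing an irrational $t\in\RR\setminus\QQ$ I set $b=b_t$ and $\epsilon=\tfrac13|r-s|$; note that $f(a_i,b)=r$ for rational $i<t$ and $f(a_i,b)=s$ for rational $i>t$. This $b$ and this $\epsilon$ are to be the witnesses that $f(x,y)$ is not $\SUC$ on $A$; everything else must be produced from an \emph{arbitrary} finite tuple $c$ from $M$ and an \emph{arbitrary} $\delta>0$.

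So fix $c$ and $\delta$. Using the homogeneity of the $\aleph_0$-categorical structure $M$ (if $\tp(a'c)$ and $\tp(a''c)$ are close in $S(\emptyset)$ then some $u\in G$ carries $(a'',c)$ to within any prescribed distance of $(a',c)$, hence $d(uc,c)<\delta$ and $ua''$ is close to $a'$), the whole task reduces to producing rationals $i<t<i'$ for which $a_i$ and $a_{i'}$ have nearly the same type over $c$. Indeed, taking $a=a_{i'}\in A$, so that $f(a,b)=s$, and an automorphism $u$ with $uc\approx c$ and $ua_{i'}\approx a_i$, the uniform continuity of $f$ gives $f(ua_{i'},b)\approx f(a_i,b)=r$; hence $|f(a,b)-f(ua,b)|$ lies within a small error of $|r-s|\geq 3\epsilon$, which is $\geq\epsilon$ once $\delta$ and the homogeneity error are small enough.

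To produce such a pair I would argue exactly as in $(\QQ,<)$, where the points immediately below and immediately above an irrational cut fall into the same gap of the finite rational tuple $c$ and so share their type over $c$. In the general setting the ``gap'' becomes a type over $c$: extracting from the indiscernible sequence, by Ramsey's theorem, a subsequence that is indiscernible over $c$ — all of whose members therefore share a single (approximate) type over $c$ — one simply reads off the desired pair, \emph{provided} the subsequence contains indices on both sides of $t$.

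The hard part is precisely this last proviso: one must guarantee that the single cut $b_t$ is not ``detected'' by the finite tuple $c$. A priori the order encoded by $f$ could be reflected over finitely many parameters — this already happens for badly chosen indiscernible sequences in the random graph, where the $c$-type of $a_i$ can jump exactly at $t$ — and then no $c$-type would be realised on both sides of the cut, and the $c$-indiscernible subsequence would land on one side only. The crux of the proof is thus to choose the configuration \emph{generically}: concretely, to take $(a_i)$ sufficiently homogeneous (a coheir- or Morley-type sequence), or equivalently to realise $b=b_t$ by a type over $M$ finitely satisfiable in the sequence, so that for every finite $c$ the type of $a_i$ over $c$ has the same left and right limits at $t$. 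This genericity is exactly the abstract counterpart of the irrationality of $t$ in $(\QQ,<)$; once it is secured, the $c$-indiscernible subsequence can be taken to straddle $t$, and the reduction of the second paragraph closes the argument.
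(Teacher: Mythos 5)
You have the right overall shape (Fact~\ref{sequence indexed by Q}, then homogeneity of the $\aleph_0$-categorical structure to convert ``nearly equal types over $c$'' into an automorphism $u$ with $d(uc,c)<\delta$), and you have correctly located the crux: with a single fixed cut $b=b_t$ you must show that \emph{no} finite tuple $c$ --- produced adversarially \emph{after} $b$ is chosen --- separates the $a_i$ with $i<t$ from those with $i>t$ near $t$. But that step is not established, and the fix you propose does not do the job as stated. A ``coheir- or Morley-type sequence'' is not available here: the $a_i$ must lie in $A\subset M^\alpha$, i.e.\ inside the separable model itself (you need to apply actual automorphisms of $M$ to them), whereas coheir/Morley sequences over $M$ live outside $M$; and realising $\tp(b_t/M)$ as finitely satisfiable in the sequence constrains $b_t$, not the types $\tp(a_i/c)$, so it does not yield ``same left and right limits at $t$ for every finite $c$''. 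As written, the proof has a genuine gap exactly at the point you call the crux.

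For comparison, the paper avoids the single-cut difficulty entirely by reversing the roles: assuming $\SUC$, each of the \emph{uncountably many} $b_j$ ($j\in\RR$) gets a witnessing neighbourhood, second countability of $G$ fixes one witness $U=\{u:d(uc,c)<\delta\}$ valid for all $j$ in an infinite discrete $J\subset\RR$, and then compactness of $M^\omega\sslash G$ (this is where $\aleph_0$-categoricity enters) pigeonholes two indices $j<j'$ with $ca_{i(j)}$ and $ca_{i(j')}$ nearly conjugate, the interleaving $j''<i(j)<j$ guaranteeing that $a_{i(j)}$ and $a_{i(j')}$ sit on opposite sides of the cut $b_j$. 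Your single-cut plan can in fact be completed, but by a different device than the one you suggest: by separability only countably many pairs $(c,\delta)$ need to be handled, and for each such pair a covering of the compact quotient $M^{n+\alpha}\sslash G$ by finitely many small balls shows that the set of cuts $t$ at which the left and right accumulation sets of $\{[ca_i]\}$ fail to meet is countable; hence some $t\in\RR$ is good for all $(c,\delta)$ simultaneously. That counting argument --- not a special choice of the sequence --- is the abstract counterpart of the irrationality of $t$ in Example~\ref{ejemplo de Q}, and some version of it (or of the paper's pigeonhole) has to be supplied for your proof to close.
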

\begin{proof} We apply the previous fact to find elements $(a_i)_{i\in\QQ}\subset A$, $(b_j)_{j\in\RR}$ in some elementary extension of $M$ and real numbers $r\neq s$ such that $f(a_i,b_j)=r$ if $i<j$, $f(a_i,b_j)=s$ if $j\leq i$. Suppose $f(x,y)$ has the $\SUC$\ property for $\epsilon=|r-s|/2$; since $G$ is second countable and $\RR$ is uncountable, there is an open neighborhood $U$ of the identity that witnesses the property for an infinite number of elements $b_j$, say for every $b_j$ with $j$ in an infinite set $J\subset\RR$. By passing to a subset we may assume that $J$ is discrete, and thus for each $j\in J$ we may take a rational $i(j)<j$ such that $j'<i(j)$ for every $j'<j$, $j'\in J$. We may assume that $U$ is the family of automorphisms moving a finite tuple $c$ at a distance less than $\delta$; say $n$ is the length of the tuple $c$. Now let $\eta=\Delta_f(|r-s|/2)$, where $\Delta_f$ is a modulus of uniform continuity for~$f(x,y)$.

Since $M$ is $\aleph_0$-categorical the quotient $M^\omega\sslash G$ is compact, so there must be a pair $j<j'$ in $J$ and an automorphism $u$ such that 
$$d(u(ca_{i(j)}),ca_{i(j')})<\min(\delta,\eta/2^n).$$ In particular $d(uc,c)<\delta$, so $u\in U$. In addition, since $d(ua_{i(j)},a_{i(j')})<\eta$, $f(a_{i(j')},b_j)=s$ and $f(a_{i(j)},b_j)=r$, we have $$|f(a_{i(j)},b_j)-f(ua_{i(j)},b_j)|\geq |r-s|/2,$$ contradicting the fact that $U$ witnesses the $\SUC$\ property for $b_j$ and $\epsilon=|r-s|/2$.\end{proof}

\begin{rem} We can offer a maybe more conceptual argument to a model-theorist. Suppose $M$ is $\aleph_0$-categorical, take $f_a\in\SUC_u(M)\subset\DEF(M)$ (we recall Proposition \ref{Form=RUC}) and let $p$ be a type in $S_f(Ga)$. Consider $d_pf\colon Ga\to\RR$ given by $d_pf(ga)=g\tilde{f_a}(p)$, which is well-defined and uniformly continuous. Now, the $\SUC$ condition for the extension $\tilde{f_a}\colon S_f(Ga)\to\RR$ gives, for every $\epsilon>0$, a neighborhood $U$ of the identity of $G$ such that $$|d_pf(u^{-1}ga)-d_pf(ga)|<\epsilon$$ for every $g\in G$ and $u\in U$. That is to say, $d_pf\in\RUC_u(Ga)$. A mild adaptation of Proposition~\ref{Form=RUC} allows us to deduce that $d_pf$ is an $M$-definable predicate on $Ga$. In other words, \emph{every $f$-type over $Ga$ is definable in $M$}, which (bearing in mind that $M$ is saturated and that $[a]$ is definable) is well-known to be equivalent to the stability of $f(x,y)$ on $Ga$.
For more on definability of types in continuous logic see \cite[\textsection 7]{benusv10} (particularly Proposition 7.7 for the equivalences of stability), and the topical discussion of \cite{ben13}. Yet an argument based on some variation of Fact \ref{sequence indexed by Q} is needed to prove that definability of types implies stability.
\end{rem}

The proposition and previous lemmas yield the desired conclusion.

\begin{cor} Let $M$ be an $\aleph_0$-categorical structure. Then $\WAP_u(M)=\Asp_u(M)=\SUC_u(M)$.\end{cor}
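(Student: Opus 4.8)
The plan is to establish the single nontrivial inclusion $\SUC_u(M)\subset\WAP_u(M)$; the reverse inclusions $\WAP_u(M)\subset\Asp_u(M)\subset\SUC_u(M)$ are merely the general hierarchy $\WAP(X)\subset\Asp(X)\subset\SUC(X)$ recorded above, intersected with $\RUC_u(M)$. So all the content is in showing that a strongly uniformly continuous $u$-function is already weakly almost periodic.

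First I would take an arbitrary $h\in\SUC_u(M)$. Since $\SUC(M)\subset\RUC(M)$ and $h$ is by definition of the subscript also uniformly continuous, we have $h\in\RUC_u(M)$; as $M$ is $\aleph_0$-categorical, Proposition \ref{Form=RUC} gives $\RUC_u(M)=\DEF(M)$, so we may write $h=f_a$ for some formula $f(x,y)$ and parameter $a\in M^\omega$. Next, Lemma \ref{SUC formulas} translates the hypothesis $f_a\in\SUC(M)$ into the statement that the formula $f(x,y)$ is $\SUC$ on the closed orbit $[a]=\overline{Ga}$. The key point needed to proceed is that $[a]$ is a \emph{definable} set: being $G$-invariant, closed, and (in an $\aleph_0$-categorical structure) at zero distance exactly on itself, its distance predicate $P_{[a]}(x)=d(x,[a])$ is continuous and $G$-invariant, hence a definable predicate. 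This puts us precisely in the situation of Proposition \ref{mainprop}.

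Then I would apply the contrapositive of Proposition \ref{mainprop}: since $f(x,y)$ is $\SUC$ on the definable set $[a]$, it cannot have the order property on $[a]$; that is, $f(x,y)$ is stable on $[a]$. Finally, by the lemma characterizing $\WAP_u(M)$ in terms of stable formulas, $f_a\in\WAP_u(M)$. Hence $\SUC_u(M)\subset\WAP_u(M)$, and together with the general inclusions all three algebras coincide.

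The corollary is thus an assembly of the preceding results, and the genuine mathematical work sits entirely inside Proposition \ref{mainprop}, whose proof exploits the order configuration indexed by $\QQ$ and $\RR$ from Fact \ref{sequence indexed by Q} together with the compactness of $M^\omega\sslash G$. The only subtlety in the present step is verifying that $[a]$ qualifies as a definable set so that Proposition \ref{mainprop} is applicable; this is exactly where $\aleph_0$-categoricity is used, and it is the one place where one must not simply invoke type-definability but rather definability of $[a]$.
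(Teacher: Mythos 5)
Your proposal is correct and follows exactly the route the paper intends when it says ``the proposition and previous lemmas yield the desired conclusion'': reduce to the inclusion $\SUC_u(M)\subset\WAP_u(M)$, write $h=f_a$ via Proposition \ref{Form=RUC}, translate $\SUC$ into the formula-level property on $[a]$ via Lemma \ref{SUC formulas}, and apply the contrapositive of Proposition \ref{mainprop} together with the lemma characterizing $\WAP_u(M)$ by stability on $[a]$. Your explicit check that $[a]$ is definable (not merely type-definable) in the $\aleph_0$-categorical case is a point the paper leaves implicit, and it is handled correctly.
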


\begin{theorem}\label{main} Let $G$ be a Roelcke precompact Polish group. Then $\WAP(G)=\Asp(G)=\SUC(G)$.\end{theorem}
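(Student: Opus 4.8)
The plan is to reduce the statement to the preceding Corollary by realizing $G$ as the automorphism group of an $\aleph_0$-categorical structure. First I would set $M=\widehat{G}_L$, endowed with the canonical first-order structure described before Proposition~\ref{Form=UC}, so that $\Aut(M)=G$. Since $G$ is Roelcke precompact, Theorem~\ref{Roelcke bentsa} tells us that the action of $G$ on $\widehat{G}_L$ is approximately oligomorphic; hence $M$ is $\aleph_0$-categorical and the Corollary applies, giving $\WAP_u(M)=\Asp_u(M)=\SUC_u(M)$. It then remains to transport this equality along the norm-preserving $G$-isomorphism $\RUC_u(\widehat{G}_L)\simeq\UC(G)$, namely the restriction map, which is isometric because $G$ is dense in $\widehat{G}_L$.

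Before transporting, I would record that on the group side the three algebras in question are already contained in $\UC(G)=\RUC_u(G)$: by the general inclusions established in this section we have $\WAP(G)\subset\Asp(G)\subset\SUC(G)\subset\UC(G)$, whence $\WAP(G)=\WAP_u(G)$, $\Asp(G)=\Asp_u(G)$ and $\SUC(G)=\SUC_u(G)$. This makes the comparison with the $u$-algebras on $M$ meaningful, and it also shows that it is enough to prove the single inclusion $\SUC(G)\subset\WAP(G)$.

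The crux is then to verify that the isomorphism $\RUC_u(\widehat{G}_L)\simeq\UC(G)$ carries $\WAP_u(M)$, $\Asp_u(M)$, $\SUC_u(M)$ exactly onto $\WAP(G)$, $\Asp(G)$, $\SUC(G)$. The mechanism is that each of these three classes is determined by the extension of the function to a compactification, and that by Remark~\ref{compactifications of G} the $u$-compactifications of $G$ and of $\widehat{G}_L$ are in canonical bijection as compact $G$-spaces. Concretely, if $h\in\UC(G)$ corresponds to $\hat h\in\RUC_u(\widehat{G}_L)$, then $h$ and $\hat h$ come from the same compact $G$-space $Y$ (for instance their common cyclic compactification, which is a $u$-compactification as $h\in\RUC_u(G)$) with the same continuous extension $\tilde h\in\CC(Y)$. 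Since weak almost periodicity, Asplundness and strong uniform continuity of a function coming from a compactification are intrinsic properties of $\tilde h$ on $Y$ --- respectively the weak precompactness of $G\tilde h$, the fragmentability of $G\tilde h$, and the $\SUC$ inequality on $Y$, all of which were emphasized above to be checkable on any compactification from which the function comes --- the membership of $h$ in $\WAP(G)$, $\Asp(G)$ or $\SUC(G)$ is equivalent to the membership of $\hat h$ in the corresponding $u$-algebra on $M$. Combining this with the Corollary yields $\WAP(G)=\Asp(G)=\SUC(G)$.

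The step I expect to demand the most care is this last identification: one must make precise that the cyclic $G$-space of $h$ in $\CC(G)$ and that of $\hat h$ in $\CC(\widehat{G}_L)$ are the same compact $G$-space together with the same distinguished extension. This is exactly the content of the correspondence in Remark~\ref{compactifications of G} (every $u$-compactification of $G$ factoring through $\widehat{G}_L$), combined with the fact --- recorded earlier for $\Asp$ and $\SUC$, and following from Grothendieck's criterion for $\WAP$ --- that these dynamical classes may be tested on any compactification from which the function comes. Once this is in place, no dynamical computation beyond the structure-level Corollary is needed.
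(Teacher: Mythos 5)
Your proposal is correct and follows essentially the same route as the paper: realize $G=\Aut(\widehat{G}_L)$ with $\widehat{G}_L$ $\aleph_0$-categorical, apply the structure-level corollary, and transport along $\RUC_u(\widehat{G}_L)\simeq\UC(G)$ using Remark~\ref{compactifications of G} and the fact that $\WAP$, $\Asp$ and $\SUC$ can be tested on any compactification from which the function comes. The paper's proof is just a terser version of the same argument.
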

\begin{proof} From Remark \ref{compactifications of G} we can deduce that the isomorphism $\RUC_u(\widehat{G}_L)\simeq\UC(G)$ preserves $\WAP$ and $\SUC$ functions. Thus if $f\in\SUC(G)$ then its continuous extension $\tilde{f}$ to $\widehat{G}_L$ is $\SUC$, so by the previous corollary $\tilde{f}\in\WAP(\widehat{G}_L)$; hence $f\in\WAP(G)$.
\end{proof}

For the case of Asplund functions we can give a slight generalization, which applies for example to any $M$-definable predicate in an approximately $\aleph_0$-saturated separable structure.

If $c$ is an $n$-tuple (of tuples) and $I$ a is an $n$-tuple of intervals of $\RR$, let us write $f(c,d)\in I$ instead of $(f(c_k,d))_{k<n}\in\prod_{k<n}I_k$. Let us call a formula $f(x,y)$ \emph{Asplund on a subset $A\subset M^\alpha$} of a metric structure $M$ if it \emph{lacks} the following property: (SP) There exist $\epsilon>0$ and a set $B$ in some elementary extension of $M$ such that, if $f(c,d)\in I$ for some $d\in B$, some tuple $c$ from $A$ and some tuple $I$ of open intervals of $\RR$, then there are $b,b'\in B$, $a\in A$ with $f(c,b),f(c,b')\in I$ and $|f(a,b)-f(a,b')|\geq\epsilon$. This makes a function $f_a\in\DEF(M)$ Asplund in the topological sense if and only if $f(x,y)$ is Asplund on the orbit $Ga$, or on its closure $[a]$.

\begin{prop}\label{Asp prop} Let $M$ be a separable $\emptyset$-saturated structure. Let $f(x,y)$ be a formula and $a\in M^\alpha$ a parameter, and suppose that the closed orbit $[a]$ is type-definable. If $f_a\in\Asp(M)$, then $f_a\in\WAP(M)$.
\end{prop}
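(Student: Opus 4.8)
The plan is to prove the contrapositive: assuming $f(x,y)$ is not stable on $[a]$ (equivalently, by Fact~\ref{stable is WAP}, that $f_a\notin\WAP(M)$), I will show $f_a\notin\Asp(M)$. I would work directly with fragmentability rather than with the property (SP): $f_a\in\Asp(M)$ means the extension $\tilde f_a$ is a fragmented function on the cyclic $G$-space $X_{f_a}=S_f(Ga)$. Since $M$ is separable, the orbit $Ga$ is separable and the generating family $\{f_{a'}:a'\in Ga\}$ is norm-separable, so $X_{f_a}$ is compact \emph{metrizable}. Hence it suffices to produce $\epsilon>0$ and a nonempty closed set $Q\subseteq X_{f_a}$ on which every nonempty relatively open piece has $G$-oscillation at least $\epsilon$; this contradicts the fragmentability of $\tilde f_a$.

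Because $[a]$ is type-definable and $M$ is $\emptyset$-saturated, I can feed the order property into Fact~\ref{sequence indexed by Q}, obtaining $(a_i)_{i\in\QQ}\subseteq[a]$, elements $(b_j)_{j\in\RR}$ in an elementary extension, and reals $r\neq s$ with $f(a_i,b_j)=r$ for $i<j$ and $f(a_i,b_j)=s$ for $j\leq i$. I set $p_j=\tp_f(b_j/Ga)\in X_{f_a}$ and $P=\{p_j:j\in\RR\}$. Each map $p\mapsto f(a_i,p)$ is continuous on $X_{f_a}$ (a uniform limit of coordinate functions $p\mapsto f(ga,p)$ with $ga\to a_i$), and on the dense set $P$ it takes only the two values $r,s$; hence it is $\{r,s\}$-valued on all of $\overline P$, and the sets $X_i^s=\{q\in\overline P:f(a_i,q)=s\}$ form an increasing chain of clopen subsets. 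The resulting cut map $c\colon\overline P\to K$, $c(q)=\{i\in\QQ:f(a_i,q)=s\}$, into the compact metrizable space $K$ of final segments of $\QQ$, is then a continuous surjection onto an uncountable set, since $c(p_j)=\{i:i\geq j\}$ distinguishes distinct reals $j$.

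The key observation is that the order parameters $a_i$ detect oscillation \emph{exactly} through $c$: if $c(q)\neq c(q')$, then some $i$ lies in one cut but not the other, so $|f(a_i,q)-f(a_i,q')|=|r-s|$, and approximating $a_i\in[a]=\overline{Ga}$ by elements of $Ga$ shows $q,q'$ have $G$-oscillation at least $|r-s|$. I therefore want a perfect set $Q\subseteq\overline P$ on which $c$ is injective; then $c|_Q$ is a homeomorphism onto a Cantor subset of $K$, every relatively open piece of $Q$ contains two points with distinct cuts, and the desired bound follows with $\epsilon=|r-s|/2$. Such a $Q$ is produced by the standard Cantor-scheme lifting of a continuous surjection of compact metric spaces onto an uncountable target: I build a binary tree of nonempty closed subsets of $\overline P$ whose $c$-images are pairwise disjoint with diameters tending to $0$, and take the intersection.

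I expect the main obstacle to be precisely the difficulty that this lifting circumvents. If $f$ also has the independence property, many limit types accumulating at a single cut need not be separated by the $a_i$ at all, so $c$ can be wildly non-injective and the naive candidate $Q=\overline P$ need not have uniform oscillation; the order parameters alone fail to fragment-obstruct on such fibers. Selecting a Cantor set on which $c$ is injective, i.e.\ one representative per cut, is what renders the argument uniform over all unstable formulas, whether or not they are dependent, and it is exactly the step where separability (through the metrizability of $X_{f_a}$) is genuinely needed.
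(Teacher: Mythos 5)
Your argument is correct, and it shares the paper's skeleton --- the contrapositive, Fact \ref{sequence indexed by Q} (using exactly the hypotheses that $[a]$ is type-definable and $M$ is $\emptyset$-saturated), and separability entering through second countability --- but the final verification of non-Asplundness runs along a genuinely different route. The paper never topologizes: it checks its ad hoc reformulation (SP) directly on the raw set $B=\{b_j\}_{j\in\RR}$ of realized types. There the analogue of your cut map is injective for free (distinct reals $j<j'$ are separated by any rational $i$ between them, giving $|f(a_i,b_j)-f(a_i,b_{j'})|=|r-s|$), and since SP quantifies over \emph{arbitrary} subsets --- a piece need only contain two points of $B$ --- discarding the countably many exceptional reals pinned down by the countably many pieces $(c,I)$ finishes in one pass, with no closures and no perfect sets. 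You instead verify fragmentability in its textbook form on the metrizable cyclic space $X_{f_a}=S_f(Ga)$, and because limit types in $\overline{P}$ may collapse under the cut map $c$, you pay with the Cantor-scheme extraction; in exchange you obtain a stronger and more geometric witness (a Cantor set in $S_f(Ga)$ every relatively open piece of which has $G$-oscillation at least $|r-s|$). Note that since fragmentability of the orbit family already fails on an arbitrary, not necessarily closed, subset, you too could have taken $B=\{p_j\}_{j\in\RR}$ itself and skipped the perfect-set step entirely --- that is essentially the paper's proof in your notation. If you write your version up, make two points explicit: in the Cantor scheme you must shrink the diameters of the sets $F_\sigma$ themselves, not only of their $c$-images, since otherwise $Q$ could have an isolated point or a relatively open piece contained in a single fiber of $c$, where the parameters $a_i$ detect no oscillation; and the splitting step needs the invariant that $c(F_\sigma)$ remains uncountable (split at two condensation points of the image), which is what keeps the construction alive at every level.
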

\begin{proof} Suppose $f(x,y)$ has the order property on $[a]$. Let $(a_i)_{i\in\QQ}\subset [a]$, $(b_j)_{j\in\RR}$ and $r,s\in\RR$ be as given by Fact \ref{sequence indexed by Q}. Since $M$ is separable, it is enough to check the condition SP for a countable family $C$ of pairs $(c,I)$. There is at most a countable number of reals $l$ such that, for some $(c,I)\in C$, we have $f(c,b_j)\in I$ if and only if $j=l$. So by throwing them away we may assume that, whenever $f(c,b_l)\in I$, $(c,I)\in C$, there is $j\neq l$ with $f(c,b_j)\in I$; if we then choose $i\in\QQ$ lying between $l$ and $j$, we have $|f(a_i,b_l)-f(a_i,b_j)|=|r-s|$. Hence $f(x,y)$ has SP for $\epsilon=|r-s|$ and $B=\{b_j\}$.\end{proof}

The previous proposition can be used to get information about certain continuous functions on some (non Roelcke precompact) Polish groups, but not via the structure $M=\widehat{G_L}$, which in general is not $\emptyset$-saturated as mentioned in Remark \ref{GL is not saturated}. Instead, it may be applied to automorphism groups of saturated structures and functions of the form $g\mapsto f(a,gb)$.

\begin{example} Let us consider the linearly ordered set $M=(\ZZ,<)$ (which, as a $G$-space, can be identified with its automorphism group, $G=\ZZ$). The basic predicate is given by $P_<(x,y)=0$ if $x<y$, $P_<(x,y)=1$ otherwise. The indicator function of the non-positive integers, $f=\mathds{1}_{\ZZ_{\leq 0}}\in\CC(\ZZ)$, is an $M$-definable predicate, $f(y)=P_<(0,y)$. It is clearly not in $\WAP(\ZZ)$. However, it comes from the two-point compactification $X=\ZZ\cup\{-\infty,+\infty\}$, and it is easy to check that its extension to $X$ satisfies the fragmentability condition, whence in fact $f\in\Asp(\ZZ)$ (more generally, see \cite{glameg06}, Corollary 10.2). Of course, $M$ is not $\emptyset$-saturated.

On the other hand, we can consider the linearly ordered set $N=\bigsqcup_{i\in(\QQ,<)}(\ZZ,<)_i$ (where each $(\ZZ,<)_i$ is a copy of $(\ZZ,<)$), which is a $\emptyset$-saturated elementary extension of $M$ (say $M=(\ZZ,<)_0$). The automorphism group of $N$ is $G=\ZZ^\QQ\rtimes\Aut(\QQ,<)$. As an $M$-definable predicate on $N$, $f$~is the indicator function of the set of elements of $N$ that are not greater than $0\in M\subset N$. As before, $f\notin\WAP(N)$, but then by Proposition \ref{Asp prop} we have $f\notin\Asp(N)$ either (note that the orbit of $0$ is~$N$). As per Proposition \ref{Form=UC}, the function $h\colon g\mapsto f(g(0))$ is in $\UC(G)$. Since the continuous $G$-map $g\in G\mapsto g(0)\in N$ is surjective, any compactification of $N$ induces a compactification of $G$. It follows that $h\in\UC(G)\setminus\Asp(G)$. (However, here one can also adapt the argument of Example~\ref{ejemplo de Q} to show that in fact~$f\notin\SUC(N)$ and hence $h\notin\SUC(G)$.)\end{example}

\noindent\hrulefill
\section{$\Tame\cap\UC = \NIP = \Null\cap\UC$}\label{tame=nip}

Tame functions have been studied by Glasner and Megrelishvili in \cite{glameg12}, after the introduction of tame dynamical systems by K\"ohler \cite{koh95} (who called them \emph{regular systems}) and later by Glasner in \cite{gla06}. If the translation of Ben Yaacov and Tsankov for Roelcke precompact Polish groups identifies $\WAP$ functions with stable formulas, we remark in this section that tame functions correspond to \emph{$\NIP$} (or \emph{dependent}) formulas. The study of this model-theoretic notion, a generalization of local stability introduced by Shelah \cite{she71}, is an active and important domain of research, mainly in the classical first-order setting ---though, as the third item of the following proposition points out, the notion has a very natural metric presentation.

\begin{prop}\label{NIP} Let $M$ be a $\emptyset$-saturated structure. Let $f(x,y)$ be a formula and $A\subset M^\alpha$, $B\subset M^\beta$ be definable sets. The following are equivalent; in any of these cases, we will say that $f(x,y)$ is $\NIP$ on $A\times B$.
\begin{enumerate}
\item There do not exist real numbers $r\neq s$, a sequence $(a_i)_{i<\omega}\subset A$ and a family $(b_I)_{I\subset\omega}$ in some elementary extension, with $P_B(b_I)=0$ for all $I\subset\omega$, such that for all $i<\omega$, $I\subset\omega$, $$f(a_i,b_I)=r\text{ if }i\in I\text{ and }f(a_i,b_I)=s\text{ if }i\notin I.$$
\item For every indiscernible sequence $(a_i)_{i<\omega}\subset A$ and every $b\in B$ (equivalently, for every $b$ in any elementary extension satisfying $P_B(b)=0$), the sequence $(f(a_i,b))_{i<\omega}$ converges in~$\RR$.
\item Every sequence $(a_i)_{i<\omega}\subset A$ admits a subsequence $(a_{i_j})_{j<\omega}$ such that $(f(a_{i_j},b))_{j<\omega}$ converges in~$\RR$ for any $b$ in any elementary extension satisfying $P_B(b)=0$.
\end{enumerate}
\end{prop}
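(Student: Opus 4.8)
The plan is to prove the equivalence of the three formulations of $\NIP$ by establishing $(1)\Rightarrow(2)\Rightarrow(3)\Rightarrow(1)$, using saturation and Ramsey-type combinatorics much as in Proposition \ref{AP formulas}. The key technical input is the model-theoretic characterization of the \emph{independence property} via an indiscernible sequence whose elements can be independently ``coded'' on both values $r$ and $s$ by suitable parameters $b_I$. Throughout I will freely replace ``$b\in B$'' by ``$b$ in an elementary extension with $P_B(b)=0$'', since $B$ is definable and $M$ is $\emptyset$-saturated; these agree for the purposes of approximate realization.

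First I would prove $(2)\Rightarrow(3)$, which is essentially automatic: given any sequence $(a_i)\subset A$, compactness of the type space $S^M_x(\emptyset)$ together with Ramsey's theorem (applied exactly as in the proof of Proposition \ref{AP formulas}, extracting $\Delta_n$-$\delta_n$-indiscernible subsequences and diagonalizing) yields an indiscernible subsequence $(a_{i_j})$; then hypothesis $(2)$ gives convergence of $(f(a_{i_j},b))_j$ for every $b$ realizing $P_B=0$. The implication $(3)\Rightarrow(2)$ is equally direct: an indiscernible sequence is already ``homogeneous'', so if some subsequence $(f(a_{i_j},b))_j$ converges then by indiscernibility the full sequence $(f(a_i,b))_i$ converges to the same limit (any two terms $f(a_i,b)$, $f(a_{i'},b)$ are controlled by the corresponding terms in the convergent subsequence). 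Hence $(2)\Leftrightarrow(3)$ reduces to a clean application of Ramsey plus the definition of indiscernibility.

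The substantive implications are the negations linking $(1)$ and $(2)$. For $\neg(2)\Rightarrow\neg(1)$: if $(a_i)$ is indiscernible and $(f(a_i,b))_i$ fails to converge for some $b$ with $P_B(b)=0$, then the bounded sequence has two distinct subsequential limits $r\neq s$; by indiscernibility one can, for each finite pattern, find parameters realizing $f(a_i,\cdot)\approx r$ on a prescribed finite subset and $\approx s$ off it. I would then write down the conditions in the variables $(y_I)_{I\subset\omega}$ asserting $f(a_i,y_I)=r$ for $i\in I$ and $f(a_i,y_I)=s$ for $i\notin I$, together with $P_B(y_I)=0$, show they are approximately finitely realized using the indiscernibility and the two cluster values, and invoke $\emptyset$-saturation (passing to an elementary extension for the $b_I$) to realize them all simultaneously. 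This reproduces the pattern forbidden in $(1)$. Conversely, for $\neg(1)\Rightarrow\neg(2)$, given the family $(a_i)$, $(b_I)$ witnessing the independence pattern, I would first replace $(a_i)$ by an indiscernible sequence extracted from it (again by Ramsey and saturation, using conditions that preserve the existence of the coding parameters $b_I$), so as to obtain an \emph{indiscernible} sequence still admitting, for every $I\subset\omega$, a $b_I$ with $P_B(b_I)=0$ and $f(a_i,b_I)=r\Leftrightarrow i\in I$; taking $I$ to be an infinite-coinfinite set then produces a single parameter $b$ along which $(f(a_i,b))_i$ oscillates between $r$ and $s$, violating $(2)$.

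The main obstacle I anticipate is the extraction step in $\neg(1)\Rightarrow\neg(2)$: one must pass to an indiscernible subsequence of the $(a_i)$ while simultaneously retaining the full independence-coding power, i.e.\ the existence of coding parameters $b_I$ for \emph{all} subsets $I$ of the new index set. The standard route is to express, in a saturated elementary extension, the joint type of the $a_i$ together with the family of conditions guaranteeing a coding $b_I$ for each finite approximation, and then extract indiscernibles from $(a_i)$ in this richer language (encoding the existential statements $\exists y\,(P_B(y)=0\wedge f(a_{i},y)\approx r \text{ or } s$ over finite patterns$)$ as definable predicates, so that indiscernibility transfers the coding to the limit sequence). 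Making this bookkeeping precise---ensuring the finite-pattern coding survives the Ramsey extraction and lifts, via saturation, to a genuine family $(b_I)_{I\subset\omega}$---is where care is required; the remaining reductions are routine given the earlier facts on saturation, indiscernibility and approximate realization.
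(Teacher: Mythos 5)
Your $\neg(2)\Rightarrow\neg(1)$ step is essentially the paper's proof of $(1)\Rightarrow(2)$ (two cluster values $r,s$ made exact by saturation, then each $b_I$ obtained by approximate finite realization using indiscernibility), and is fine modulo realizing the $b_I$ one at a time rather than as a single type in uncountably many variables $(y_I)_{I\subset\omega}$. But two of your other steps have genuine gaps. First, in $(2)\Rightarrow(3)$ you claim that Ramsey plus diagonalization extracts a \emph{genuinely indiscernible} subsequence of an arbitrary $(a_i)\subset A$, to which $(2)$ can then be applied. This is false in general: Ramsey only yields $\Delta$-$\delta$-indiscernible subsequences for finite $\Delta$ and $\delta>0$, and the diagonal sequence need not be indiscernible. (In a saturated model with unary predicates $P_n$ and a sequence satisfying $P_n(a_i)$ iff $i=n$, no subsequence whatsoever is indiscernible.) The paper inserts the missing step: it first derives from $(2)$, by saturation, a \emph{finitary} statement --- for each $\epsilon$ there are $\Delta,\delta$ such that along any $\Delta$-$\delta$-indiscernible sequence in $A$ the values $f(a_i,b)$ are eventually $\epsilon$-close, uniformly in $b$ with $P_B(b)=0$ --- and it is this finitary version, not $(2)$ itself, that is applied to the Ramsey-extracted diagonal. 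Second, your direct argument for $(3)\Rightarrow(2)$, namely that ``if some subsequence $(f(a_{i_j},b))_j$ converges then by indiscernibility the full sequence converges to the same limit,'' is invalid: indiscernibility is a property of the $a_i$ alone and places no constraint on $f(a_i,b)$ for a fixed external parameter $b$; an indiscernible sequence along which $f(\cdot,b)$ oscillates while converging on a subsequence is precisely the IP configuration (pairwise non-adjacent vertices of the random graph and a vertex $b$ adjacent to exactly the even-indexed ones). The implication is true, but not for this reason.

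The paper closes the cycle as $(1)\Rightarrow(2)$, $(2)\Rightarrow(3)$, $(3)\Rightarrow(1)$, where $(3)\Rightarrow(1)$ is a two-line argument: apply $(3)$ to the IP witnesses and evaluate at $b_J$ for $J$ infinite and coinfinite in the index set of the subsequence, so that $(f(a_{i_j},b_J))_j$ would have to converge to both $r$ and $s$. Your alternative $\neg(1)\Rightarrow\neg(2)$ --- extracting an indiscernible sequence that retains the coding parameters --- can be made to work (realize the EM-type of the witnesses, pull it back into $A$ by $\emptyset$-saturation, and recover the $b_I$ by compactness from the finite coding patterns recorded as $\inf_{y\in B}$-predicates), but this is exactly the bookkeeping you flag as delicate, and once $(2)\Rightarrow(3)$ is repaired it is rendered unnecessary by the cheap implication $(3)\Rightarrow(1)$.
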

\begin{proof} $(1)\Rightarrow(2)$. Let $(a_i)_{i<\omega}\subset A$ be indiscernible, $b$ arbitrary with $P_B(b)=0$. If $(f(a_i,b))_{i<\omega}$ does not converge, there exist reals $r\neq s$ such that (replacing $(a_i)_{i<\omega}$ by a subsequence) $f(a_{2i},b)\to r$, $f(a_{2i+1},b)\to s$. By $\emptyset$-saturation we may assume that $f(a_{2i},b)=r$ and $f(a_{2i+1},b)=s$ for all $i$. Given $I\subset\omega$, take a strictly increasing function $\tau\colon \omega\to\omega$ such that $\tau(i)$ is even if and only if $i$ is in $I$. By indiscernibility, the set of conditions $$\{f(a_i,y)=t:t\in\{r,s\},\ f(a_{\tau(i)},b)=t\},\ P_B(y)=0,$$ is approximately finitely realized in $M$; take $b_I$ to be a realization in some model. Thus, for all $i$ and $I$, $f(a_i,b_I)=r$ if $i\in I$ and $f(a_i,b_I)=s$ if $i\notin I$, contradicting $(1)$.

$(2)\Rightarrow(3)$. We claim that for every $\epsilon>0$ there are some $\delta>0$ and a finite set of formulas $\Delta$ such that, for any $b$ with $P_B(b)=0$ and every $\Delta$-$\delta$-indiscernible sequence $(a_i)_{i<\omega}\subset A$, there exists $N<\omega$ with $|f(a_i,b)-f(a_j,b)|<\epsilon$ for all $i,j\geq N$. Otherwise, there are $\epsilon>0$ and, for any $\Delta$, $\delta$ as before, a $\Delta$-$\delta$-indiscernible sequence $(a_i)_{i<\omega}\subset A$ and a tuple $b$ with $P_B(b)=0$ such that $|f(a_{2i},b)-f(a_{2i+1},b)|\geq\epsilon$ for all $i<\omega$. By $\emptyset$-saturation, we can assume that $(a_i)_{i<\omega}$ is indiscernible and $b\in B$. Then, by $(2)$, the sequence $(f(a_i,b))_{i<\omega}$ should converge, but cannot. The claim follows.

Now suppose that $\Delta_n$, $\delta_n$ correspond to $\epsilon=1/n$ as per the previous claim. Given any sequence $(a^n_i)_{i<\omega}$ we can extract, using Ramsey's theorem, a $\Delta_{n+1}$-$\delta_{n+1}$-indiscernible subsequence $(a^{n+1}_i)_{i<\omega}$. As in the proof of Proposition \ref{AP formulas}, starting with any $(a_i)_{i<\omega}=(a^0_i)_{i<\omega}$, proceeding inductively and taking the diagonal, we get a subsequence $(a_{i_j})_{j<\omega}$ such that $(f(a_{i_j},b))_{j<\omega}$ converges for any $b$ satisfying $P_B(b)=0$.

$(3)\Rightarrow(1)$. Assume we have $(a_i)_{i<\omega}$, $(b_I)_{I\subset\omega}$ and $r,s$ contradicting $(1)$.
If $(a_{i_j})_{j<\omega}$ is as given by $(3)$ and $J\subset\omega$ is infinite and coinfinite in $\{i_j:j<\omega\}$, then $(f(a_{i_j},b_J))_{j<\omega}$ converges to both $r$ and $s$, a contradiction.
\end{proof}

A subset of a topological space is said \emph{sequentially precompact} if every sequence of elements of the subset has a convergent subsequence; we can restate the third item of the previous proposition in the following manner.

\begin{cor}\label{NIPcorollary} Let $M$ be $\emptyset$-saturated and $A\subset M^\alpha$, $B\subset M^\beta$ be definable sets. A formula $f(x,y)$ is $\NIP$ on $A\times B$ if and only if $\{\tilde{f}_a|_{B^*}:a\in A\}$ is sequentially precompact in $\RR^{B^*}$, where $\tilde{f}_a|_{B^*}$ is the extension of $f_a$ to $B^*=\{p\in S_y(M):P_B\in p\}$. If $A'\subset A$ is a dense subset, it is enough to check that $\{\tilde{f}_a|_{B^*}:a\in A'\}$ is sequentially precompact in $\RR^{B^*}$.\end{cor}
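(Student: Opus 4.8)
The plan is to recognize that the corollary is essentially a topological restatement of item (3) of Proposition \ref{NIP}, supplemented by a routine approximation argument for the density reduction. First I would make explicit the dictionary between $B^*$ and realizations. A type $p\in B^*=\{p\in S_y(M):P_B\in p\}$ is realized by some element $b$ (in a suitable elementary extension, which exists by the Fact providing extensions realizing all types over $M$) with $P_B(b)=0$, and for $a\in A$ one has $\tilde f_a(p)=f(a,b)$, a quantity depending only on $p$. Since the topology on $\RR^{B^*}$ is that of pointwise convergence, a subsequence $(\tilde f_{a_{i_j}}|_{B^*})_j$ converges in $\RR^{B^*}$ exactly when $(f(a_{i_j},b))_j$ converges for every $b$ in every elementary extension satisfying $P_B(b)=0$; as $f$ is bounded, these limits automatically lie in $\RR$, so no value escapes to infinity. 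Unwinding the definition of sequential precompactness---every sequence drawn from the family has a convergent subsequence---this is precisely condition (3), which yields the first equivalence.

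For the second assertion I would exploit the uniform continuity of $f$ in the variable $x$. If $\Delta_f$ is a corresponding modulus, then $d(a,a')\le\Delta_f(\epsilon)$ forces $|f(a,b)-f(a',b)|\le\epsilon$ uniformly in $b$, and passing to types (formulas extend to elementary extensions with the same modulus, so $\tilde f_a$ and $\tilde f_{a'}$ stay close on all of $S_y(M)$) gives $\|\tilde f_a|_{B^*}-\tilde f_{a'}|_{B^*}\|_\infty\le\epsilon$. Thus $a\mapsto\tilde f_a|_{B^*}$ is uniformly continuous from $A$ into $(\ell^\infty(B^*),\|\cdot\|_\infty)$. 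Given an arbitrary sequence $(a_i)_{i<\omega}\subset A$, I would use the density of $A'$ to pick $a_i'\in A'$ with $\|\tilde f_{a_i}|_{B^*}-\tilde f_{a_i'}|_{B^*}\|_\infty<1/i$; sequential precompactness of the $A'$-family then furnishes a subsequence along which $\tilde f_{a_{i_j}'}|_{B^*}$ converges pointwise to some $g\in\RR^{B^*}$, and the triangle inequality
\[
|\tilde f_{a_{i_j}}(p)-g(p)|\le 1/i_j+|\tilde f_{a_{i_j}'}(p)-g(p)|
\]
shows that $\tilde f_{a_{i_j}}|_{B^*}\to g$ pointwise as well. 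Hence the full $A$-family is sequentially precompact, and checking the condition on the dense set $A'$ suffices.

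I do not anticipate a genuine obstacle: the whole content lies in correctly identifying pointwise convergence over $B^*$ with convergence of the values $f(a_i,b)$, which is a matter of tracing through the definitions of $B^*$ and of $\tilde f_a$. The only mildly delicate point is the density reduction, where one must ensure that the sup-norm approximation survives the passage from $M$ to the type space; this is exactly what the uniform continuity of $f$ in $x$, extended to $S_y(M)$ by continuity of $\tilde f$, guarantees.
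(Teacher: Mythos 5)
Your proposal is correct and follows essentially the same route as the paper, which presents the corollary as a direct restatement of item (3) of Proposition \ref{NIP} via the identification of points of $B^*$ with realizations $b$ satisfying $P_B(b)=0$ in elementary extensions. The density reduction you supply---uniform continuity of $f$ in $x$ giving sup-norm approximation of $\tildeف f_{a}|_{B^*}$ by $\tilde f_{a'}|_{B^*}$ for $a'\in A'$ close to $a$, followed by a diagonal/triangle-inequality argument---is exactly the routine argument the paper leaves implicit.
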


The proposition and corollary hold true, with the same proof and the obvious adaptations regarding $P_B$, if $A$ and $B$ are merely type-definable. In the literature, a formula in a given theory is said simply \emph{$\NIP$} if the previous conditions are satisfied on $M^\alpha\times M^\beta$ for some saturated model $M$ of the theory.

We turn to the topological side. Tame dynamical systems were originally introduced in terms of the enveloping semigroup of a dynamical system, and admit several equivalent presentations. The common theme are certain dichotomy theorems that have their root in the fundamental result of Rosenthal \cite{ros74}: a Banach space either contains an isomorphic copy of $\ell_1$ or has the property that every bounded sequence has a weak-Cauchy subsequence.

A Banach space is thus called \emph{Rosenthal} if it contains no isomorphic copy of $\ell_1$. Then, a continuous function $f\in\CC(X)$ on an arbitrary $G$-space is \emph{tame} if it comes from a Rosenthal-representable compactification of $X$. See \cite{glameg12}, Definition 5.5 and Theorem 6.7. See also Lemma 5.4 therein and the reference after Definition 5.5 to the effect that the family $\Tame(X)$ of all tame functions on $X$ forms a uniformly closed $G$-invariant subalgebra of $\RUC(X)$. For metric $X$ we shall mainly consider the restriction $\Tame_u(X)=\Tame(X)\cap\RUC_u(X)$, as per Note \ref{metric setting}.

From Proposition 5.6 and Fact 4.3 from \cite{glameg12} we have the following characterization of tame functions on compact systems.

\begin{fact}\label{tame} A function $f\in C(Y)$ on a compact $G$-space $Y$ is tame if and only if every sequence of functions in the orbit $Gf$ admits a weak-Cauchy subsequence or, equivalently, if $Gf$ is sequentially precompact in $\RR^Y$.\end{fact}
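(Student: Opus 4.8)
The plan is to derive both characterizations from Rosenthal's $\ell_1$ theorem \cite{ros74} together with the representation-theoretic definition of tameness. I would begin with the innocuous ``equivalently'' clause. For a \emph{bounded} sequence $(f_n)$ in $C(Y)$, being weak-Cauchy means that $\langle f_n,\mu\rangle$ converges for every $\mu\in C(Y)^*$; testing against the Dirac masses $\delta_y$ forces pointwise convergence, while conversely, if $(f_n)$ is uniformly bounded and converges pointwise, dominated convergence gives $\int f_n\,d\mu\to\int f\,d\mu$ for every finite signed measure $\mu$, so $(f_n)$ is weak-Cauchy. Since the orbit $Gf$ is norm-bounded (the $G$-action on $C(Y)$ is by isometries), ``every sequence in $Gf$ has a weak-Cauchy subsequence'' is thus the same as ``every sequence in $Gf$ has a pointwise-convergent subsequence'', i.e.\ that $Gf$ is sequentially precompact in $\RR^Y$ (the pointwise limit being an arbitrary, not necessarily continuous, element of $\RR^Y$). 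This settles the equivalence of the two right-hand conditions.

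For the forward implication, suppose $f$ comes from a Rosenthal-representable compactification; unwinding the definition, there are a Rosenthal Banach space $V$, a continuous homomorphism $h\colon G\to\Iso(V)$, a weak$^*$-continuous $G$-map $\alpha\colon Y\to V^*$ and a vector $v_0\in V$ with $\tilde f(y)=\langle v_0,\alpha(y)\rangle$. A direct computation using that $\alpha$ is a $G$-map then shows $(gf)(y)=\langle h(g)v_0,\alpha(y)\rangle$, so each translate $gf$ is represented by the vector $h(g)v_0$. The orbit $\{h(g)v_0:g\in G\}$ is norm-bounded, and because $V$ contains no isomorphic copy of $\ell_1$, Rosenthal's theorem guarantees that every sequence in it has a weak-Cauchy subsequence. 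Pairing such a subsequence against the functionals $\alpha(y)\in V^*$ yields pointwise convergence on $Y$, that is, a weak-Cauchy subsequence of the corresponding sequence in $Gf$. Hence $Gf$ is sequentially precompact in $\RR^Y$.

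The reverse implication is where the real work lies, and it is the step I expect to be the main obstacle. Here one is handed sequential precompactness of $Gf$ and must \emph{produce} a Rosenthal-representable compactification from which $f$ comes. The natural candidate is the cyclic $G$-space $Y_f$ together with a canonical representation of $(G,Y_f)$ on a Banach space $V_f$ built from $f$ (the dynamical analogue of the Davis--Figiel--Johnson--Pe\l{}czy\'nski factorization, as developed by Glasner and Megrelishvili in \cite{glameg12}). One then has to show that $V_f$ is Rosenthal. I would argue by contraposition through Rosenthal's dichotomy: were $V_f$ to contain $\ell_1$, Rosenthal's combinatorial characterization would produce a sequence in $Gf$ that is \emph{independent} in his sense, and such a sequence admits no pointwise-convergent subsequence, contradicting sequential precompactness. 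Making this transfer precise---matching an $\ell_1$-sequence inside the representing space with a genuinely independent (hence non-weak-Cauchy) sequence of translates of $f$ on $Y$, and, in the other direction, controlling the construction of $V_f$ so that sequential precompactness of $Gf$ really does pass to the space---is the delicate part, and is exactly where the Bourgain--Fremlin--Talagrand theory of Rosenthal compacta and the precise form of the representation of \cite{glameg12} are needed.
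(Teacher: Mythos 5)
A preliminary but important observation: the paper does not prove this statement. It is labelled a Fact and imported wholesale from Glasner and Megrelishvili, the entire justification being the citation of Proposition~5.6 and Fact~4.3 of \cite{glameg12}. So there is no internal argument to measure yours against; the only question is whether your reconstruction is sound. Your handling of the ``equivalently'' clause is correct and complete: $Gf$ is norm-bounded because $G$ acts on $C(Y)$ by isometries, a pointwise limit of continuous functions is Baire class~1 and hence integrable against every member of $C(Y)^*$, and Dirac masses plus dominated convergence give the two implications. Your forward implication is also essentially the right argument, but ``unwinding the definition'' conceals a real step: a function coming from a Rosenthal-representable compactification $\nu\colon Y\to Z$ is by definition of the form $\tilde f\circ\nu$ with $\tilde f\in C(Z)$ and $Z$ weak$^*$-embedded in the dual of a Rosenthal space; it is not immediate that $\tilde f$ is a matrix coefficient $\langle v_0,\alpha(\cdot)\rangle$ of that (or any) Rosenthal representation. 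That identification is itself Theorem~6.7 of \cite{glameg12}, so even here you are already leaning on the cited machinery rather than on the bare definition.

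The genuine gap is the converse. What you give is an accurate map of how the proof goes --- pass to the cyclic space $Y_f$, run the DFJP-type factorization of \cite{glameg12}, and exclude $\ell_1$ via Rosenthal's independence criterion and the Bourgain--Fremlin--Talagrand theory --- but, as you yourself concede, the delicate transfer between independent sequences in $Gf$ and copies of $\ell_1$ in the representing space is left entirely open. The two results that would close it are precisely Proposition~5.6 and Fact~4.3 of \cite{glameg12}, i.e.\ the citations the paper uses in place of a proof. For the purposes of this paper that is acceptable, since the statement is quoted rather than proved; but you should be clear that your argument for the hard direction is a pointer to the literature, not a proof, and that nothing in your sketch would survive if those external results were removed.
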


\begin{rem}\label{Y to Z} A direct consequence of this characterization is the following property: if $j\colon Y\to Z$ is a compactification between compact $G$-spaces and $h\in\CC(Z)$, then $h\in\Tame(Z)$ if and only if $hj\in\Tame(Y)$, which says that a function on an arbitrary $G$-space $X$ is tame if and only if all (or any) of its extensions to compactifications of $X$ are tame. (We had already pointed out the same property for $\AP$, $\WAP$, Asplund and $\SUC$ functions.) In fact, observe that the property holds true if $j\colon Y\to Z$ is just a continuous $G$-map with dense image between arbitrary $G$-spaces, since in this case $j$ induces a compactification $j_h\colon Y_{hj}\to Z_h$ between the corresponding (compact) cyclic $G$-spaces.\end{rem}

The link with $\NIP$ formulas is then immediate.

\begin{prop}\label{tameNIP} Let $M$ be an $\aleph_0$-categorical structure. Then $h\in\Tame_u(M)$ if and only if $h=f_a$ for a formula $f(x,y)$ that is $\NIP$ on $[a]\times M$. More generally, if $f(x,y)$ is a formula, $a\in M^\alpha$, and $B\subset M^\beta$ is definable, we have $f_a|_B\in\Tame(B)$ if and only if $f(x,y)$ is $\NIP$ on $[a]\times B$.
\end{prop}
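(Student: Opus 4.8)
The plan is to reduce the statement to the combination of two facts already available in the excerpt: the topological characterization of tame functions on compact $G$-spaces (Fact~\ref{tame}) and the model-theoretic characterization of $\NIP$ formulas in terms of sequential precompactness (Corollary~\ref{NIPcorollary}). The bridge between them is the identification of the relevant compactification of the orbit $Ga$ (or of the definable set $B$) with a space of types, together with the transfer property from Remark~\ref{Y to Z}. Since $M$ is $\aleph_0$-categorical, it is in particular $\emptyset$-saturated, so the hypotheses of Corollary~\ref{NIPcorollary} are met; moreover $[a]$ is automatically type-definable (indeed definable, being a closed $G$-invariant set), so the version of the corollary for type-definable $A$ applies with $A=[a]$.

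First I would treat the general statement, since the first assertion is the special case $B=M$ (using Proposition~\ref{Form=RUC} to see that $\Tame_u(M)$ consists exactly of the $f_a$ with $f(x,y)$ a formula). Fix a formula $f(x,y)$, a parameter $a\in M^\alpha$, and a definable set $B\subset M^\beta$. By Fact~\ref{tame}, applied to the cyclic compactification, $f_a|_B\in\Tame(B)$ if and only if the orbit $G(f_a|_B)$ is sequentially precompact in $\RR^{B_Y}$, where $B_Y$ is the relevant compactification of $B$ from which $f_a|_B$ comes. The natural choice of compactification is $B^*=\{p\in S_y(M):P_B\in p\}$, the closed subset of the type space $S_y(M)$ cut out by the condition $P_B(y)=0$; the map $\tp\colon B\to B^*$ is a $G$-compactification, and $f_a|_B$ extends to $\tilde f_a|_{B^*}$. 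By the transfer property of Remark~\ref{Y to Z}, tameness of $f_a|_B$ can be checked on this extension, so $f_a|_B\in\Tame(B)$ if and only if $G\,\tilde f_a|_{B^*}$ is sequentially precompact in $\RR^{B^*}$.

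The remaining step is to match the orbit $G\,\tilde f_a|_{B^*}$ with the family $\{\tilde f_{a'}|_{B^*}:a'\in [a]\}$ appearing in Corollary~\ref{NIPcorollary}. For $g\in G$ one has $g\,\tilde f_a=\tilde f_{ga}$ by $G$-invariance of $f$, so $G\,\tilde f_a|_{B^*}=\{\tilde f_{a'}|_{B^*}:a'\in Ga\}$, and since $f(x,y)$ is uniformly continuous the closure of this family in $\RR^{B^*}$ agrees with that of $\{\tilde f_{a'}|_{B^*}:a'\in [a]\}$; as $Ga$ is dense in $[a]$, the last clause of Corollary~\ref{NIPcorollary} lets me check sequential precompactness on the dense subset $Ga$. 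Thus $G\,\tilde f_a|_{B^*}$ is sequentially precompact in $\RR^{B^*}$ if and only if $\{\tilde f_{a'}|_{B^*}:a'\in [a]\}$ is, which by Corollary~\ref{NIPcorollary} is exactly the assertion that $f(x,y)$ is $\NIP$ on $[a]\times B$. Chaining the three equivalences gives $f_a|_B\in\Tame(B)\iff f(x,y)$ is $\NIP$ on $[a]\times B$, and setting $B=M$ yields the first statement.

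I expect the main obstacle to be the careful bookkeeping around the compactification $B^*$: one must confirm that $\tp\colon B\to B^*$ really is a $u$-compactification of $B$ from which $f_a|_B$ comes (so that Remark~\ref{Y to Z} and Fact~\ref{tame} apply to this particular space rather than to the abstract cyclic system), and that the extension $\tilde f_a|_{B^*}$ is the same object as the one named in Corollary~\ref{NIPcorollary}. This requires noting that $B$ definable makes $B^*$ a closed $G$-subspace of $S_y(M)$ and that the continuous extension of $f_a$ to $S_y(M)$ restricts to $\tilde f_a|_{B^*}$. Once the two named spaces are seen to coincide, the equivalence of the two sequential-precompactness conditions is the formal heart of the argument and is immediate from the density of $Ga$ in $[a]$ together with the uniform continuity of $f$.
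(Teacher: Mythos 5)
Your overall route is the same as the paper's: reduce the first claim to the second via Proposition~\ref{Form=RUC}, pass to a type-space compactification of $B$, and chain Fact~\ref{tame} (via Remark~\ref{Y to Z}) with Corollary~\ref{NIPcorollary}. But there is one genuine gap, and it sits exactly where the paper does its only real work. You assert that $\tp\colon B\to B^*$ is a $G$-compactification of $B$, where $B^*=\{p\in S_y(M):P_B\in p\}$. A compactification must have dense image, and the density of $\tp(B)$ in $B^*$ is not automatic: the canonical compactification of $B$ inside $S_y(M)$ is $\overline{B}$, the closure of the image of $B$, whereas $B^*$ consists of all types of elements $b$ in elementary extensions satisfying $P_B(b)=0$ --- a priori a strictly larger set. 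This matters because Corollary~\ref{NIPcorollary} tests sequential precompactness against all of $B^*$ (the definition of $\NIP$ in Proposition~\ref{NIP} quantifies over realizations in elementary extensions), while tameness of $f_a|_B$ is tested against the actual compactification $\overline{B}$ only; if $\overline{B}\subsetneq B^*$ the two sequential precompactness conditions need not coincide. Your closing paragraph identifies the bookkeeping issues as closedness of $B^*$ and compatibility of the extensions, both of which are immediate, but misses this density question.

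The paper closes the gap by proving $\overline{B}=B^*$, and this is where $\aleph_0$-categoricity is used beyond mere $\emptyset$-saturation: given $p\in B^*$, realize it by $b$ in a \emph{separable} elementary extension $M'$, take an isomorphism $\sigma\colon M'\to M$ (which exists by $\aleph_0$-categoricity), and use homogeneity to find $g\in G$ with $g\sigma b\in B$ approximating $p$ against any finitely many formulas and parameters. (The paper notes an alternative: reprove Corollary~\ref{NIPcorollary} with $\overline{B}$ in place of $B^*$.) Once this identification is supplied, your argument is correct; the remaining steps --- $g\tilde f_a=\tilde f_{ga}$, and the passage from $Ga$ to $[a]$ via uniform continuity together with the last clause of Corollary~\ref{NIPcorollary} --- match the paper's intended reading.
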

\begin{proof} The first claim follows from the second by Proposition \ref{Form=RUC}. Fixed $f(x,y)$, $a$ and $B$, the function $f_a\in\RUC_u(B)$ is tame if and only if its extension to $\overline{B}$ is tame, where $\overline{B}$ is the closure in $S_y(M)$ of the image of $B$ under the compactification $M^\beta\to S_y(M)$. Then the second claim follows from Fact \ref{tame} and Corollary \ref{NIPcorollary}, taking $A'=Ga$. For this, one can see that Corollary \ref{NIPcorollary} holds true with $\overline{B}$ instead of $B^*$ or, alternatively, that $\overline{B}=B^*$ using that $M$ is $\aleph_0$-categorical. We show the latter. Clearly, $\overline{B}\subset B^*$. Let $p\in B^*$ and take $b$ a realization of $p$ in a separable elementary extension $M'$ of $M$. Let $\phi(z,y)$ be a formula, $c\in M^{|z|}$ and $\epsilon>0$. By $\aleph_0$-categoricity there is an isomorphism $\sigma\colon M'\to M$. Then $\tp(c)=\tp(\sigma c)$, so, by homogeneity, there is also an automorphism $g\in\Aut(M)$ with $d(c,g\sigma c)<\Delta_\phi(\epsilon)$. Hence $g\sigma b\in B$ and $|\phi(c,b)-\phi(c,g\sigma b)|<\epsilon$. We deduce that $p\in\overline{B}$.\end{proof}

\medskip

During the writing of this paper we came to know that, independently from us, A. Chernikov and P. Simon also noticed the connection between tameness in topology and $\NIP$ in logic, in the somehow parallel context of definable dynamics \cite{chesim15}. More on this connection has been elaborated by P. Simon in \cite{simRosenthal}.

In fact, it is surprising that the link was not made before, since the parallelism of these ideas in logic and topology is quite remarkable. As we have already said, $\NIP$ formulas were introduced by Shelah \cite{she71} in 1971, in the classical first-order context. He defined them by the lack of an \emph{independence property (IP)}, whence the name \emph{$\NIP$}. This independence property is the condition negated in the first item of Proposition~\ref{NIP}. In the classical first-order setting it can be read like this: a formula $\varphi(x,y)$ has IP if for some sequence of elements $(a_i)_{i<\omega}$ and every pair of non-empty finite disjoint subsets $I,J\subset\omega$, there is $b$ in some model that satisfies the formula $$\bigwedge_{i\in I}\varphi(a_i,y)\land\bigwedge_{j\in J}\neg\varphi(a_j,y).$$ In other words, $\varphi(x,y)$ has IP if for some $(a_i)_{i<\omega}$ the sequence $(\{b:\varphi(a_i,b)\})_{i<\omega}$ of the sets defined by $\varphi(a_i,y)$ on some big enough model of the theory is an \emph{independent sequence} in the sense of mere sets: all boolean intersections are non-empty.

In the introductory section 1.5 of the survey \cite{glameg13} on Banach representations of dynamical systems, Glasner and Megrelishvili write: \emph{\guillemotleft In addition to those characterizations already mentioned, tameness can also be characterized by the lack of an ``independence property'', where combinatorial Ramsey type arguments take a leading role [\dots]\guillemotright}. The characterization they allude to is Proposition~6.6 from Kerr and Li \cite{kerrli07}, and the independence property involved there can indeed be seen as a topological generalization of Shelah's IP (see also Fact \ref{kerrli} below). But the notion of independence is already present in the seminal work of Rosenthal from 1974 \cite{ros74}, where a crucial first step towards his dichotomy theorem implies showing that a sequence of subsets of a set $S$ with no convergent subsequence (in the product topology of $2^S$) admits a boolean independent subsequence. Moreover, as pointed out in \cite{simRosenthal}, the (not) independence property of Shelah, in its continuous form, appears unequivocally in the work of Bourgain, Fremlin and Talagrand \cite{BFT78}; see 2F.(vi).

On the other hand, this is not the first time that the concept of $\NIP$ is linked with a notion of another area. In 1992 Laskowski \cite{las92} noted that a formula $\varphi(x,y)$ has the independence property if and only if the family of definable sets of the form $\varphi(a,y)$ is a \emph{Vapnik--Chervonenkis class}, a concept coming from probability theory, and also from the 70's \cite{vapche71}. He then profited of the examples provided by model theory to exhibit new Vapnik--Chervonenkis classes.  In Section \ref{Examples} we shall do the same thing with respect to tame dynamical systems, complementing the analysis of the examples done by Ben Yaacov and Tsankov \cite[\textsection 6]{bentsa}.

\medskip

We end this section by pointing out that $\Tame_u(G)$ coincides, for Roelcke precompact Polish groups, with the restriction to $\UC(G)$ of the algebra $\Null(G)$ of \emph{null functions} on $G$. Null functions arise from the study of topological sequence entropy of dynamical systems, initiated in \cite{goo74}. A compact $G$-space $Y$ is null if its topological sequence entropy along any sequence is zero; we refer to \cite[\textsection 5]{kerrli07} for the pertinent definitions. We shall say that a function $f$ on an arbitrary $G$-space $X$ is null if it comes from a null compactification of $X$, and by Corollary 5.5 in \cite{kerrli07} this is equivalent to check that the cyclic $G$-space of $f$ is null. For compact $X$ this definition coincides with Definition 5.7 of the same reference (the $G$-spaces considered there are always compact), as follows from the statements 5.8 and 5.4.(2-4) thereof. The resulting algebra $\Null(X)$ is always a uniformly closed $G$-invariant subalgebra of $\Tame(X)$ (closedness is proven as for $\Tame(X)$; for the inclusion $\Null(X)\subset\Tame(X)$ compare \textsection{5} and \textsection{6} in \cite{kerrli07}).

The following fact is a rephrasing of the characterizations of Kerr and Li.

\begin{fact}\label{kerrli} A function $f\in\RUC(X)$ is null if and only if there are no real numbers $r<s$ such that for every $n$ one can find $(g_i)_{i<n}\subset G$ and $(x_I)_{I\subset n}\subset X$ such that $$\text{$f(g_ix_I)<r$ if $i\in I$ and $f(g_ix_I)>s$ if $i\notin I$}.$$\end{fact}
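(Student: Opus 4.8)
The plan is to reduce the statement to the combinatorial characterization of null compact systems due to Kerr and Li, applied to the cyclic $G$-space of $f$. Since $f\in\RUC(X)$, the cyclic $G$-space $\nu\colon X\to Y=X_f$ is a well-defined compactification, and by Corollary 5.5 of \cite{kerrli07} (together with the definition of null function adopted above) $f$ is null if and only if the compact system $Y$ is null. So throughout I would work with $Y$ and with the continuous extension $\tilde f\in\CC(Y)$ of $f$, recalling that $\nu(X)$ is dense in $Y$ and that the topology of $Y$ is generated by the orbit $G\tilde f$; in particular, distinct points of $Y$ are separated by some $g\tilde f$, that is, by the value of $\tilde f$ along the $G$-action.

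The core of the argument is a dictionary between the displayed combinatorial condition and the independence-set language of \cite{kerrli07}. Given $r<s$, consider the disjoint open sets $U_0=\{y\in Y:\tilde f(y)<r\}$ and $U_1=\{y\in Y:\tilde f(y)>s\}$. For a finite family $\{g_i\}_{i<n}\subset G$, the existence of points $(x_I)_{I\subset n}$ with $f(g_ix_I)<r$ for $i\in I$ and $f(g_ix_I)>s$ for $i\notin I$ says exactly that each intersection $\bigcap_{i<n}g_i^{-1}U_{\sigma(i)}$, with $\sigma(i)=0$ iff $i\in I$, meets the dense set $\nu(X)$ and is therefore nonempty; that is, $\{g_i\}_{i<n}$ is a finite independence set of size $n$ for the pair $(U_0,U_1)$ (distinctness of the $g_i$ being automatic, since all $2^n$ patterns are realized). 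Conversely, as the $U_k$ are open and $\nu(X)$ is dense, any finite independence set for $(U_0,U_1)$ in $Y$ can be realized by points $x_I\in X$. Hence the existence of $r<s$ for which arbitrarily large $n$ occur is equivalent to the existence of a special pair $(U_0,U_1)$ of this form admitting finite independence sets of unbounded cardinality.

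It remains to match this with nullity of $Y$. By the characterization of Kerr and Li (their statements 5.4 and 5.8, via Definition 5.7), $Y$ is null if and only if it has no nondiagonal IN-pair, equivalently if and only if no pair of disjoint nonempty open subsets of $Y$ admits finite independence sets of unbounded size. One implication is then immediate: if $Y$ is null, the special pairs $(U_0,U_1)$ cannot have unbounded independence sets, so no such $r<s$ exist. For the converse I would argue that a nondiagonal IN-pair is always detected by $\tilde f$: if $(y_1,y_2)$ is an IN-pair with $y_1\neq y_2$, then, using that the set of IN-pairs is $G$-invariant and closed and that $G\tilde f$ generates the topology of the cyclic space, there is $g\in G$ with $\tilde f(g^{-1}y_1)\neq\tilde f(g^{-1}y_2)$ while $(g^{-1}y_1,g^{-1}y_2)$ remains an IN-pair; choosing $r<s$ strictly between these two values places $g^{-1}y_1\in U_0$ and $g^{-1}y_2\in U_1$, so the special pair inherits arbitrarily large independence sets. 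Thus nonnullity of $Y$ produces the forbidden $r<s$, completing the equivalence.

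The main obstacle I anticipate is bookkeeping around the Kerr--Li apparatus: one must invoke their results in exactly the right form (nullity as the absence of nondiagonal IN-pairs, together with the $G$-invariance and closedness of the set of IN-pairs) and be careful that independence is tested for $G$ acting on the compact system $Y$, not directly on $X$. The density argument turning abstract independence sets in $Y$ into the concrete points $x_I\in X$ is routine but must be recorded, and the passage from an arbitrary nondiagonal IN-pair to one separated by the values of $\tilde f$ is the only genuinely non-formal step.
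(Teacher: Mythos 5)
Your proposal is correct and follows essentially the same route as the paper: both reduce to the Kerr--Li independence-set and IN-pair machinery on the cyclic $G$-space $X_f$ (their statements 5.4 and 5.8), with the same density argument to pull the points $x_I$ back to $X$. The only small divergence is that for the direction ``non-null $\Rightarrow$ such $r<s$ exist'' you re-derive the separation of a nondiagonal IN-pair by some translate of $\tilde f$ from the fact that $G\tilde f$ generates $\CC(X_f)$, whereas the paper obtains this directly from Kerr--Li's Proposition 5.8; your extra step is sound.
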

\begin{proof}
If $f$ is non-null then its extension to any compactification is non-null, and the existence of elements $r$, $s$ and, for every $n$, $(g_i)_{i<n}$ and $(x_I)_{I\subset n}$ as in the statement follows readily from Proposition 5.8 (and Definitions 5.1 and~2.1) in \cite{kerrli07}; we obtain the elements $x_I$ in the compactification, but we can approximate them by elements $\tilde{x}_I$ in $X$, since we only need that $f(g_i\tilde{x}_I)$ be close to $f(g_ix_I)$ for the finitely many indices $i<n$.

Conversely, if we have $r<s$ with the property negated in the statement, take $u,v$ with $r<u<v<s$ and consider the sets $A_0=\{p\in X_f:\tilde{f}(p)\leq u\}$, $A_1=\{p\in X_f:\tilde{f}(p)\geq v\}$; here, $\tilde{f}$ is the extension of $f$ to the cyclic $G$-space $X_f$. Then $A_0$ and $A_1$ are closed sets with arbitrarily large finite \emph{independence sets}. Hence by Proposition 5.4.(1) in the same paper there is an \emph{IN-pair} $(x,y)\in A_0\times A_1$, and by 5.8 we deduce that $f$ is non-null.
\end{proof}

When $X=M$ is an $\aleph_0$-categorical structure, it is immediate by $\emptyset$-saturation that a formula $f(x,y)$ is $\NIP$ on $[a]\times M$ if and only if $f_a$ is null. Thus $\Null_u(M)=\Tame_u(M)$, and by considering $M=\widehat{G}_L$ (and recalling Remark \ref{compactifications of G}) one gets $\Null_u(G)=\Tame_u(G)$ for every Roelcke precompact Polish $G$.

\noindent\hrulefill

\section{The hierarchy in some examples}\label{Examples}

Several interesting Polish groups are naturally presented as automorphism groups of well-known first-order structures. Moreover, most of these structures admit \emph{quantifier elimination}, which enables to describe their definable predicates in a simple way. As a result, the subalgebras of $\UC(G)$ that correspond to nice families of formulas can be understood pretty well in these examples.

Let $G$ be the automorphism group of an $\aleph_0$-categorical structure $M$. We recall from Proposition~\ref{Form=UC} that the functions $h$ in $UC(G)$ are exactly those of the form $h(g)=f(a,gb)$ for a formula $f(x,y)$ and tuples $a,b$ from $M$. Then $h$ factors through the orbit map $g\in G\mapsto gb\in [b]$. Bearing in mind Remark \ref{Y to Z}, it follows that
\begin{enumerate}
\item $h\in\AP(G)$ if and only if $f(x,y)$ is algebraic on $[a]\times[b]$ (Proposition \ref{AP formulas});
\item $h\in\WAP(G)$ if and only if $f(x,y)$ is stable on $[a]\times[b]$ (Fact \ref{stable is WAP});
\item $h\in\Tame_u(G)$ if and only if $f(x,y)$ is $\NIP$ on $[a]\times[b]$ (Proposition \ref{tameNIP}).
\end{enumerate}

However, a technical difficulty is that $f(x,y)$ may be a formula in infinite variables, whereas it is usually easier to work with predicates involving only finite tuples. This is especially the case in the study of classical structures, for which, moreover, the results in the literature are stated, naturally, for $\{0,1\}$-valued formulas in finitely many variables. In the following subsection we elaborate a way to deal with this difficulty. The reader willing to go directly to the examples may skip the details and retain merely the conclusion of Theorem \ref{cTame=Tame}.

\subsection{Approximation by formulas in finite variables} In this subsection $x$ and $y$ will denote variables of length $\omega$, and $M$ will be a $\emptyset$-saturated structure. Any formula $f(x,y)$ is, by construction, a uniform limit of formulas defined on finite sub-variables of $x,y$. Moreover, if $f(x,y)$ is, for instance, stable on $M^\omega\times M^\omega$, then one can uniformly approximate $f$ by stable formulas depending only on finite sub-variables of $x,y$. It suffices to take $n<\omega$ large enough so that, by uniform continuity, $|f(a,b)-f(a',b')|<\epsilon$ whenever $a_{<n}=a'_{<n}$ and $b_{<n}=b'_{<n}$; then define for example $f_n(x,y)=f(x',y')$, where $x'_{nk+i}=x_i$ and $y'_{nk+i}=y_i$ for all $i<n$, $k<\omega$.

However, if $f(x,y)$ is only known to be stable on $A\times B$ for some subsets $A,B\subset M^\omega$, then the previous simple construction does not ensure the stability of $f_n$. Besides, it may not be possible to find a formula stable on $M^\omega\times M^\omega$ that agrees with $f$ on $A\times B$.

In \cite{bentsa}, Proposition 4.7, a topological argument is given that permits to approximate $\WAP$ functions by stable formulas in finitely many variables. In what follows we give an alternative model-theoretic argument for this fact that can also be applied, in several cases, to $\NIP$ formulas.

In what follows, given a set $A\subset M$, the term $\acl(A)$ will denote the \emph{algebraic closure} of $A$, including \emph{imaginary elements} of $M$. The reader may wish to consult \cite[\textsection 10--11]{bbhu08} for an account of algebraic closure and imaginary sorts in continuous logic. Alternatively, and with no loss for the examples considered later, the reader may assume that $\acl(A)=A$.

\begin{defin}\label{detfs} Let $M$ be a metric structure, $f(x,y)$ a formula.
\begin{enumerate}
\item We will say that $f(x,y)$ is \emph{in finite variables} if there is $n<\omega$ such that $f(a,b)=f(a',b')$ whenever $a_{<n}=a'_{<n}$ and $b_{<n}=b'_{<n}$.
\item Let $a\subset M^\omega$ be a tuple and $B\subset M^\omega$ a definable set. We will say that $f(x,y)$ has \emph{definable extensions of types over finite sets on $a,B$} if for every large enough $n<\omega$ there are an $\acl(a_{<n})$-definable predicate $df(y)$ and a realization $a'$ of $\tp(a/a_{<n})$ (in some elementary extension of $M$) such that $$f(a',b)={df}(b)$$ for every $b\in B$.
\item We will say that $M$ has \emph{definable extensions of types over finite sets} if the previous condition is true on $a,M^\omega$ for every formula $f(x,y)$ and any $a\in M^\omega$.
\end{enumerate}
\end{defin}

\begin{lem}\label{stable implies definable extensions} Suppose $f(x,y)$ is stable on $A\times B$ for definable sets $A,B$. If $a\in A$, then $f(x,y)$ has definable extensions of types over finite sets on $a,B$.\end{lem}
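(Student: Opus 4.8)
The plan is to realize the required $\acl(a_{<n})$-definable predicate $df$ as the \emph{definition of the average $f$-type} of a sequence indiscernible over $a_{<n}$, and to take for $a'$ any realization of that average type. The passage from $a$ to $a'$ is essential: the $f$-type of $a$ itself over $B$ is in general only definable over $\acl(a)$, and its canonical base may genuinely involve all coordinates of $a$; averaging over a sequence indiscernible over $a_{<n}$ forgets everything about $a$ beyond $\tp(a/a_{<n})$ and pushes the canonical base down into $\acl^{eq}(a_{<n})$.

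Fix $n$ and write $p_0=\tp(a/a_{<n})$. First I would produce, in a sufficiently saturated elementary extension, an infinite sequence $(a^i)_{i<\omega}\subset A$ which is indiscernible over $a_{<n}$ and whose members all realize $p_0$; this is a standard extraction (Ramsey together with $\emptyset$-saturation, exactly as in the proofs of Propositions \ref{AP formulas} and \ref{NIP}), noting that $A$ is type-definable over $\emptyset$ so that the condition $x\in A$ survives the extraction. Since $f(x,y)$ is stable on $A\times B$, the sequence $\bigl(f_{a^i}|_B\bigr)_{i<\omega}$ lies in the weakly precompact set $\{f_a|_B:a\in A\}$ of Fact \ref{stable is WAP}, and by indiscernibility together with stability the values $\lim_i f(a^i,b)$ exist for each $b$; this lets me define the average $f$-type $p=\operatorname{Av}\bigl((a^i)_i/N\bigr)$ over a model $N\supseteq B$ by those values.

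Next I would invoke definability of types in the continuous local stability of \cite[\textsection 7]{benusv10} (compare Proposition 7.7 there): stability of $f$ yields a definable predicate $df(y)$, the $f$-definition of $p$, with $f(a',b)=df(b)$ for every $a'\models p$ and every $b\in N$, in particular for every $b\in B$. The heart of the argument is then to locate the parameter of $df$, that is the canonical base $\operatorname{Cb}(p)$, inside $\acl^{eq}(a_{<n})$. Here I would use that $p$ is built from a sequence indiscernible over $a_{<n}$: any automorphism fixing $a_{<n}$ carries $(a^i)_i$ to another $a_{<n}$-indiscernible sequence realizing $p_0$, hence carries $p$ to another average $f$-type over $N$; as there are only boundedly many $f$-types over $N$ by stability, the orbit of $\operatorname{Cb}(p)$ over $a_{<n}$ is bounded, and in the stable setting bounded closure coincides with algebraic closure. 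Thus $\operatorname{Cb}(p)\in\acl^{eq}(a_{<n})$ and $df$ is an $\acl(a_{<n})$-definable predicate (allowing, as we do, imaginary elements in $\acl$; see \cite[\textsection 10--11]{bbhu08}). Finally, taking $a'$ to realize $p$ in an elementary extension, the conditions of $p_0$ pass to the average so that $a'\models\tp(a/a_{<n})$, while $f(a',b)=df(b)$ on $B$ by construction, which is exactly the assertion.

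The step I expect to be the main obstacle is the descent of the canonical base into $\acl^{eq}(a_{<n})$: one must set up enough of the canonical-base and bounded-closure apparatus in the continuous local-stability framework to justify both that $\operatorname{Cb}(p)$ has bounded orbit over $a_{<n}$ and that bounded closure equals algebraic closure. The convergence of the average and the definability of $p$ are routine once local stability is in place, but this last invariance-and-boundedness argument is where the real content lies. A more hands-on alternative would be to prove directly that $\lim_i f(a^i,b)$ converges uniformly over $b\in B$ (via the finiteness of the local rank for stable $f$) and that the resulting uniform-limit predicate is fixed up to boundedly many conjugates by $\Aut(M)$ fixing $a_{<n}$; either route passes through the same boundedness phenomenon.
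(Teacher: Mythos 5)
The architecture you propose (reduce to the local stability machinery and realize $df$ as the definition of a suitable extension of $\tp_f(a/a_{<n})$) is close in spirit to the paper's proof, but the step you yourself single out as carrying the real content --- the descent of $\operatorname{Cb}(p)$ into $\acl(a_{<n})$ --- is unsound as argued, and the flaw is not cosmetic. Your justification, ``there are only boundedly many $f$-types over $N$ by stability,'' is false: stability bounds the number of $f$-types over $N$ by a cardinal depending on $N$, not by a bound independent of $N$, and the orbit of $\operatorname{Cb}(p)$ under automorphisms fixing $a_{<n}$ need not be bounded. Concretely, let $M$ be the ($\aleph_0$-categorical, stable) structure consisting of an equivalence relation $E$ with infinitely many infinite classes, let $f(x,y)=E(x_1,y)$, let $a\in M^\omega$ have all coordinates in pairwise distinct classes, and take $n=1$. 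Choose an $a_0$-indiscernible sequence $(a^i)_{i<\omega}$ of realizations of $\tp(a/a_0)$ in which all the $a^i_1$ lie in one fixed class $c$ with $c\neq[a_0]_E$, the remaining coordinates lying in fresh pairwise distinct classes. This sequence meets every requirement you impose, yet the $f$-definition of its average type is $df(y)=E(y,c)$, whose canonical parameter is the class $c$, an imaginary with infinitely many conjugates over $a_0$ and hence not in $\acl(a_0)$. So ``averaging forgets everything about $a$ beyond $\tp(a/a_{<n})$'' is exactly what can fail for an arbitrary $a_{<n}$-indiscernible sequence; the same phenomenon occurs at every $n$ for formulas genuinely depending on coordinates beyond the cut. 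Note also that the extraction you describe gives no control here: extracting from the constant sequence $(a,a,\dots)$ returns the constant sequence, whose average $f$-type is $\tp_f(a/N)$ itself, definable only over $a$.

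What is true, and what the paper uses, is that the \emph{nonforking} (equivalently, definable) extension of the $f$-type of $a$ over $C=\acl(a_{<n})$ has its definition over $C$. The paper gets this by viewing $A$ and $B$ as sorts so that $f$ becomes a stable formula in the sense of Definition 7.1 of \cite{benusv10}, applying Proposition 7.15 there to extend $\tp_f(a/C)$ to a $C$-definable $f$-type $q$ over the model, and then invoking the argument of \cite[\textsection 8.1]{benusv10} to see that $q$ is consistent with $\tp(a/C)$ (a point your full-average-type route would handle automatically, but which must be addressed if one works only with $f$-types). In your language, the fix is to take $(a^i)$ to be a Morley sequence over $\acl(a_{<n})$ --- realizations of successive nonforking extensions --- rather than an arbitrary $a_{<n}$-indiscernible sequence of realizations of $\tp(a/a_{<n})$; for such a sequence the average type is the nonforking extension and the canonical base does lie in $\acl(a_{<n})$. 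As written, the proof has a genuine gap precisely at the point you flagged.
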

\begin{proof} Let $n<\omega$. Since $A$ and $B$ are definable sets we can consider them as sorts in their own right (say, of an expanded structure $M'$), and consider $f(x,y)$ as a formula defined only on $A\times B$ (so $x$ and $y$ become $1$-variables of the corresponding sorts). Then $f$ is a stable formula in the usual sense of \cite{benusv10}, Definition 7.1, and we may apply the results thereof. More precisely, we can consider the $f$-type of $a$ over $C=\acl(a_{<n})$, call it $p\in S_f(C)$. Here, $p$ is an $f$-type (in the variable~$x$) in the sense of \cite{benusv10}, Definition 6.6. By Proposition 7.15 of the same paper, $p$ admits a definable extension $q\in S_f(M')$. Moreover, the type $q$ is consistent with $\tp(a/C)$, by the argument explained in \cite[\textsection 8.1]{benusv10}; note that, by adding dummy variables, each predicate $h(x)\in\tp(a/C)$ can be seen as a formula $h(x,y)$ (in the structure expanded with constants for the elements of $C$), which is trivially stable. Then it is enough to take for $a'$ any realization of~$q\cup\tp(a/C)$.\end{proof}

\begin{lem}\label{symmetry} Let $A,B\subset M^\omega$ be definable sets. Given a formula $f(x,y)$, define $\tilde{f}(y,x)=f(x,y)$. Then $f(x,y)$ is algebraic, stable or $\NIP$ on $A\times B$ if and only if so is $\tilde{f}(y,x)$ on $B\times A$.\end{lem}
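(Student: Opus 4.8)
The plan is to treat the three properties separately, since each rests on its own characterization: stability will be immediate, the algebraic case will reduce to a compactness duality, and the $\NIP$ case will carry the only genuine argument.

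For stability there is nothing to prove beyond unwinding the definition. The order property of $f$ on $A\times B$ is witnessed by $\epsilon>0$ and sequences $(a_i)_{i<\omega}\subset A$, $(b_j)_{j<\omega}\subset B$ with $|f(a_i,b_j)-f(a_j,b_i)|\geq\epsilon$ for $i<j$. Since $\tilde f(b_i,a_j)=f(a_j,b_i)$, the very same sequences, with the two coordinates exchanged, witness the order property of $\tilde f$ on $B\times A$, and conversely. Hence $f$ is stable on $A\times B$ if and only if $\tilde f$ is stable on $B\times A$.

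For the algebraic case I would use Proposition~\ref{AP formulas}: $f$ is algebraic on $A\times B$ iff $\{f_a|_B:a\in A\}$ is precompact in $\CC(B)$, i.e.\ iff the pseudometric $d_A(a,a')=\sup_{b\in B}|f(a,b)-f(a',b)|$ makes $A$ totally bounded; dually $\tilde f$ is algebraic on $B\times A$ iff $(B,d_B)$ is totally bounded, where $d_B(b,b')=\sup_{a\in A}|f(a,b)-f(a,b')|$. These are equivalent by an Arzel\`a--Ascoli duality. If $(A,d_A)$ is totally bounded, let $\widehat A$ be its (compact) completion; each $g_b(a)=f(a,b)$ is bounded and $1$-Lipschitz for $d_A$, hence extends continuously to $\widehat A$, and $\{g_b:b\in B\}$ is a uniformly bounded, equicontinuous family on the compact space $\widehat A$. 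By Arzel\`a--Ascoli it is precompact in $\CC(\widehat A)$, and since $\|g_b-g_{b'}\|_{\CC(\widehat A)}=d_B(b,b')$ this is exactly the total boundedness of $(B,d_B)$. The converse is symmetric.

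The $\NIP$ case is where the work lies, and I would argue directly from condition~(1) of Proposition~\ref{NIP}. Suppose $f$ has the independence property on $A\times B$, witnessed by $r\neq s$, a sequence $(a_i)_{i<\omega}\subset A$ and elements $(b_I)_{I\subseteq\omega}$ with $P_B(b_I)=0$ and $f(a_i,b_I)=r$ for $i\in I$, $f(a_i,b_I)=s$ for $i\notin I$. The idea is to reindex the columns by an independent family: fix subsets $\{I_j\}_{j<\omega}$ of $\omega$ such that $\bigcap_{j\in J_1}I_j\setminus\bigcup_{j\in J_0}I_j\neq\emptyset$ for all disjoint finite $J_0,J_1\subset\omega$ (such families exist), and set $b'_j=b_{I_j}$. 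Then for any finite pattern $\sigma\colon J\to\{0,1\}$ there is $i$ with $i\in I_j\Leftrightarrow\sigma(j)=1$ on $J$, and this single $a_i\in A$ satisfies $f(a_i,b'_j)=r$ when $\sigma(j)=1$ and $f(a_i,b'_j)=s$ when $\sigma(j)=0$. Consequently, for each $J\subseteq\omega$ the set of conditions $\{f(x,b'_j)=r:j\in J\}\cup\{f(x,b'_j)=s:j\notin J\}\cup\{P_A(x)=0\}$ is exactly finitely satisfied by the various $a_i$, hence consistent; realizing it in an elementary extension yields $a'_J$ with $P_A(a'_J)=0$ and $f(a'_J,b'_j)=r\Leftrightarrow j\in J$. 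Reading $\tilde f(b'_j,a'_J)=f(a'_J,b'_j)$, this is precisely the independence property of $\tilde f$ on $B\times A$, with the same $r,s$; the converse follows by exchanging $A$ and $B$. The main obstacle is spotting this reindexing: the point is that condition~(1) supplies $b_I$ for \emph{every} subset $I\subseteq\omega$, so an independent family lets the fixed rows $a_i$ shatter the new columns $b'_j$, after which $\emptyset$-saturation delivers the dual parameters with exact values and no appeal to a Sauer--Shelah type bound is required.
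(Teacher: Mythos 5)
Your proof is correct, and in two of the three cases it is genuinely more self-contained than the paper's. The stable case is handled identically (the order property is manifestly symmetric; the paper just says "clear"). For the algebraic case the paper gives a direct finite-net argument: from an $\epsilon$-net $\{f_{a_i}\}_{i<n}$ for $\{f_a|_B:a\in A\}$ and a partition of the range of $f$ into intervals of diameter $<\epsilon$, it selects one $b_\tau$ per realized pattern $\tau\colon n\to m$ and checks that the $\tilde f_{b_\tau}|_A$ form a $3\epsilon$-net for $\{\tilde f_b|_A:b\in B\}$. Your Arzel\`a--Ascoli argument on the completion of the pseudometric $d_A$ proves the same duality more conceptually; the paper's version is more elementary and quantitative, yours makes the "mutual total boundedness" phenomenon transparent. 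For the $\NIP$ case the paper simply cites Simon's Lemma 2.5; your reindexing by an independent family followed by compactness is essentially that argument, written out, and the combinatorial core (the fixed rows $a_i$ shatter the new columns $b_{I_j}$) is right.

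One loose end in the $\NIP$ case is worth flagging. Condition (1) of Proposition \ref{NIP} for $\tilde f$ on $B\times A$ asks for the row sequence to lie in $B\subset M^\omega$, whereas your $b'_j=b_{I_j}$ live in an elementary extension and only satisfy $P_B(b'_j)=0$. This is repaired by $\emptyset$-saturation: realize $\tp((b'_j)_{j<\omega}/\emptyset)$ in $M$ (so the realizations land in $B$, since $P_B$ vanishes exactly on $B$ inside $M$), noting that the approximate finite satisfiability of the shattering conditions is part of that type because the relevant infima over $x$ with $P_A(x)=0$ are $\emptyset$-definable predicates in the $y_j$. Alternatively one can target condition (2) or (3) directly. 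The fix is routine, but as written the configuration you produce is not literally the one condition (1) forbids.
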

\begin{proof} This is clear for the stable case. For the $\NIP$ case, the proof is as in \cite{sim14}, Lemma 2.5. If $f(x,y)$ is algebraic on $A\times B$ this means that $K=\{f_a|_B:a\in A\}$ is precompact in $\CC(B)$, so given $\epsilon>0$ there are $a_i\in A$, $i<n$, such that the functions $f_{a_i}|_B$ form an $\epsilon$-net for $K$. Let $I_j\subset\RR$, $j<m$, be a partition of the image of $f$ on sets of diameter less than $\epsilon$. For each function $\tau\colon n\to m$ let $b_\tau\in B$ be such that $f(a_i,b_\tau)\in I_{\tau(i)}$ for every $i<n$, if such an element exists. Then the functions $\tilde{f}_{b_\tau}|_A$ form a $3\epsilon$-net for $\tilde{K}=\{\tilde{f}_b|_A:b\in B\}\subset\CC(A)$. This shows that $\tilde{K}$ is also precompact, hence that $\tilde{f}$ is algebraic on $B\times A$.\end{proof}

In the following theorem we ask $M$ to be $\aleph_0$-categorical to ensure that the closed orbits we consider are definable sets. The addition of imaginary sorts does not affect the $\aleph_0$-categoricity of~$M$.

\begin{prop} Let $M$ be $\aleph_0$-categorical, $f(x,y)$ a formula, $a,b\in M^\omega$. Suppose either
\begin{enumerate}
\item $f(x,y)$ is algebraic on $[a]\times [b]$,
\item $f(x,y)$ is stable on $[a]\times [b]$, or
\item $M$ has definable extensions of types over finite sets, and $f(x,y)$ is $\NIP$ on $[a]\times [b]$.
\end{enumerate}
Then for every $\epsilon>0$ there is a formula $f_0(x,y)$ in finite variables such that $$\sup_{x\in [a],y\in [b]}|f(x,y)-f_0(x,y)|\leq\epsilon$$ and $f_0(x,y)$ is algebraic, stable or $\NIP$, respectively, on $[a]\times [b]$.
\end{prop}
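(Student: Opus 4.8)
The plan is to shorten the two variables to finite length one at a time: first $x$, using \emph{definable extensions of types over finite sets}, then $y$, using the symmetry of Lemma~\ref{symmetry}. I begin by noting that the reduction tool is available in every case. In cases (1) and (2) the family $\{f_a|_{[b]}:a\in[a]\}$ is norm-precompact or weakly precompact, so in both cases $f$ is stable on $[a]\times[b]$ (Fact~\ref{stable is WAP}); hence by Lemma~\ref{stable implies definable extensions}, $f$ has definable extensions of types over finite sets on $a,[b]$. In case (3) this is granted by hypothesis. Fixing $n$ large, I thus obtain an $\acl(a_{<n})$-definable predicate $df(y)$ and a realization $a'$ of $\tp(a/a_{<n})$, in some elementary extension, with $f(a',y)=df(y)$ for every $y\in[b]$.

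Writing $df(y)=\Phi(a_{<n},y)$ for a $\emptyset$-definable predicate $\Phi$ (over $M^{\mathrm{eq}}$, to absorb the algebraic closure), I set $f_1(x,y)=\Phi(x_{<n},y)$, a formula in finite $x$-variables. Since $a'$ realizes $\tp(a/a_{<n})$ we have $a'_{<n}=a_{<n}$, so the weighted metric gives $d(a,a')\leq 2^{-n}\operatorname{diam}(M)$, and uniform continuity of $f$ yields $\sup_{y\in[b]}|f(a,y)-df(y)|=\sup_{y\in[b]}|f(a,y)-f(a',y)|<\epsilon$ once $n$ is large. Invariance then spreads this bound: for $g\in G$ and $y\in[b]$, using $\Phi(ga_{<n},y)=\Phi(a_{<n},g^{-1}y)$, the $G$-invariance of $f$, and $g^{-1}y\in[b]$, one finds that $f_1(ga,y)=df(g^{-1}y)$ lies within $\epsilon$ of $f(ga,y)$. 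Thus $\sup|f-f_1|<\epsilon$ on the dense subset $Ga\times[b]$, hence on all of $[a]\times[b]$ by continuity.

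The same computation shows $\{f_1(x,\cdot)|_{[b]}:x\in Ga\}=G\cdot(df|_{[b]})$, the $G$-orbit of $df|_{[b]}$ in $\CC([b])$. To see that $df|_{[b]}$ carries the relevant property, I invoke approximate $\aleph_0$-saturation (valid as $M$ is $\aleph_0$-categorical): for each $\epsilon'>0$ there is $a''\in M\cap[a]$ with $\sup_{[b]}|f(a'',\cdot)-df|<\epsilon'$, and now the orbit $G\cdot(f_{a''}|_{[b]})=\{f_{ga''}|_{[b]}:g\in G\}$ sits inside $\{f_x|_{[b]}:x\in[a]\}$. By Proposition~\ref{AP formulas}, Fact~\ref{stable is WAP} and Corollary~\ref{NIPcorollary} respectively, the latter family is norm-, weakly, or sequentially precompact, a property inherited by the suborbit; hence $f_{a''}|_{[b]}$ lies in $\AP([b])$, $\WAP([b])$ or $\Tame([b])$. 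Letting $\epsilon'\to 0$ and using that each of these is a uniformly closed subalgebra, $df|_{[b]}$ lies in it too; so its orbit has the property, and since $Ga$ is dense in $[a]$ (here the ``dense subset suffices'' clause of Corollary~\ref{NIPcorollary} is used in the $\NIP$ case, and norm-closure of the orbit in the other two), I conclude that $f_1$ is algebraic, stable or $\NIP$ on $[a]\times[b]$, matching the three cases.

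It remains to shorten the $y$-variable. By Lemma~\ref{symmetry} the flip $\tilde f_1(y,x)=f_1(x,y)$ has the corresponding property on $[b]\times[a]$, and the reduction tool is again available---by Lemma~\ref{stable implies definable extensions} in cases (1) and (2), and by the global hypothesis in case (3), which is exactly why (3) posits definable extensions for \emph{every} formula. Re-running the construction in the $y$-variable produces $f_0$, within $\epsilon$ of $f_1$ and with the property on $[a]\times[b]$; moreover $f_0$ stays finite in $x$, because the $y$-side definable extension is evaluated at $f_1(x,b')=\Phi(x_{<n},b')$, which still depends only on $x_{<n}$. After rescaling $\epsilon$, this is the desired $f_0$. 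The main obstacle is the property-preservation of the third paragraph: the definable extension $df$ is handed to us only through a realization $a'$ in an elementary extension and through imaginary parameters in $\acl(a_{<n})$, whereas $\AP$, $\WAP$ and tameness of $df|_{[b]}$ must be certified inside $M$ for the action of $G=\Aut(M)$. Bridging this gap---pulling a realization back into $M\cap[a]$ approximately and using the uniform closedness of the target algebra to pass from the $\epsilon'$-approximants to $df|_{[b]}$ itself---is where the real work lies.
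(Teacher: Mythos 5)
Your overall route is the same as the paper's (shorten $x$ first using definable extensions of types over finite sets, then shorten $y$ by Lemma~\ref{symmetry}), but there is a genuine gap at the decisive step. Lemma~\ref{stable implies definable extensions} and Definition~\ref{detfs} hand you a predicate $df(y)$ that is $\acl(a_{<n})$-definable, i.e.\ of the form $\psi(c,y)$ with $c$ a (possibly imaginary) parameter in the \emph{algebraic} closure of $a_{<n}$. You then write $df(y)=\Phi(a_{<n},y)$ for a $\emptyset$-definable $\Phi$, ``to absorb the algebraic closure''. This is exactly what cannot be done in general: such a $\Phi$ exists precisely when the canonical parameter of $df$ lies in the \emph{definable} closure of $a_{<n}$, i.e.\ when $df$ is invariant under every automorphism fixing $a_{<n}$. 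An element of $\acl(a_{<n})\setminus\operatorname{dcl}(a_{<n})$ is moved (permuted among its conjugates) by such automorphisms, so $\psi(c,y)$ need not be $a_{<n}$-invariant and your $f_1(x,y)=\Phi(x_{<n},y)$ is not well defined. Everything downstream (the error estimate spread by $G$-invariance, the identification of $\{f_1(ga,\cdot)\}$ with the orbit of $df|_{[b]}$) rests on this nonexistent $\Phi$.

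The paper's proof spends its main effort exactly here: it takes the compact, $a_{<n}$-definable set $C$ of realizations of $\tp(c/a_{<n})$ and replaces $df(c,y)$ by the symmetrized predicate $\sup_{z\in C}df(z,y)$, which \emph{is} $a_{<n}$-invariant and hence of the form $f'(a_{<n},y)$; the $\epsilon/2$-bound survives because every $c'\in C$ gives the same estimate, and then each of the three properties must be re-verified for the supremum (using $C\subset[c]$, compactness of $C$, and uniform limits of finite maxima for the stable and $\NIP$ cases). None of this appears in your write-up. Your closing paragraph correctly identifies property-preservation as ``where the real work lies'', but the approximate-saturation argument you sketch there certifies the property only for $df|_{[b]}$ itself, not for the symmetrization that is actually needed; and in the special situation where $\acl(a_{<n})=\operatorname{dcl}(a_{<n})$ (where your $\Phi$ does exist) that part of your argument would be fine. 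To repair the proof, insert the $\sup_{z\in C}$ step and check, case by case, that algebraicity, stability and $\NIP$ on $[c]\times[b]$ pass to $f'(x_{<n},y)$ on $[a_{<n}]\times[b]$.
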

\begin{proof}
Let $n$ be large enough, so that in particular $|f(u,v)-f(u',v)|\leq\epsilon/2$ for any tuples $u,u',v$ with $u_{<n}=u'_{<n}$. Using Lemma \ref{stable implies definable extensions} for cases (1) and (2) (remark that a formula algebraic on $A\times B$ is stable on $A\times B$) we have that in any case there are a formula $df(z,y)$, a parameter $c\in\acl(a_{<n})$ and a realization $a'$ of $\tp(a/a_{<n})$ in some elementary extension of $M$, such that $f(a',b')={df}(c,b')$ for every $b'\in [b]$.

Let $C$ be the set of realizations of $\tp(c/a_{<n})$. Since $c\in\acl(a_{<n})$, this set is compact, $a_{<n}$-definable and contained in the appropriate imaginary sort of $M$ (see \cite{bbhu08}, Exercise 10.8 and Proposition 10.6). Here, $a_{<n}$-definable means that $C$ is a definable set in the structure $M$ augmented with constants for the elements of $a_{<n}$ (that is, $d(x,C)$ is an $a_{<n}$-definable predicate), and hence we can quantify over $C$ in this augmented structure: in particular, $\sup_{z\in C}df(z,y)$ is an $a_{<n}$-definable predicate. This says that there is a formula $f'\colon M^n\times M^\omega\to\RR$ such that, for every $b'\in [b]$, $$f'(a_{<n},b')=\sup_{z\in C}df(z,b').$$ For any $a''$ with $a''_{<n}=a_{<n}$ we have $\sup_{y\in [b]}|f(a'',y)-df(c,y)|\leq\epsilon/2$, and the same is true if we replace $c$ by any $c'\in C$. We obtain $\sup_{y\in [b]}|f(a,y)-f'(a_{<n},y)|\leq\epsilon/2,$ and thus $$\sup_{x\in [a],y\in [b]}|f(x,y)-f'(x_{<n},y)|\leq\epsilon/2.$$

Now we consider each of the cases of the statement separately.
\begin{enumerate}
\item Let $(b_j)_{j<\omega}$ be an indiscernible sequence in $[b]$. By the hypothesis and Lemma \ref{symmetry}, the value of $f(a,b_j)$ is constant in $j$, and the same holds for $a'$ instead of $a$. Thus $df(c,b_j)$ is constant in $j$, and we can deduce that $df(z,y)$ is algebraic on $[c]\times [b]$. Since $C\subset [c]$, it follows that $f'(a_{<n},b_j)$ is constant too. We can conclude that $f'(x_{<n},y)$ is algebraic on $[a_{<n}]\times [b]$.

\item Since $f(x,y)$ is stable on (the definable sets) $[a]\times[b]$ and $M$ is $\emptyset$-saturated, no sequences $a'_i,b'_j$, in any elementary extension, with $\tp(a'_i)=\tp(a)$, $\tp(b'_j)=\tp(b)$, can witness the order property for $f(x,y)$. Hence, the function $f_{a'}\in\CC([b])$ is $\WAP$. Since $f_a'=df_c$ on $[b]$, it follows that $df(z,y)$ is stable on $[c]\times[b]$. Since $C$ is compact, it is not difficult to deduce that $f'(x_{<n},y)$ is stable on $[a_{<n}]\times [b]$. For example, we know that $\max_{l<k}df_{c_l}$ is in $\WAP([b])$ for every $(c_l)_{l<k}\subset C$, and $f'_{a_{<n}}|_{[b]}$ is a uniform limit of functions of this form.

\item Here, if $(b_j)_{j<\omega}\subset [b]$ is an indiscernible sequence and $g$ is an automorphism of $M$, the sequence $(df(gc,b_j))_{i<\omega}$ must converge in $\RR$. Indeed, $df(gc,b_j)=f(a',g^{-1}b_j)$, so the claim follows from the fact that $f(x,y)$ is $\NIP$ on $[a]\times[b]$ and $(g^{-1}b_j)_{j<\omega}$ is also indiscernible. By uniform continuity and a density argument, the same is true if we replace $gc$ with any $c'\in[c]$. We deduce that $df(z,y)$ is $\NIP$ on $[c]\times [b]$. As in the previous item, this implies that $f'(x_{<n},y)$ is $\NIP$ on $[a_{<n}]\times [b]$.
\end{enumerate}

This is half what we intended. To complete the proof it suffices to apply the same construction to the formula $\tilde{f}'(y,x)=f'(x_{<n},y)$. We obtain a formula $f''(y_{<m},x)$; we define $f_0(x,y)=f''(y_{<m},x)$, then $f_0(x,y)$ is in finite variables and satisfies the other conditions of the statement.
\end{proof}

\begin{question} Is the previous result true in the $\NIP$ case without the assumption on $M$?\end{question}

We remark that a $\{0,1\}$-valued formula is necessarily in finite variables. Also, any formula with finite range can be written as a linear combination of $\{0,1\}$-valued formulas. If $M$ is \emph{classical} $\aleph_0$-categorical, then, conversely, any formula in finite variables has finite range, since it factors through the finite space $M^k\sslash G$ for some $k<\omega$. For $G=\Aut(M)$ it follows that $\UC(G)$ is the closed algebra generated by the functions of the form $g\mapsto f(a,gb)$ where $a,b$ are parameters and $f(x,y)$ is a \emph{classical} (i.e.\ $\{0,1\}$-valued) formula.

Let us define $c\Tame_u(G)$ (respectively, $c\AP(G)$, $c\WAP(G)$) as the closed subalgebra of $\UC(G)$ generated by the functions of the form $g\mapsto f(a,gb)$ for $\{0,1\}$-valued $\NIP$ (resp., algebraic, stable) formulas $f(x,y)$. That is, these are the algebras generated by classical formulas of the appropriate corresponding kind. Here, assuming $M$ is classical $\aleph_0$-categorical, it is indifferent to ask $f(x,y)$ to be $\NIP$ only on $[a]\times [b]$ or in its whole domain, since one can easily modify $f$ so that it be $\NIP$ (resp., algebraic, stable) everywhere, without changing the function $g\mapsto f(a,gb)$. Indeed, one can assume that $a,b\in M^k$ for some $k<\omega$, then set $f$ to be $0$ outside $[a]\times[b]$ (since $M^k$ is discrete and $[a]\times[b]$ is definable, the modified $f$ is still definable).

From the previous proposition and discussion we obtain the following conclusion, which extends Theorem~5.4 in \cite{bentsa}.

\begin{theorem}\label{cTame=Tame}
Let $M$ be a classical $\aleph_0$-categorical structure, $G$ its automorphism group. Then $c\AP(G)=\AP(G)$ and $c\WAP(G)=\WAP(G)$. If $M$ has definable extensions of types over finite sets, then also $c\Tame_u(G)=\Tame_u(G)$.
\end{theorem}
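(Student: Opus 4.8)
The plan is to prove each of the three equalities by combining a trivial inclusion with an approximation argument, the latter resting entirely on the preceding proposition. For the easy direction, observe that every generator $g\mapsto f(a,gb)$ of $c\AP(G)$ comes from a $\{0,1\}$-valued formula $f(x,y)$ that is algebraic everywhere, hence algebraic on $[a]\times[b]$; by the characterization (1) recalled at the start of this section such a generator lies in $\AP(G)$. Since $\AP(G)$ is a uniformly closed subalgebra of $\UC(G)$, the closed subalgebra it generates satisfies $c\AP(G)\subset\AP(G)$, and in the same way the characterizations (2) and (3) give $c\WAP(G)\subset\WAP(G)$ and $c\Tame_u(G)\subset\Tame_u(G)$.

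For the reverse inclusions I would take $h$ in $\AP(G)$ (resp.\ $\WAP(G)$, $\Tame_u(G)$) and, using Proposition \ref{Form=UC} together with characterization (1) (resp.\ (2), (3)), write $h(g)=f(a,gb)$ for a formula $f(x,y)$ that is algebraic (resp.\ stable, $\NIP$) on $[a]\times[b]$. Fixing $\epsilon>0$ and applying the preceding proposition --- invoking its case (3), and hence the standing hypothesis on $M$, precisely in the tame case --- I obtain a formula $f_0(x,y)$ in finite variables, of the same kind on $[a]\times[b]$, with $\sup_{x\in[a],\,y\in[b]}|f-f_0|\le\epsilon$. Then $g\mapsto f_0(a,gb)$ approximates $h$ uniformly within $\epsilon$, so it is enough to show this function lies in the corresponding $c$-algebra.

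At this point I would exploit that $M$ is classical $\aleph_0$-categorical: a formula in finite variables factors through some finite quotient $M^k\sslash G$, so $f_0$ has finite range and splits as a finite combination $f_0=\sum_c c\cdot\mathds{1}_{\{f_0=c\}}$ of $\{0,1\}$-valued formulas. The key point is that each level-set indicator $\mathds{1}_{\{f_0=c\}}$ inherits the relevant property on $[a]\times[b]$. For the algebraic and $\NIP$ cases this is immediate from the indiscernible-sequence criteria of Proposition \ref{AP formulas}(2) and Proposition \ref{NIP}(2): if $(f_0(a_i,y))_i$ is constant on $[b]$ along every indiscernible $(a_i)\subset[a]$, or if $(f_0(a_i,b))_i$ converges for every such sequence and $b\in[b]$, then, using finite range so that convergence means eventual constancy, the same persists after composing with $\mathds{1}_{\{\cdot=c\}}$. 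For the stability case I would argue through the order property directly: were $\mathds{1}_{\{f_0=c\}}$ to have it on $[a]\times[b]$, a Ramsey argument on the finitely many possible values of the pair $(f_0(a_i,b_j),f_0(a_j,b_i))$ for $i<j$ would stabilize it to a pair of distinct reals, which is exactly the order property for $f_0$, a contradiction. Finally, modifying each $\mathds{1}_{\{f_0=c\}}$ to vanish outside the definable set $[a]\times[b]$ --- as explained in the discussion preceding the theorem, this keeps it $\{0,1\}$-valued and of the required kind everywhere without altering $g\mapsto\mathds{1}_{\{f_0=c\}}(a,gb)$ --- places each of these functions in the $c$-algebra. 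Hence $g\mapsto f_0(a,gb)$, and with it the uniform limit $h$, belongs there as well.

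The substantive input is thus entirely concentrated in the preceding proposition, and the present argument is largely bookkeeping once that is available. The only step that needs genuine care is the stability case of the level-set inheritance: unlike the algebraic and $\NIP$ conditions, the order property is not phrased through indiscernible sequences, so it requires the additional Ramsey reduction that leans on the finiteness of the range of $f_0$. I expect this --- rather than any of the inclusions or approximations --- to be the main obstacle internal to the proof.
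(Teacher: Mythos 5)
Your proposal is correct and follows exactly the route the paper intends: the paper derives the theorem "from the previous proposition and discussion" without spelling out the details, and your argument supplies precisely those details (the easy inclusion, the approximation via the finite-variable proposition, the finite-range decomposition into $\{0,1\}$-valued level sets, and the truncation outside $[a]\times[b]$). Your Ramsey argument for the stable case of the level-set inheritance is the right way to fill the one step the paper leaves genuinely unstated.
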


As we will see shortly, the assumption that $M$ has definable extension of types over finite sets is satisfied in many interesting cases. The following is a useful sufficient condition.

\begin{lem}\label{defts via invariant types} Suppose $M$ is classical, $\aleph_0$-categorical, and that for every $a\in M^\omega$ and $n<\omega$ there is a type $p\in S_x(M)$ such that $p$ extends $\tp(a/a_{<n})$ and $p$ is $a_{<n}$-invariant (i.e.\ $p$ is fixed under all automorphisms of $M$ fixing the tuple $a_{<n}$). Then $M$ has definable extensions of types over finite sets.\end{lem}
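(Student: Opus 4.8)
The plan is to feed the hypothesis directly into Definition \ref{detfs}(2) with $B=M^\omega$: for each $n<\omega$ take the $a_{<n}$-invariant type $p\in S_x(M)$ extending $\tp(a/a_{<n})$, realize it by some $a'$ in an elementary extension $N\succeq M$, and show that the predicate $df(y):=f(a',y)$ is the required $\acl(a_{<n})$-definable extension. Since such a $p$ is given for \emph{every} $n$, this establishes condition (2) for all $n$ at once, covering the ``for every large enough $n$'' clause; as $f$ and $a$ are arbitrary, $M$ then has definable extensions of types over finite sets.

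First I would check that $df$ is a well-defined function $M^\omega\to\RR$. For a fixed $b\in M^\omega$ the map $x\mapsto f(x,b)$ is an $M$-definable predicate, so its value on any realization of $p$ is determined by $p$ alone; hence $f(a',b)$ is independent of the chosen realization $a'$, and $df$ is genuinely a function of $b$. Moreover $a'$ realizes $\tp(a/a_{<n})$ because $p$ extends it, so the pair $(df,a')$ already has the shape demanded by Definition \ref{detfs}(2); what is left is to verify that $df$ is $\acl(a_{<n})$-definable.

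The core is an invariance computation. Fix $\sigma\in\Aut(M)$ with $\sigma a_{<n}=a_{<n}$. The $a_{<n}$-invariance of $p$ means exactly that $p$ assigns the same value to $f(x,b)$ and to $f(x,\sigma b)$ for every $b$ (indeed $\sigma_\ast p=p$ gives $p\bigl(f(x,\sigma b)\bigr)=p\bigl(f(x,\sigma^{-1}\sigma b)\bigr)=p\bigl(f(x,b)\bigr)$). Evaluating on $a'$ this reads $df(\sigma b)=f(a',\sigma b)=f(a',b)=df(b)$, so $df$ is invariant under the pointwise stabilizer $\Aut(M/a_{<n})$. It is also uniformly continuous, being $y\mapsto f(a',y)$ for the fixed uniformly continuous formula $f$. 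Now naming the finite tuple $a_{<n}$ preserves $\aleph_0$-categoricity (the stabilizer of a finite tuple is again oligomorphic), so $(M,a_{<n})$ is $\aleph_0$-categorical, and by the characterization of definable predicates in $\aleph_0$-categorical structures recalled earlier---a function is a definable predicate iff it is continuous and automorphism-invariant---$df$ is an $a_{<n}$-definable, hence \emph{a fortiori} $\acl(a_{<n})$-definable, predicate. This is precisely condition (2) of Definition \ref{detfs} on $a,M^\omega$.

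The only delicate point, and the step I would treat carefully, is the passage from ``continuous and $\Aut(M/a_{<n})$-invariant'' to ``$a_{<n}$-definable'': this is where $\aleph_0$-categoricity is indispensable, and one must confirm that expanding by the constants $a_{<n}$ leaves the structure $\aleph_0$-categorical so that the topological characterization of definable predicates is available in $(M,a_{<n})$. Everything else is routine bookkeeping about invariant types over $M$.
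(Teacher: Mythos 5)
Your proof is correct and follows essentially the same route as the paper's: realize the invariant type by $a'$, set $df(b)=f(a',b)$, observe $a_{<n}$-invariance, and conclude $a_{<n}$-definability from the fact that naming a finite tuple preserves $\aleph_0$-categoricity together with the topological characterization of definable predicates. The paper's proof is just a more compressed version of the same argument (citing Tent--Ziegler for the preservation of $\aleph_0$-categoricity), so the extra care you take with well-definedness and the invariance computation is fine but not a departure.
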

\begin{proof}
Let $a\in M^\omega$, $n<\omega$; take $p$ as in the hypothesis of the lemma, $a'$ a realization of $p$. Given a formula $f(x,y)$, the function $df$ defined by $df(b)=f(a',b)$ is $a_{<n}$-invariant. Since $M$ is classical $\aleph_0$-categorical, the structure $M$ expanded with constants for the elements of $a_{<n}$ is $\aleph_0$-categorical too (see \cite{tenzie}, Corollary 4.3.7). It follows that $df(y)$ is an $a_{<n}$-definable predicate, hence the conditions of Definition \ref{detfs} are satisfied.\end{proof}

\medskip

\subsection{The examples} We describe the dynamical hierarchy of function algebras for the automorphism groups of some well-known (unstable) $\aleph_0$-categorical structures. We start with the oligomorphic groups $\Aut(\QQ,<)$, $\Aut(RG)$ and $\Homeo(2^\omega)$.

The unique countable dense linear order without endpoints, $(\QQ,<)$, admits quantifier elimination (see \cite[\textsection 3.3.2]{tenzie}). This implies, for $G=\Aut(\QQ,<)$, that $\UC(G)$ is the closed unital algebra generated by the functions of the form $g\mapsto (a=gb)$ and $g\mapsto (a<gb)$ for elements $a,b\in\QQ$ ---where we think of the classical predicates $x=y$ and $x<y$ as $\{0,1\}$-valued functions. The formula $x<y$ is $\NIP$ (and $x=y$ is of course stable), whence we deduce that every $\UC$ function is tame. On the other hand, $x<y$ is unstable, so $g\mapsto (a<gb)$ is not $\WAP$ (in fact, as follows from \cite{bentsa}, Example 6.2, $\WAP(G)$ is precisely the unital algebra generated by the functions of the form $g\mapsto (a=gb)$).

Now suppose $f(x,y)$ is a formula algebraic on $[a]\times [b]$. For slight convenience we may assume, by Theorem \ref{cTame=Tame}, that $f$ is classical and the tuples involved are finite. For tuples $c,d$, let us write $c<d$ to mean that every element of the tuple $c$ is less than every element of $d$. Let $b'\in[b]$. We can choose a sequence of tuples $(a^i)_{i<\omega}$ in $\QQ$ (or in an elementary extension if we did not assume the tuples are finite) such that $a\simeq a^i$ as linear orders, $a=a_0$, $b<a^1$, $b'<a^1$, and $a^i<a^j$ if $i<j$. By quantifier elimination, the type of a tuple depends only on its isomorphism type as a linear order; hence $(a^i)_{i<\omega}$ is an indiscernible sequence. By the hypothesis on $f$ we have that $(f(a^i,b))_{i<\omega}$ is constant, and the same with $b'$ instead of $b$. But, again by quantifier elimination, $f(a^1,b)=f(a^1,b')$. It follows that $f(a,b)=f(a,b')$. We have thus shown that $g\mapsto f(a,gb)$ is constant, and can deduce that $G$ is $\AP$-trivial. (In fact, as is well-known, $G$ is extremely amenable (\cite{pesExtreme}), which is a much stronger property: if $f\in\AP(G)$ then the compact $G$-space $\overline{Gf}$ must have a fixed point; since the action of $G$ on $\overline{Gf}$ is by isometries we conclude that $\overline{Gf}$ is a singleton, i.e. that $f$ is constant.)

Putting these conclusions together we get the following (where $\RR$ stands for the algebra of constant functions on $G$).

\begin{cor} For $G=\Aut(\QQ,<)$ we have $\RR=\AP(G)\subsetneq\WAP(G)\subsetneq\Tame_u(G)=\UC(G)$.\end{cor}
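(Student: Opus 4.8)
The plan is to read off each of the four (in)equalities from the correspondences (1)--(3) recalled at the beginning of this section, combined with the quantifier-elimination description of $\UC(G)$ and the stability/$\NIP$ status of the two basic relations $x=y$ and $x<y$ on $(\QQ,<)$. The corollary is essentially a summary of the facts established in the paragraph immediately preceding it, so the work consists in matching each generating family to the right algebra.

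First I would establish $\Tame_u(G)=\UC(G)$. By quantifier elimination, $\UC(G)$ is the closed unital $G$-invariant algebra generated by the functions $g\mapsto(a=gb)$ and $g\mapsto(a<gb)$ for $a,b\in\QQ$. The formula $x=y$ is stable and $x<y$ is $\NIP$; since stability implies $\NIP$, both generating families consist of functions lying in $\Tame_u(G)$ by the correspondence (3). As $\Tame_u(G)$ is a uniformly closed unital $G$-invariant subalgebra of $\UC(G)$, it must then contain the whole algebra they generate, giving $\Tame_u(G)=\UC(G)$.

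Next come the three remaining claims, each witnessed by one basic function. For $\WAP(G)\subsetneq\Tame_u(G)$: the relation $x<y$ is unstable, so by correspondence (2) (equivalently Fact \ref{stable is WAP}) the function $g\mapsto(a<gb)$ is not $\WAP$, yet it lies in $\UC(G)=\Tame_u(G)$. For $\AP(G)=\RR$: the argument given just above the corollary shows that any formula algebraic on $[a]\times[b]$ yields a constant function $g\mapsto f(a,gb)$, so by correspondence (1) every element of $\AP(G)$ is constant (one may alternatively invoke the extreme amenability of $G$). Finally, for $\AP(G)\subsetneq\WAP(G)$: the relation $x=y$ is stable, so $g\mapsto(a=gb)$ belongs to $\WAP(G)$ by correspondence (2), and it is plainly non-constant, hence not in $\AP(G)=\RR$.

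There is no genuine obstacle at the level of the corollary itself: once the dictionary (1)--(3) is in place, each inclusion and each strictness reduces to inspecting a single function. The real content has already been discharged in the preceding results, namely the identification of tame functions with $\NIP$ formulas (Proposition \ref{tameNIP}) and the proof that algebraic formulas over an indiscernible sequence produce constant functions, which in turn relies on the quantifier elimination of $(\QQ,<)$ to exhibit the required indiscernible sequences. One need not appeal to the full identification of $\WAP(G)$ from \cite{bentsa}; the non-$\WAP$ witness $g\mapsto(a<gb)$ and the non-$\AP$ witness $g\mapsto(a=gb)$ suffice for the two strict inclusions.
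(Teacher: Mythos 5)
Your proposal is correct and follows essentially the same route as the paper: quantifier elimination to reduce $\UC(G)$ to the generators $g\mapsto(a=gb)$ and $g\mapsto(a<gb)$, the $\NIP$/stability status of $x<y$ and $x=y$ together with the dictionary (1)--(3) for the two equalities and the strictness of $\WAP\subsetneq\Tame_u$, and the indiscernible-sequence argument (or extreme amenability) for $\AP(G)=\RR$. No gaps.
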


\medskip

The situation is different for the random graph $RG$, the unique countable, homogeneous, universal graph. It has quantifier elimination, which in this case implies that $\UC(\Aut(RG))$ is the closed unital algebra generated by the functions of the form $g\mapsto (a=gb)$ and $g\mapsto (a\mathrel{R}gb)$ (where $R$ denotes the adjacency relation of the graph). Also, stable formulas on $[a]\times [b]$ are again exactly those expressible in the reduct of $RG$ to the identity relation (\cite{bentsa}, Example 6.1). But in this case no other formula is $\NIP$ on $[a]\times [b]$.

\begin{lem} On the random graph, every classical $\NIP$ formula is stable.\end{lem}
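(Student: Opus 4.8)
The plan is to prove the contrapositive: if a classical formula $f(x,y)$ on the random graph is \emph{unstable} on $[a]\times[b]$, then it has the independence property there, i.e.\ it fails to be $\NIP$. The key structural fact I would exploit is quantifier elimination for $RG$: the type of a finite tuple is determined by its quantifier-free type, which records only the equality pattern and the adjacency pattern among its coordinates. Combined with the homogeneity and universality of $RG$, this gives enormous freedom in realizing prescribed configurations, which is precisely what is needed to build independent families.

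First I would reduce, via Theorem \ref{cTame=Tame} and the surrounding discussion, to the case where $f$ is $\{0,1\}$-valued and $a,b$ are finite tuples; by the preceding analysis of $RG$ the only stable formulas on $[a]\times[b]$ are those expressible using equality alone, so an unstable $f$ must genuinely involve the adjacency relation $R$. Using Fact \ref{sequence indexed by Q} (the order property produces an indiscernible-like configuration indexed by $\QQ$ and $\RR$), I would extract from the order property an $\QQ$-indexed sequence $(a_i)$ together with witnessing parameters, and distinct values $r\neq s$. The aim is to reorganize these witnesses into an \emph{independent} pattern: for each subset $I\subset\omega$ I must produce some $b_I$ realizing, over the $(a_i)$, the adjacency configuration prescribed by $I$ (connected exactly to those $a_i$ with $i\in I$, say), so that $f(a_i,b_I)$ takes value $r$ when $i\in I$ and $s$ otherwise, matching the failure of condition (1) in Proposition \ref{NIP}.

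The main engine is the extension property of $RG$: for any finite disjoint sets of vertices $U,V$ there is a vertex adjacent to everything in $U$ and to nothing in $V$. By $\emptyset$-saturation (or by passing to a suitable elementary extension), this lets me realize, for every $I\subset\omega$, a vertex $b_I$ whose adjacency pattern to the indiscernible sequence $(a_i)_{i<\omega}$ is exactly the indicator of $I$. Because $RG$ has quantifier elimination and the $a_i$ are indiscernible, the value $f(a_i,b_I)$ depends only on the quantifier-free type of $(a_i,b_I)$, which in turn is governed by whether $i\in I$; so one checks that $f(a_i,b_I)$ alternates between the two values witnessing instability, giving the independence property directly. I would then invoke Proposition \ref{NIP} to conclude $f$ is not $\NIP$ on $[a]\times[b]$, contradicting the hypothesis.

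The delicate step, and the one I expect to be the main obstacle, is arranging the adjacency between the coordinates of the tuple $a_i$ and the vertex $b_I$ to be simultaneously consistent with indiscernibility of $(a_i)_{i<\omega}$ \emph{and} with the independent realization over all of $\omega$: the extension property guarantees adjacency to finitely many specified vertices, so one must feed the infinitely many conditions $\{$``$b_I$ connected to $a_i$ iff $i\in I$''$\}$ through approximate finite satisfiability and saturation, while ensuring the quantifier-free type of each finite subtuple $(a_{i_1},\dots,a_{i_k},b_I)$ remains the one that forces $f$ to read off the value dictated by $I$. Getting the bookkeeping right so that the single pair $(r,s)$ extracted from the order property serves uniformly across all $I$—rather than varying with the configuration—is where care is required; the homogeneity of $RG$ together with quantifier elimination is exactly what makes this uniformity possible.
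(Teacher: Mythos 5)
Your argument is correct in outline, but it takes a genuinely different route from the paper. The paper disposes of the lemma in two lines by citing general theory: Shelah's Theorem II.4.7 (an unstable $\NIP$ formula yields a formula with the strict order property) together with the fact that the theory of the random graph is simple and hence admits no formula with the strict order property. You instead give a direct combinatorial construction of an independent family from the order property, using quantifier elimination and the extension property of $RG$. This works, and the delicate point you flag is exactly the right one: since $a$ and $b$ are finite tuples, the order property produces two \emph{bipartite} adjacency/equality patterns $P_r$ and $P_s$ between the coordinates of $a$ and of $b$ (not a single edge bit), and one must check that prescribing $P_r$ against $a_i$ for $i\in I$ and $P_s$ against $a_i$ for $i\notin I$ is jointly consistent. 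The saving observation is that each $b_j$ in the order-property configuration already realizes $P_r$ against infinitely many $a_i$ and $P_s$ against infinitely many others, so $P_r$ and $P_s$ must agree on every constraint involving a coordinate that is constant along the indiscernible sequence; they can only differ on coordinates that are injective along the sequence, and those are distinct vertices for distinct $i$, so the mixed prescriptions are realizable by the extension property plus compactness, and QE then reads off the values $r$ and $s$. What your approach buys is a self-contained, quantitatively explicit proof that exposes \emph{why} the random graph admits no unstable $\NIP$ formula; what the paper's approach buys is brevity and immediate generality (the same citation settles the question for any simple theory, e.g.\ it would cover other homogeneous structures with free amalgamation without redoing the combinatorics). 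Your appeal to Theorem \ref{cTame=Tame} to reduce to finite tuples is unnecessary---a classical formula is automatically in finite variables, as noted just before that theorem---but harmless.
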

\begin{proof} Theorem 4.7 in \cite[Ch. II]{sheClassification} shows that if there is an unstable $\NIP$ formula then there is a formula with the \emph{strict order property}. The theory of the random graph, being \emph{simple}, does not admit a formula with the strict order property; see \cite{tenzie}, Corollary 7.3.14 and Exercise 8.2.4.\end{proof}

It follows for $G=\Aut(RG)$ that $c\Tame_u(G)=c\WAP(G)$. Now we argue that $RG$ has definable extensions of types over finite sets, whence $\Tame_u(G)=\WAP(G)$ by Theorem \ref{cTame=Tame}. For any $a\in {RG}^\omega$ and $n<\omega$, the \emph{free amalgam} of $a$ and $RG$ over $a_{<n}$ is a graph containing $RG$ and a copy $a'\simeq a$ such that $a_{<n}=a'_{<n}$ and, for every $i\geq n$, $a_i$ is not $R$-related to any element of $RG$ outside $a_{<n}$. The homogeneity and universality of $RG$ ensure that such a copy $a'$~is realized as a tuple in some elementary extension of $RG$. Since $RG$ has quantifier elimination it is clear that $a'$ realizes $\tp(a/a_{<n})$ and that the type $\tp(a'/RG)$ is $a_{<n}$-invariant, thus Lemma \ref{defts via invariant types} applies. We can conclude that $\Tame_u(G)$ is the closed unital algebra generated by the functions of the form $g\mapsto (a=gb)$, $a,b\in RG$. An example of a non-tame function in $\UC(G)$ is of course $g\mapsto (a\mathrel{R}gb)$.

If $f(x,y)$ is a formula algebraic on $[a]\times [b]$ and $b'\in [b]$, we can take a sequence $(a^i)_{i<\omega}$ of disjoint copies of $a$ such that $a=a^0$ and no element of $a^i$, $i\geq 1$, is $R$-related to an element of $b$, $b'$ nor $a^j$ for $j\neq i$. It follows by quantifier elimination that $(a^i)_{i<\omega}$ is indiscernible and that $f(a^1,b)=f(a^1,b')$. Since, by hypothesis, $(f(a^i,b))_{i<\omega}$ and $(f(a^i,b'))_{i<\omega}$ must be constant, we obtain $f(a,b)=f(a,b')$. Thus $g\mapsto f(a,gb)$ is constant.

\begin{cor} For $G=\Aut(RG)$ we have $\RR=\AP(G)\subsetneq\WAP(G)=\Tame_u(G)\subsetneq\UC(G)$.\end{cor}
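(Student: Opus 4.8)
The plan is to prove the four assertions $\RR=\AP(G)$, $\AP(G)\subsetneq\WAP(G)$, $\WAP(G)=\Tame_u(G)$ and $\Tame_u(G)\subsetneq\UC(G)$ by assembling the facts gathered in the preceding discussion. The common framework is the dictionary recalled at the start of this section (via Propositions \ref{AP formulas} and \ref{tameNIP} and Fact \ref{stable is WAP}): a $\UC$ function $g\mapsto f(a,gb)$ lies in $\AP(G)$, $\WAP(G)$ or $\Tame_u(G)$ exactly when $f(x,y)$ is algebraic, stable or $\NIP$ on $[a]\times[b]$, together with the fact that, by quantifier elimination, $\UC(G)$ is generated by the functions coming from the classical relations $x=y$ and $x\mathrel{R}y$.

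For the central equality $\WAP(G)=\Tame_u(G)$ the first step is to invoke the lemma that on $RG$ every classical $\NIP$ formula is stable, which yields $c\WAP(G)=c\Tame_u(G)$ at the level of the algebras generated by classical formulas. Lifting this to the full algebras is where I expect the main work to lie: the plan is to apply Theorem \ref{cTame=Tame}, whose hypothesis demands that $RG$ have definable extensions of types over finite sets, and to verify this hypothesis through Lemma \ref{defts via invariant types}. Concretely, for $a\in RG^\omega$ and $n<\omega$ I would take the \emph{free amalgam} of $a$ and $RG$ over $a_{<n}$, in which no element $a_i$ with $i\geq n$ is joined by an edge to any vertex of $RG$ outside $a_{<n}$; universality and homogeneity realize this amalgam in an elementary extension, quantifier elimination shows the resulting copy $a'$ realizes $\tp(a/a_{<n})$, and the absence of new edges makes $\tp(a'/RG)$ invariant under all automorphisms fixing $a_{<n}$. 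Lemma \ref{defts via invariant types} then yields the hypothesis of Theorem \ref{cTame=Tame}, and hence $\WAP(G)=\Tame_u(G)$.

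To show $\AP(G)=\RR$ it suffices to see that every algebraic classical formula produces a constant function. Reducing by Theorem \ref{cTame=Tame} to a classical $f(x,y)$ algebraic on $[a]\times[b]$ with $a,b$ finite, and given $b'\in[b]$, the plan is the disjoint-copies argument: pick copies $(a^i)_{i<\omega}$ of $a$ with $a^0=a$ and with each $a^i$ (for $i\geq 1$) sharing no edge with $b$, with $b'$, or with any $a^j$ for $j\neq i$. Quantifier elimination makes $(a^i)_{i<\omega}$ indiscernible and forces $f(a^1,b)=f(a^1,b')$, while algebraicity makes the sequences $(f(a^i,b))_{i}$ and $(f(a^i,b'))_{i}$ constant; chaining these gives $f(a,b)=f(a^1,b)=f(a^1,b')=f(a,b')$, so $g\mapsto f(a,gb)$ is constant.

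The two strict inclusions then come from explicit witnesses. For $\AP(G)\subsetneq\WAP(G)$, the function $g\mapsto (a=gb)$ is stable, hence weakly almost periodic, but is non-constant for $a\neq b$ and therefore not in $\RR=\AP(G)$. For $\Tame_u(G)\subsetneq\UC(G)$, the function $g\mapsto (a\mathrel{R}gb)$ lies in $\UC(G)$ by Proposition \ref{Form=UC} yet is not tame: adjacency is unstable on $RG$, so by the lemma above it fails to be $\NIP$ on $[a]\times[b]$ --- indeed the extension axioms of the random graph exhibit the independence property for $x\mathrel{R}y$ directly. Together these four points establish the corollary.
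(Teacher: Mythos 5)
Your proposal is correct and follows essentially the same route as the paper: the lemma that classical $\NIP$ formulas on $RG$ are stable, the free-amalgam verification of definable extensions of types over finite sets feeding into Lemma \ref{defts via invariant types} and Theorem \ref{cTame=Tame}, the disjoint-copies indiscernibility argument for $\AP$-triviality, and the witnesses $g\mapsto(a=gb)$ and $g\mapsto(a\mathrel{R}gb)$ for the two strict inclusions. The only (harmless) addition is your direct observation that the extension axioms exhibit the independence property for $x\mathrel{R}y$.
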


\medskip

The group $G=\Homeo(2^\omega)$ of homeomorphisms of the Cantor space, carrying the compact-open topology, can be identified naturally with the automorphism group of the boolean algebra $\BB$ of clopen subsets of $2^\omega$, with the topology of pointwise convergence. Up to isomorphism, $\BB$ is the unique countable atomless boolean algebra. We consider it as a structure in the language of boolean algebras, that is, we have basic functions $\wedge,\vee\colon\BB^2\to\BB$ and $\neg\colon\BB\to\BB$ for meet, joint and complementation in the algebra, and constants $0$ and $1$ for the minimum and maximum of~$\BB$. In~this language $\BB$ admits quantifier elimination (see \cite{poizatCours}, Th\'eor\`eme 6.21). This means that two tuples $c,d$ of the same length have the same type over $\emptyset$ if and only if $c\simeq d$ (i.e.\ the map $c_i\mapsto d_i$ extends to an isomorphism of the generated boolean algebras). It also implies that $\UC(G)$ is the algebra generated by the functions of the form $g\mapsto (0=t(a,gb))$, where $t(x,y)$ is a boolean term in finite variables, i.e.\ a function $\BB^n\times\BB^m\to\BB$ constructed with the basic boolean operations $\wedge,\vee,\neg$.

Here it is easy to see that $0=x\wedge y$ is not $\NIP$ on $[a]\times[b]$ (for $a,b\notin\{0,1\}$), so the function $g\mapsto (0=a\wedge gb)$ is not tame. With this in mind, and following the idea of \cite{bentsa}, Example 6.3, one sees the following.

\begin{lem} On the countable atomless boolean algebra, every classical $\NIP$ formula is stable.\end{lem}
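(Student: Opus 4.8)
The argument used just above for the random graph is not available here: the atomless boolean algebra \emph{has} the strict order property (the lattice order $x\leq y$, i.e. $x\wedge\neg y=0$, is definable), so one cannot invoke simplicity to exclude $\textnormal{SOP}$ and then apply Shelah's gap theorem \cite[Ch.~II, Th.~4.7]{sheClassification}. Instead I would argue directly from the quantifier elimination of $\BB$. By \cite[Th.~6.21]{poizatCours} a classical formula $f(x,y)$ in finite variables $x\in\BB^n$, $y\in\BB^m$ depends only on the quantifier-free type of $(x,y)$; putting each atomic condition in disjunctive normal form, this type is recorded by the bits $[\,u_\epsilon(x)\wedge v_\eta(y)=0\,]$, where $u_\epsilon=\bigwedge_i x_i^{\epsilon_i}$ and $v_\eta=\bigwedge_j y_j^{\eta_j}$ range over the monomials in $x$ and in $y$. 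Thus $f(a,b)=\Phi\big(([\,u_\epsilon(a)\wedge v_\eta(b)=0\,])_{\epsilon,\eta}\big)$ for a fixed boolean function $\Phi$; equivalently, $f(a,b)$ is determined by the bipartite \emph{meeting pattern} between the partition of unity $(u_\epsilon(a))_\epsilon$ and the partition $(v_\eta(b))_\eta$.

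The next point to record is that atomlessness makes these meeting patterns maximally flexible. Given a target bipartite graph on the index sets $\{0,1\}^n\sqcup\{0,1\}^m$, one partitions $1$ into nonzero cells indexed by its edges and lets each monomial be the join of the cells incident to its index; atomlessness supplies the cells, and this realizes \emph{exactly} the prescribed graph. More generally, by quantifier elimination together with the homogeneity and $\aleph_0$-categoricity of $\BB$, any finitely consistent family of prescribed meeting patterns for sequences $(a_i)_i$, $(b_j)_j$ is realized in $\BB$ or in a saturated elementary extension. This splitting freedom is exactly what a pure linear order lacks, and it is why the single disjointness relation $u\wedge v=0$ already has $\textnormal{IP}$, as observed just before the lemma.

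With this in hand the plan is to prove the contrapositive: a classical formula with the order property on $[a]\times[b]$ has $\textnormal{IP}$. Assuming $f$ is unstable, I would apply Fact \ref{sequence indexed by Q} and, thinning by Ramsey's theorem as in the proof of Proposition \ref{NIP}, obtain a half-graph configuration with $f(a_i,b_j)=r$ for $i<j$ and $f(a_i,b_j)=s$ for $j\leq i$ ($r\neq s$), with $(a_i)_i$ indiscernible. By quantifier elimination the flip of $f$ across the diagonal is caused by a change in some meeting bit $[\,u_{\epsilon_0}(a_i)\wedge v_{\eta_0}(b_j)=0\,]$, while indiscernibility forces the pattern among the $a_i$ alone to be constant. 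Using the realizability of the previous paragraph I would then refine the relevant cells and, for each $I\subseteq\omega$, construct a parameter $b_I$ whose meeting pattern against $(a_i)_i$ reproduces the independent configuration $f(a_i,b_I)=r$ iff $i\in I$. This is precisely the pattern forbidden in Proposition \ref{NIP}(1), contradicting that $f$ is $\textnormal{NIP}$ on $[a]\times[b]$; by Lemma \ref{symmetry} it suffices to carry the construction out on one side.

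The main obstacle is isolating a single ``responsible'' bit: since $\Phi$ is an arbitrary boolean combination, several $\textnormal{IP}$ cross-bits could in principle cancel into an unstable yet $\textnormal{NIP}$ formula (telescoping exclusive-ors being the prototype to exclude). The way through is to use the indiscernibility of $(a_i)_i$ to stabilize the meeting pattern on every finite subset, thereby reducing the behaviour of $\Phi$ on the relevant configurations to its dependence on the one cell $[\,u_{\epsilon_0}(x)\wedge v_{\eta_0}(y)=0\,]$, and then to invoke atomlessness and homogeneity of $\BB$ to toggle \emph{that} cell while holding the others fixed, producing the witnesses $b_I$ independently. I expect this cancellation-control step, rather than the realizability or the half-graph extraction, to be where the real work lies.
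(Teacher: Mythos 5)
Your overall strategy is the same as the paper's: by quantifier elimination, after normalizing so that $a$ and $b$ are each finite partitions of $1$, a classical formula $f(a,b)$ depends only on the zero-pattern of the meets $a_i\wedge b_j$, and atomlessness lets one split a nonzero meet into $l+1$ cells $c_0,\dots,c_l$ and redistribute them among the four corners $a_{i_0},a_{i_1},b_{j_0},b_{j_1}$ so as to realize every independence pattern while keeping each modified tuple in the orbits $[a]$, $[b]$. Your preliminary observations (SOP rules out the random-graph shortcut; realizability of meeting patterns) are correct.

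The problem is that the step you yourself single out as ``where the real work lies'' --- isolating a single responsible cross-bit --- is precisely the step you do not carry out, and it is not a routine consequence of indiscernibility. The paper imports it wholesale from \cite[Example 6.3]{bentsa}: instability of $f$ on $[a]\times[b]$ yields some $b'\in[b]$ with $f(a,b)\neq f(a,b')$ whose zero-pattern against $a$ agrees with that of $b$ at \emph{every} position except one $(i_0,j_0)$, \emph{and} satisfying the nonvanishing side conditions $a_{i_0}\wedge b_{j_1}\neq 0$, $a_{i_1}\wedge b_{j_0}\neq 0$, $a_{i_1}\wedge b_{j_1}\neq 0$ (all four meets nonzero for $b'$). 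This is obtained by an interpolation argument (flip the differing bits one at a time through realizable intermediate configurations; since $f$ changes value overall it must change at some single flip), not by ``stabilizing the meeting pattern'' via indiscernibility of the $a_i$, which controls the pattern among the $a_i$ alone but says nothing about which cross-bits flip at the diagonal of your half-graph, nor that only one does. Moreover the side conditions are not cosmetic: ``toggling the cell $a_{i_0}\wedge b_{j_0}$ while holding the others fixed'' is not an available operation in a boolean algebra --- to make that meet nonzero you must adjoin a common nonzero piece to both $a_{i_0}$ and $b_{j_0}$, and that piece has to be carved out of a nonzero reservoir $a_{i_1}\wedge b_{j_1}$ without altering the remaining pattern or the types of $a$ and $b$; this is exactly what the paper's $a^k_{i_0}=a_{i_0}\vee c_k$, $a^k_{i_1}=a_{i_1}\wedge\neg c_k$ bookkeeping achieves. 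Without the single-bit reduction and these nonvanishing guarantees, your construction of the witnesses $b_I$ does not go through as described.
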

\begin{proof}
Let $f(x,y)$ be a classical formula. If $f$ is not stable, then it is not stable on $[a]\times [b]$ for some tuples $a$, $b$ from $\BB$. We may modify $f(x,y)$, $a$ and $b$ (without changing the function $g\mapsto f(a,gb)$) so that the elements of the tuple $a$ form a finite partition of $1$, and the same for $b$.

Say $a\in\BB^n$, $b\in\BB^m$. In \cite{bentsa}, Example 6.3, it is shown that $f(x,y)$ is unstable on $[a]\times [b]$ if and only if there is $b'\in[b]$ such that $f(a,b)\neq f(a,b')$ but the $xy$-tuples $ab$ and $ab'$ satisfy the same formulas of the form $t(x)=s(y)$ for boolean terms $t,s$. Moreover, it is shown that in this case one can choose $b'$ so that, for some indices $i_0,i_1<n$, $j_0,j_1<m$, we have (possibly changing $b$ by a conjugate):
\begin{enumerate}
\item $a_{i_0}\wedge b_{j_0}=0$, $a_{i_0}\wedge b_{j_1}\neq 0$, $a_{i_1}\wedge b_{j_0}\neq 0$ and $a_{i_1}\wedge b_{j_1}\neq 0$;
\item $a_i\wedge b'_j\neq 0$ for $i\in\{i_0,i_1\}$ and $j\in\{j_0,j_1\}$;
\item $a_i\wedge b_j=0$ if and only if $a_i\wedge b'_j=0$, for every pair $(i,j)\neq (i_0,j_0)$;
\item $f(a,b)\neq f(a,b')$.
\end{enumerate}

Now we fix an arbitrary $l<\omega$ and choose a partition $c_0,\dots,c_l$ of $a_{i_1}\wedge b_{j_1}$. For each $k<l$ we let $a^k_{i_0}=a_{i_0}\vee c_k$ and $a^k_{i_1}=a_{i_1}\wedge\neg c_k$. We also let $a^k_i=a_i$ for every $i\notin\{i_0,i_1\}$, thus defining a tuple $a^k\in [a]$. Similarly, for every $K\subset l$ we let $b^K_{j_0}=b_{j_0}\vee(\bigvee_{k\in K}c_k)$ and $b^K_{j_1}=b_{j_1}\wedge\neg(\bigvee_{k\in K}c_k)$. We let $b^K_j=b_j$ for $j\notin\{j_0,j_1\}$, and this defines a tuple $b^K\in [b]$. By quantifier elimination, the type of the tuple $a^kb^K$ is determined by the set of pairs $i,j$ such that $a^k_i\wedge b^K_j=0$. It follows that
\begin{equation*}
\text{$f(a^k,b^K)=f(a,b')$ if $k\in K$, and $f(a^k,b^K)=f(a,b)$ if $k\notin K$.}
\end{equation*}
Hence $f(x,y)$ is not $\NIP$ on $[a]\times [b]$.
\end{proof}

The lemma shows that $c\Tame_u(G)=c\WAP(G)$ for $G=\Aut(\BB)$, and that this algebra is generated by the functions of the form $g\mapsto (a=gb)$. Indeed, the proof shows that a $\NIP$ formula on $[a]\times [b]$ is a combination of formulas of the kind $t(x)=s(y)$ for boolean terms $t,s$; then simply note that $s(gb)=gs(b)$, so the function on $G$ associated to a formula of latter kind is $g\mapsto(c=gd)$ where $c=t(a)$ and $d=s(b)$. Next we show that $\BB$ has definable extensions of types over finite sets, in order to conclude, by Theorem \ref{cTame=Tame}, that these functions actually generate $\Tame_u(G)$.

Let $a\in\BB^\omega$, $n<\omega$; with no loss of generality we may assume that $a_{<n}$ is a partition of $1$. We consider the free amalgam of $a$ and $\BB$ over $a_{<n}$, which is a boolean algebra generated by $\BB$ together with a copy $a'\simeq a$ such that $a'_{<n}=a_{<n}$ and, for every $i<n$, every $d\in\BB$ and every $c$ in the boolean algebra generated by $a'$, we have $c\wedge a_i\wedge d\neq 0$ unless $c\wedge a_i=0$ or $a_i\wedge d=0$. Such a copy $a'$ is realized as a tuple in some elementary extension of $\BB$, and the type $\tp(a'/\BB)$ is clearly $a_{<n}$-invariant. By Lemma \ref{defts via invariant types}, $\BB$ has definable extensions of types over finite sets.

Finally, as with $\Aut(\QQ,<)$ and $\Aut(RG)$, we show that every $\AP$ function on $\Homeo(2^\omega)$ is constant. If $f(x,y)$ is algebraic on $[a]\times [b]$ and $b'\in [b]$, we can find copies $a^i$ of $a$ such that $a=a^0$ and each $a^i$, $i\geq 1$, forms a free amalgam with $B_i$ over $\emptyset$, where $B_i$ is the algebra generated by $b$, $b'$ and all $a^j$, $j<i$. That is, $c\wedge d\neq 0$ for every non-zero $d\in B_i$ and every non-zero $c$ in the algebra generated by $a^i$. Then $(a^i)_{i<\omega}$ is indiscernible and $f(a^1,b)=f(a^1,b')$. By hypothesis $(f(a^i,b))_{i<\omega}$ and $(f(a^i,b'))_{i<\omega}$ are constant, and therefore $f(a,b)=f(a,b')$.

\begin{cor} For $G=\Homeo(2^\omega)$ we have $\RR=\AP(G)\subsetneq\WAP(G)=\Tame_u(G)\subsetneq\UC(G)$.\end{cor}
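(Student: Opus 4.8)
The plan is to assemble the four assertions from the facts established immediately above, treating the two equalities and the two strict inclusions separately.

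First I would prove the central equality $\WAP(G)=\Tame_u(G)$. The preceding lemma shows that on the countable atomless boolean algebra every classical $\NIP$ formula is already stable, which gives $c\Tame_u(G)=c\WAP(G)$ directly from the definitions of these algebras. To upgrade this to the full algebras I would invoke Theorem~\ref{cTame=Tame}, whose hypothesis requires that $\BB$ have definable extensions of types over finite sets. This is where the main work lies: for $a\in\BB^\omega$ and $n<\omega$ (taking $a_{<n}$ to be a partition of $1$ without loss of generality) one forms the free amalgam of $a$ and $\BB$ over $a_{<n}$, realised in an elementary extension as a copy $a'\simeq a$ with $a'_{<n}=a_{<n}$ and maximal independence elsewhere; quantifier elimination ensures $a'$ realises $\tp(a/a_{<n})$ and that $\tp(a'/\BB)$ is $a_{<n}$-invariant, so Lemma~\ref{defts via invariant types} applies. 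With this, Theorem~\ref{cTame=Tame} yields $\Tame_u(G)=c\Tame_u(G)=c\WAP(G)=\WAP(G)$, and identifies this common algebra as the closed unital algebra generated by the functions $g\mapsto(a=gb)$.

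Next I would establish $\RR=\AP(G)$, i.e.\ that every almost periodic function is constant. By Theorem~\ref{cTame=Tame} (the equality $c\AP(G)=\AP(G)$, which holds unconditionally) it suffices to treat $g\mapsto f(a,gb)$ for a classical formula $f(x,y)$ algebraic on $[a]\times[b]$ with $a,b$ finite. Given any $b'\in[b]$, I would build a sequence of disjoint copies $a^i$ of $a$ with $a=a^0$, arranged so that each $a^i$ ($i\geq 1$) is in free-amalgam position over $\emptyset$ with the algebra generated by $b$, $b'$ and the earlier copies $a^j$. Quantifier elimination makes $(a^i)_{i<\omega}$ indiscernible and forces $f(a^1,b)=f(a^1,b')$; since algebraicity makes $(f(a^i,b))_i$ and $(f(a^i,b'))_i$ constant, one concludes $f(a,b)=f(a,b')$. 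As $b'\in[b]$ was arbitrary, $g\mapsto f(a,gb)$ is constant, giving $\AP(G)=\RR$.

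Finally I would derive the two strict inclusions from explicit witnesses. For $\AP(G)\subsetneq\WAP(G)$, the identity relation $x=y$ is stable, so any non-constant function $g\mapsto(a=gb)$ lies in $\WAP(G)\setminus\RR$. For $\Tame_u(G)\subsetneq\UC(G)$, fix $a,b\notin\{0,1\}$: the formula $0=x\wedge y$ fails to be $\NIP$ on $[a]\times[b]$ (a boolean-independence pattern is produced just as in the proof of the preceding lemma), so by Proposition~\ref{tameNIP} the function $g\mapsto(0=a\wedge gb)$ belongs to $\UC(G)\setminus\Tame_u(G)$. The delicate point throughout is the verification of definable extensions of types over finite sets: everything else is a direct reading-off of the structural facts, but that property is exactly what licenses the reduction from classical finite-variable formulas to arbitrary $\UC$ functions through Theorem~\ref{cTame=Tame}.
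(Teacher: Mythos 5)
Your proposal is correct and follows essentially the same route as the paper: the lemma that classical $\NIP$ formulas on $\BB$ are stable combined with Theorem~\ref{cTame=Tame} (whose hypothesis is checked via the free amalgam and Lemma~\ref{defts via invariant types}) for the middle equality, the free-amalgam indiscernible sequence argument for $\AP$-triviality, and the witnesses $x=y$ and $0=x\wedge y$ for the two strict inclusions. You have correctly identified the verification of definable extensions of types over finite sets as the step carrying the real weight.
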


\medskip

The previous examples come from classical structures; we consider now a purely metric one: the Urysohn sphere $\mathbb{U}_1$. This is, up to isometry, the unique separable, complete and homogeneous metric space of diameter 1 that is universal for countable metric spaces of diameter at most 1: any such metric space can be embedded in $\mathbb{U}_1$. As a metric structure with no basic predicates (other than the distance), $\mathbb{U}_1$ is $\aleph_0$-categorical and has quantifier elimination, which in this case means that the type of a tuple $b$ depends only on the isomorphism class of $b$ as a metric space; see \cite[\textsection 5]{usvGeneric}. It also says that $\UC(\Iso(\mathbb{U}_1))$ is generated by the functions of the form $g\mapsto d(a,gb)$ for $a,b\in\mathbb{U}_1$.

We show that $\Iso(\mathbb{U}_1)$ is $\Tame_u$-trivial.

\begin{theorem}\label{TameIsoU is trivial} Every function in $\Tame_u(\Iso(\mathbb{U}_1))$ is constant.\end{theorem}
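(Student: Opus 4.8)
The plan is to show that every function $h \in \Tame_u(\Iso(\mathbb{U}_1))$ is constant by reducing, via the correspondence established in the previous sections, to a purely combinatorial statement about the metric structure $\mathbb{U}_1$. By Proposition \ref{tameNIP} together with the description of $\UC(G)$ for $G = \Iso(\mathbb{U}_1)$, it suffices to prove that whenever a formula $f(x,y)$ is $\NIP$ on $[a] \times [b]$, the associated function $g \mapsto f(a, gb)$ is in fact constant. Using Proposition \ref{NIP}(1), the $\NIP$ condition means one cannot find a sequence $(a_i)_{i<\omega} \subset [a]$ and realizations $(b_I)_{I \subset \omega}$ in an elementary extension, together with $r \neq s$, exhibiting the full independence pattern $f(a_i, b_I) = r$ iff $i \in I$. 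So the strategy is contrapositive: assuming $g \mapsto f(a,gb)$ is \emph{not} constant, I will construct precisely such an independence pattern, thereby contradicting $\NIP$.

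First I would reduce to the case of finitely many variables. Since $\mathbb{U}_1$ is $\aleph_0$-categorical with quantifier elimination, I expect to be able to invoke the approximation machinery of Section \ref{Examples} (Theorem \ref{cTame=Tame}), once I verify that $\mathbb{U}_1$ has definable extensions of types over finite sets. This last point should follow from Lemma \ref{defts via invariant types}-style reasoning adapted to the metric setting: given $a \in \mathbb{U}_1^\omega$ and $n < \omega$, one builds an amalgam of $a$ with $\mathbb{U}_1$ over $a_{<n}$ in which the new points $a_i$ ($i \geq n$) sit at the maximal allowable distance $1$ from everything outside $a_{<n}$, and universality/homogeneity of $\mathbb{U}_1$ realizes this in an elementary extension with an $a_{<n}$-invariant type. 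Thus I may assume $f$ depends only on finite tuples $a \in \mathbb{U}_1^k$, $b \in \mathbb{U}_1^m$, and that $f$ is essentially a continuous function of the matrix of distances $(d(a_i, b_j))$.

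The heart of the argument is the combinatorial construction. Suppose $b' \in [b]$ with $f(a,b) \neq f(a,b')$. The key feature of the Urysohn sphere is its \emph{amalgamation with slack}: any partial metric with values in $[0,1]$ satisfying the triangle inequality extends, and one has complete freedom to place new points at mutual distance close to $1$ (or indeed any admissible value). I would use this to build indiscernible copies: take many disjoint isometric copies $a^i$ of $a$, all pairwise far apart and far from $b, b'$, so that by quantifier elimination the sequence $(a^i)$ is indiscernible and $f(a^i, b) = f(a, b)$, $f(a^i, b') = f(a, b')$ for each $i$. Then, interpolating between the configurations realizing $b$ and $b'$, I would produce for each subset $I \subset \omega$ a tuple $b_I$ whose distances to each $a^i$ match those of $b$ when $i \notin I$ and those of $b'$ when $i \in I$; the freedom of Urysohn amalgamation is exactly what guarantees these mixed configurations are consistent and realizable in an elementary extension. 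By quantifier elimination this yields $f(a^i, b_I) = f(a,b')$ for $i \in I$ and $f(a^i, b_I) = f(a,b)$ for $i \notin I$, i.e.\ the independence pattern with $r = f(a,b')$, $s = f(a,b)$, contradicting that $f$ is $\NIP$ on $[a] \times [b]$.

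The main obstacle I anticipate is verifying that the interpolating configurations $b_I$ actually satisfy the triangle inequality simultaneously across all the copies $a^i$ --- one must choose the distances among the $a^i$, and between the $a^i$ and the $b_I$, carefully enough that a single consistent metric (hence a genuine realization in $\mathbb{U}_1$ or an elementary extension) exists for every $I$ at once. Separating the copies $a^i$ at distance near $1$ should decouple the constraints, so that the only binding triangle inequalities are the local ones within each $\{a^i\} \cup \{b_I\}$ block, which already hold by construction from $b$ and $b'$. Making this decoupling precise, and confirming it survives the metric (rather than discrete) setting where one works up to $\epsilon$, is where the real care is needed; the rest is a direct application of the earlier correspondence theorems.
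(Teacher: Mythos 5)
Your overall strategy---reduce via Proposition \ref{tameNIP} to showing that a formula non-constant on $[a]\times[b]$ fails $\NIP$ there, and witness the failure by copies $a^i$ of $a$ together with mixed configurations $b^I$ realized by Urysohn amalgamation---is the paper's. But the step you yourself flag as delicate is exactly where your plan breaks, and the correct fix is the opposite of the one you propose. If the copies $a^i$ are placed at mutual distance near $1$ while each block $a^ib^I$ is kept isometric to $ab$ or $ab'$, the triangle inequality $d(a^i_n,a^j_m)\leq d(a^i_n,b^I_k)+d(b^I_k,a^j_m)$ fails as soon as some original distance $d(a_n,b_k)$ is small: the constraints do not decouple. (Your intermediate claim that $f(a^i,b)=f(a,b)$ for $a^i$ far from $b$ is also false---by quantifier elimination $f(a^i,b)$ depends on the joint metric type of $a^ib$, which changes when $a^i$ is moved away from $b$.) The paper does the reverse: the copies $a^i$ (and the $b^I$) are kept close together, coordinatewise only $\epsilon/2$ apart, and the slack needed for the triangle inequalities is manufactured beforehand by two preprocessing reductions that are missing from your proposal. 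First, $b$ is perturbed so that $d(a_n,a_m)\leq d(a_n,b_k)+d(b_k,a_m)-\epsilon$ and symmetrically, i.e.\ $a$ and $b$ are separated with slack $\epsilon$. Second---the key missing idea---one must arrange that the two configurations being mixed have nearly identical distance matrices: the paper builds a finite chain of copies of $b$, each pushed $\epsilon/2$ farther from $a$, ending at a copy at distance $1$ from every coordinate of $a$, where quantifier elimination forces $f(a,\tilde b)=f(a',\tilde b')$; hence $f$ already differs at two consecutive links of the chain, and one may assume $a=a'$ and $|d(a_n,b_m)-d(a_n,b'_m)|\leq\epsilon/2$. Only with both reductions in place do all the mixed spaces $\{a^i\}_{i<\omega}\cup\{b^I\}_{I\subset\omega}$ carry a consistent metric. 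Without some such compatibility between $ab$ and $ab'$ there is no realization of the independence pattern, so this is a genuine gap rather than a routine verification.

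A secondary problem: your reduction to finitely many variables via Theorem \ref{cTame=Tame} is not available, since that theorem and the ``definable extensions of types over finite sets'' machinery are set up for classical structures, whereas $\mathbb{U}_1$ is purely metric. Fortunately the reduction is unnecessary: the paper's construction works directly for countable tuples, because finitely many $\epsilon/2$-pushes already send every distance to $1$ regardless of the number of coordinates.
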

\begin{proof} Suppose $f(x,y)$ is not constant on $[a]\times [b]$, so we have $f(a,b)\neq f(a',b')$ where $a\simeq a'$ and $b\simeq b'$ as metric spaces. We will need to assume that the elements of $a$ are separated enough from the elements of $b$, (the same for $a',b'$), and that the metric space $ab$ is similar to $a'b'$; so we precise and justify this. Let $0<\epsilon<1$. Note first that, by the universality and homogeneity of the Urysohn sphere, for any tuples $x,y$ in $\mathbb{U}_1$ (or in an elementary extension thereof) we can find $\tilde{y}$ in an elementary extension such that $y\simeq \tilde{y}$, $d(y_n,\tilde{y}_n)=\epsilon$ and $d(x_n,\tilde{y}_m)=(d(x_n,y_m)+\epsilon)\wedge 1$ for all coordinates $n,m$ (where $r\wedge s$ denotes $\min(r,s)$). Note secondly that finitely many iterations of the process of replacing $y$ by $\tilde{y}$ eventually end with $d(x_n,y_m)=1$ for all $n,m$.

If we chose $\epsilon$ small enough and do one iteration of the previous process for $xy=ab$, by continuity of $f$ we can assume (replacing $ab$ by $a\tilde{b}$) that $$d(a_n,a_m)\leq d(a_n,b_k)+d(b_k,a_m)-\epsilon,\ d(b_n,b_m)\leq d(b_n,a_k)+d(a_k,b_m)-\epsilon$$ for all $n,m,k$ ---and still $f(a,b)\neq f(a',b')$. So we have separated the elements of $a$ from those of $b$, and we do the same for $a',b'$.

Next we iterate the process described above for $\epsilon/2$, starting with $xy=ab$, thus producing a finite sequence of copies of $b$, the last copy $\tilde{b}$ verifying $d(a_n,\tilde{b}_m)=1$ for all $n,m$. We do the same starting with $a',b'$, finishing with a copy $\tilde{b}'$ with the analogous property. Since the theory of $\mathbb{U}_1$ has quantifier elimination, we have $f(a,\tilde{b})=f(a',\tilde{b}')$. So $f$ differs in two consecutive steps of the process, and by replacing our tuples $ab$, $a'b'$ by these consecutive tuples we may assume also that $a=a'$ and $|d(a_n,b_m)-d(a_n,b'_m)|\leq\epsilon/2$ for all $n,m$.

With the previous assumptions in mind, we now construct a metric space containing a sequence $(a^i)_{i<\omega}$ of different copies of $a$ and, for each $I\subset\omega$, a copy $b^I$ of $b\simeq b'$ such that $a^ib^I\simeq ab$ if $i\in I$ and $a^ib^I\simeq ab'$ if $i\notin I$. For $i\neq j$, $I\neq J$ and each $n,m$ we define $d(a^i_n,a^j_m)=(d(a_n,a_m)+\epsilon/2)\wedge 1$, $d(b^I_n,b^J_m)=(d(b_n,b_m)+\epsilon/2)\wedge 1$. The triangle inequalities are satisfied; for example, for $a^i_n,a^j_m,b^I_k$, ${i\neq j}$, $i\in I$, we have $d(a^i_n,a^j_m)=(d(a_n,a_m)+\epsilon/2)\wedge 1\leq d(a_n,b_k)+d(b_k,a_m)-\epsilon/2\leq d(a^i_n,b^I_k)+d(b^I_k,a^j_m)$, and also $d(a^i_n,b^I_k)\leq(d(a^i_n,a^i_m)+\epsilon/2)\wedge 1+d(a^i_m,b^I_k)-\epsilon/2\leq d(a^i_n,a^j_m)+d(a^j_m,b^I_k)$. The other inequalities are proved similarly.

By the universality of the Urysohn sphere we can assume that the tuples $a^i$ lie in $\mathbb{U}_1$, the tuples $b^I$ in some elementary extension. By quantifier elimination, $f(a^i,b^I)=f(a,b)$ if $i\in I$ and $f(a^i,b^I)=f(a,b')$ if $i\notin I$. This shows that $f(x,y)$ is not $\NIP$ on $[a]\times [b]$. It follows that every tame function of the form $g\mapsto f(a,gb)$ is constant, which proves the theorem.
\end{proof}

In Question 7.10 from \cite{glameg10} it was asked whether the algebra $\Tame(G)$ separates points and closed subsets of $G$ for every Polish group $G$. As we have seen, this can fail drastically for the algebra $\Tame_u(G)$. Unfortunately, we do not know how big may the gap between $\Tame(G)$ and $\Tame_u(G)$ be.

\begin{question} Are there tame non-constant functions on $\Iso(\mathbb{U}_1)$? Is there a way to \emph{regularize} a (non-constant) function $f\in\RUC(G)$ to get (a non-constant) $\tilde{f}\in\UC(G)$, in such a manner that tameness is preserved?\end{question}

\medskip

Finally, we consider the group $G=H_+[0,1]$ of increasing homeomorphisms of $[0,1]$ with the compact-open topology ---which coincides on $G$ with those of pointwise or uniform convergence. In spite of not being naturally presented as an automorphism group of some $\aleph_0$-categorical metric structure, this group is Roelcke precompact. See \cite{uspComp}, Example 4.4, for a description of its Roelcke compactification.

The following result was explained to us by M. Megrelishvili.

\begin{theorem}\label{megrelishvili}
$\UC(H_+[0,1])\subset\Tame(H_+[0,1])$.
\end{theorem}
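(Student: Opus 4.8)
The goal is to show that every Roelcke uniformly continuous function on $G=H_+[0,1]$ is tame; equivalently, by Remark~\ref{Y to Z} and the cyclic $G$-space reformulation, that for each $h\in\UC(G)$ the orbit $Gh$ is sequentially precompact in $\RR^{G^{\UC}}$, using the description of tameness on compact systems given in Fact~\ref{tame}. The natural first move is to exploit the explicit description of the Roelcke compactification of $H_+[0,1]$ recorded in Uspenskij \cite{uspComp}, Example~4.4: points of $G^{\UC}$ can be encoded by certain monotone ``relations'' or closed monotone subsets of $[0,1]^2$ (limits of graphs of increasing homeomorphisms), and the functions in $\UC(G)$ are, via Proposition~\ref{Form=UC} or directly, generated by evaluation-type functions built from this monotone data. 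So I would begin by fixing such a concrete coordinate system on $G^{\UC}$ and describing a generating family of functions $h\in\UC(G)$ in those coordinates.

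First I would reduce to a generating subalgebra: since $\Tame(X)$ is a uniformly closed $G$-invariant subalgebra of $\RUC(X)$, it suffices to verify tameness for a family of functions that generates $\UC(G)$ as a closed algebra, namely the functions measuring, for fixed $s,t\in[0,1]$, the position of $g(s)$ relative to $t$ (the analogue of the $a<gb$ and $a=gb$ generators in the $\Aut(\QQ,<)$ example). The essential point is then the monotonicity built into $G$: each generator is, up to the coordinate change, an increasing function of a single real parameter moving along the monotone graph. This is exactly the situation that produces $\NIP$-type behavior rather than the independence pattern, so the plan is to show directly that no generator exhibits the combinatorial independence configuration that Fact~\ref{kerrli} (or Fact~\ref{tame}) rules out.

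Concretely, the key step is to prove sequential precompactness of $Gh$ by a Helly-type / monotonicity argument: given any sequence $(g_n)\subset G$, the associated sequence of monotone graphs in $[0,1]^2$ has a subsequence converging pointwise (at every continuity point, hence off a countable set) to a monotone limit relation, because bounded monotone functions form a sequentially compact family under pointwise convergence (Helly's selection theorem). Along such a subsequence the values $g_nh$ converge pointwise on $G^{\UC}$, which is precisely weak-Cauchy convergence in $\RR^{G^{\UC}}$; by Fact~\ref{tame} this gives $h\in\Tame(G)$. Passing to the closed algebra generated by these $h$ then yields $\UC(G)\subset\Tame(G)$.

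\textbf{The main obstacle.} The delicate part is not the monotonicity selection itself but matching it correctly to the coordinates on $G^{\UC}$: the Roelcke compactification of $H_+[0,1]$ consists of \emph{degenerate} monotone relations (graphs may develop vertical and horizontal segments in the limit), so one must check that pointwise limits of the generating functions are still continuous on the \emph{compactified} space and that the Helly subsequence is chosen uniformly enough to handle all points of $G^{\UC}$ simultaneously (a diagonalization over a countable dense set of parameters $s,t$, together with the left uniform continuity that defines $\UC(G)$, should suffice). I expect the bookkeeping of these limit relations—ensuring the extracted subsequence makes $g_nh$ Cauchy at every $p\in G^{\UC}$ and not merely at evaluation points coming from $[0,1]$—to be where the real work lies; the underlying reason tameness holds, however, is simply that everything in sight is governed by a single monotone parameter, so no independent Boolean pattern of the kind in Fact~\ref{kerrli} can be produced.
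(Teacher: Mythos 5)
The paper does not actually prove this theorem: it is quoted from Glasner and Megrelishvili (\cite{glameg14}, Theorem 8.1), so there is no internal argument to compare yours against. Your outline does identify the right mechanism --- monotonicity and a Helly-type selection principle are indeed why the Roelcke compactification of $H_+[0,1]$ is tame --- but as written it has a genuine gap at its central step, and neither of the repairs you propose closes it. First, the generating family you suggest (functions ``measuring the position of $g(s)$ relative to $t$'', by analogy with $a<gb$ for $\Aut(\QQ,<)$) does not generate $\UC(G)$: such functions are either discontinuous or fail to be left uniformly continuous --- the paper notes immediately after the theorem that already $g\mapsto g(1/2)$ is not in $\LUC(G)$. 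Second, and more seriously, extracting a subsequence so that $g_n^{-1}$ converges \emph{in $G^{\UC}$} (equivalently, the graphs converge in the Hausdorff metric, equivalently $g_n^{-1}(t)$ converges for $t$ in a dense set) does not imply that $g_n^{-1}y$ converges for every $y\in G^{\UC}$, which is what pointwise (weak-Cauchy) convergence of $g_nh$ on $G^{\UC}$ requires. That implication is an instance of continuity of right translations on $G^{\UC}$, i.e., of $G^{\UC}$ being a right topological semigroup compactification --- and by the paper's own discussion in Section \ref{wap=suc} this holds if and only if $\SUC(G)=\UC(G)$, which fails here since $\SUC(G)=\RR$. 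Concretely, two sequences of homeomorphisms whose graphs have the same Hausdorff limit but whose pointwise limits disagree at a jump point $t_0$ act differently in the limit on a monotone relation containing a horizontal segment at height $t_0$. For the same reason, pointwise convergence of $g_nh$ on the dense subset $G\subset G^{\UC}$ does not propagate to all of $G^{\UC}$.

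The gap is repairable, and the repair is where the actual content lies: Helly's selection theorem yields a subsequence along which $g_n^{-1}(t)$ converges at \emph{every} $t\in[0,1]$, including the jump points of the limit, and this is strictly finer information than convergence in $G^{\UC}$. One must then verify that this everywhere-pointwise data determines the Hausdorff limit of $(\mathrm{id}\times g_n^{-1})(R)$ for every degenerate monotone relation $R\in G^{\UC}$, treating separately the graph portions, the horizontal segments (where the value of the limit function at a jump genuinely matters), and the vertical segments. Your proposal flags this bookkeeping as ``where the real work lies'' but the plan as stated leans on the false step above rather than on this finer analysis; as it stands the key convergence claim is asserted, not proved. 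For the argument actually used, see \cite{glameg14}.
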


See \cite{glameg14}, Theorem 8.1. As remarked there, the inclusion is strict: the function $f\colon G\to\RR$ given by $f(g)=g(1/2)$, for example, is tame (it comes from the Helly space, which is a Rosenthal compactification of $G$) but not left uniformly continuous: $\sup_g|f(gh)-f(g)|=1$ for any $h\in G$ with $h(1/2)\neq 1/2$. In fact, $f$ is even null: it is clear that, for reals $r<s$, there are no increasing functions $g_0,g_1\in [0,1]^{[0,1]}$ and elements $x_{\{0\}},x_{\{1\}}\in [0,1]$ such that $g_i(x_I)<r$ if $i\in I$ and $g_i(x_I)>s$ if $i\notin I$. One deduces that $\UC(G)\subsetneq\Null(G)$.

Additionally, as we have already recalled, the celebrated result of \cite{meg01} says that $H_+[0,1]$ is $\WAP$-trivial. On the other hand, one of the main results of \cite{glameg08} (Theorem 8.3) is the stronger fact that $H_+[0,1]$ is $\SUC$-trivial. In turn, this allows the authors to deduce that $\Iso(\mathbb{U}_1)$ is also $\SUC$-trivial (\cite[\textsection 10]{glameg08}). By our Theorem \ref{main} we can recover these facts directly from the $\WAP$-triviality of these groups, and extend the conclusion to another interesting Roelcke precompact Polish group that is also known to be $\WAP$-trivial.

\begin{cor} The groups $H_+[0,1]$ and $\Iso(\mathbb{U}_1)$ are $\SUC$-trivial. The same is true for the homeomorphism group of the Lelek fan.\end{cor}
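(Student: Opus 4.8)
The plan is to deduce all three statements from the equality $\SUC(G)=\WAP(G)$ established for arbitrary Roelcke precompact Polish groups in Theorem~\ref{main}. Indeed, for such a group $G$ one has $\SUC(G)=\WAP(G)$, so $G$ is $\SUC$-trivial if and only if it is $\WAP$-trivial. Thus the whole corollary reduces to checking, for each of the three groups, two things: that the group is a Roelcke precompact Polish group (so that Theorem~\ref{main} applies), and that it is $\WAP$-trivial. None of this requires revisiting the $\Asp$ or $\SUC$ machinery directly; the point is precisely that Theorem~\ref{main} upgrades $\WAP$-triviality to $\SUC$-triviality for free.

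For $G=H_+[0,1]$ the Roelcke precompactness is recorded in \cite{uspComp}, Example~4.4, and the $\WAP$-triviality is the theorem of Megrelishvili \cite{meg01} recalled above; applying Theorem~\ref{main} gives $\SUC(G)=\WAP(G)=\RR$. For $G=\Iso(\mathbb{U}_1)$, Roelcke precompactness holds because $G$ is the automorphism group of the $\aleph_0$-categorical structure $\mathbb{U}_1$. Here the $\WAP$-triviality can be obtained internally, without appealing to \cite{glameg08}: since $\WAP(G)\subset\SUC(G)\subset\UC(G)=\RUC_u(G)$ we have $\WAP(G)=\WAP_u(G)$, and as $\WAP\subset\Tame$ in the hierarchy this yields $\WAP(G)=\WAP_u(G)\subset\Tame_u(G)$; by Theorem~\ref{TameIsoU is trivial} the latter algebra is trivial, so $\WAP(G)=\RR$, and Theorem~\ref{main} again gives $\SUC(G)=\RR$.

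The remaining case, the homeomorphism group $G$ of the Lelek fan, is handled in exactly the same way, and this is where the only genuinely external input is needed. One must quote from the literature that this $G$ is a Roelcke precompact Polish group and that it is $\WAP$-trivial; granting these two facts, Theorem~\ref{main} immediately delivers $\SUC(G)=\WAP(G)=\RR$. I expect this last step to be the main obstacle, not because of any difficulty in the argument itself---it is identical to the previous two cases---but because it rests on the structural results establishing Roelcke precompactness and $\WAP$-triviality for the Lelek fan group, which lie outside the scope of the present paper and must be cited rather than reproved.
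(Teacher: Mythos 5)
Your proposal is correct and follows essentially the same route as the paper: both deduce $\SUC$-triviality from $\WAP$-triviality via Theorem~\ref{main}, citing the literature for the $\WAP$-triviality of $H_+[0,1]$ and of the Lelek fan group, and noting (as the paper also does) that for $\Iso(\mathbb{U}_1)$ the $\WAP$-triviality can be obtained internally from Theorem~\ref{TameIsoU is trivial} since $\WAP(G)\subset\Tame(G)\cap\UC(G)$. The only cosmetic difference is that the paper attributes the $\WAP$-triviality of $\Iso(\mathbb{U}_1)$ primarily to Pestov's earlier result, offering the internal derivation as an alternative.
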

\begin{proof} The $\WAP$-triviality of $\Iso(\mathbb{U}_1)$ was first observed in \cite{pesUrysohn}, Corollary 1.4, using the analogous result for $H_+[0,1]$; an alternative proof is given in \cite{bentsa}, Example 6.4, and of course also follows from Theorem \ref{TameIsoU is trivial} above. For the homeomorphism group of the Lelek fan, $\WAP$-triviality was proven in \cite{bentsa}: see the discussion after Corollary 4.10 and the references therein.\end{proof}

The previous facts about the group $H_+[0,1]$ lead to an interesting model-theoretic example, addressed in the following corollary.

If $f(x,y)$ is a formula in the variables $x,y$ (of arbitrary length), let us say that $f(x,y)$ is \emph{separated} if it is equivalent to a continuous combination of definable predicates $f_i(z_i)$ where, for each $i$, $z_i=x$ or $z_i=y$. Equivalently, $f(x,y)$ is separated if it factors through the product of type spaces ${S_x(\emptyset)\times S_y(\emptyset)}$ (by the Stone--Weierstrass theorem, the continuous functions on a product $X\times Y$ of compact Hausdorff spaces is the closed algebra generated by the continuous functions that depend only on $X$ or on $Y$). Of course, separated formulas are stable. Let us say that a structure is \emph{purely unstable} if every stable formula $f(x,y)$ is separated. No infinite classical structure can be purely unstable, since the identity relation $x=y$ is always stable and never separated.

\begin{cor} The $\aleph_0$-categorical structure $M=\widehat{G}_L$ associated to $G=H_+[0,1]$ is purely unstable and $\NIP$.\end{cor}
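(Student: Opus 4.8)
The plan is to read off both properties from the two topological facts about $G=H_+[0,1]$ recorded above---its $\WAP$-triviality (\cite{meg01}) and the inclusion $\UC(G)\subset\Tame(G)$ of Theorem \ref{megrelishvili}---transported to the $\aleph_0$-categorical structure $M=\widehat{G}_L$ through the dictionary assembled at the start of Section \ref{Examples}. The relevant entries are: every $h\in\UC(G)$ has the form $g\mapsto f(a,gb)$ for a formula $f(x,y)$ and tuples $a,b$ from $M$ (Proposition \ref{Form=UC}); such a function is $\WAP$ exactly when $f(x,y)$ is stable on $[a]\times[b]$ (Fact \ref{stable is WAP}); and it is tame exactly when $f(x,y)$ is $\NIP$ on $[a]\times[b]$ (Proposition \ref{tameNIP}). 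Since $M$ is $\aleph_0$-categorical it is $\emptyset$-saturated, all orbit closures $[a]=\overline{Ga}$ are definable, and these closures coincide with the $\emptyset$-type classes; so Propositions \ref{NIP} and \ref{tameNIP}, Fact \ref{stable is WAP} and Lemma \ref{symmetry} are all at our disposal.

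For the $\NIP$ claim I would first observe that $\Tame_u(G)=\Tame(G)\cap\UC(G)=\UC(G)$, the last equality being Theorem \ref{megrelishvili}. Thus every $g\mapsto f(a,gb)$ is tame, and by the tame/$\NIP$ correspondence every formula $f(x,y)$ is $\NIP$ on $[a]\times[b]$ for all parameters $a,b$. To upgrade this orbit-wise statement to $\NIP$ on the full product $M^\alpha\times M^\beta$ I would invoke the indiscernibility characterization of Proposition \ref{NIP}(2): an indiscernible sequence $(a_i)_{i<\omega}\subset M^\alpha$ satisfies $\tp(a_i)=\tp(a_1)$ for every $i$, hence lies entirely in the single orbit closure $[a_1]$, while each $b\in M^\beta$ lies in its orbit $[b]$. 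Therefore the requirement that $(f(a_i,b))_{i<\omega}$ converge for every indiscernible sequence and every $b$ is exactly the conjunction, over all orbits, of the conditions ``$f$ is $\NIP$ on $[a]\times[b]$'', which we already have. So every formula is $\NIP$ on $M^\alpha\times M^\beta$, i.e.\ $M$ is $\NIP$.

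For pure instability, let $f(x,y)$ be stable on $M^\alpha\times M^\beta$. Stability on the whole product entails stability on every subproduct $[a]\times[b]$, so by the stable/$\WAP$ correspondence each function $g\mapsto f(a,gb)$ lies in the trivial algebra $\WAP(G)=\RR$; being constant, it gives $f(a,b')=f(a,b)$ for every $b'\in[b]$, and thus, since $[b]$ is the $\tp(b)$-class, $f(a,b)$ depends only on $\tp(b)$ once $a$ is fixed. Applying the same reasoning to $\tilde f(y,x)=f(x,y)$, which is stable by Lemma \ref{symmetry}, shows that $f(a,b)$ depends only on $\tp(a)$ once $b$ is fixed. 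Combining the two one-sided dependences yields $f(a,b)=f(a',b')$ whenever $\tp(a)=\tp(a')$ and $\tp(b)=\tp(b')$; that is, $f$ factors through $S_x(\emptyset)\times S_y(\emptyset)$ and so is separated. Hence $M$ is purely unstable.

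The two correspondences are the routine inputs; the step requiring care is the passage from the orbit-wise conclusions to the global ones. On the $\NIP$ side this rests on the feature, special to the $\aleph_0$-categorical setting, that an indiscernible sequence cannot leave a single orbit, so no new independence configuration arises by mixing orbits. On the stability side the crux is the symmetry of $f$, which is what lets one promote the two separate one-variable constancies into joint factorization through the \emph{product} of type spaces; one must bear in mind that ``separated'' records only the two individual types (not the joint type of $(a,b)$), which is precisely why $\WAP$-triviality---forcing constancy in each variable separately---is the exactly matching hypothesis.
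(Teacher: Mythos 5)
Your proof is correct, and the two halves fare differently against the paper's. The pure-instability half is essentially the paper's own argument: $\WAP$-triviality makes $g\mapsto f(a,gb)$ constant for stable $f$, so $f(a,b)$ depends only on $\tp(a),\tp(b)$ and $f$ factors through $S_x(\emptyset)\times S_y(\emptyset)$; you merely make explicit, via Lemma \ref{symmetry}, the symmetry step the paper leaves implicit (one could even bypass it using $G$-invariance alone: $f(ga,hb)=f(a,g^{-1}hb)=f(a,b)$). The $\NIP$ half genuinely diverges. The paper extracts from Theorem \ref{megrelishvili} and Proposition \ref{tameNIP} only that formulas with $|y|=1$ are $\NIP$, and then cites the standard but nontrivial model-theoretic reduction from singletons to arbitrary tuples (Proposition 2.11 of \cite{sim14}, adapted to the metric setting). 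You instead apply the dictionary of Section \ref{Examples} with $y$ of arbitrary countable length to get $\NIP$ on every $[a]\times[b]$ directly, and then globalize by noting that an indiscernible sequence has constant type and hence stays inside one orbit closure, while every element (of $M^\beta$, or of an elementary extension, since types over $\emptyset$ are realized) falls under some orbit-wise instance; condition (2) of Proposition \ref{NIP} for the full product then reduces to the instances already in hand. Your route is self-contained within the paper's machinery and avoids the external lemma; the paper's is shorter on the page but outsources the real work of the tuple reduction. Both are valid.
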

\begin{proof}
Of course, $\WAP$-triviality implies that $M$ is purely unstable: if $f(x,y)$ is stable and $a,b$ are parameters, then the function $g\mapsto f(a,gb)$ belongs to $\WAP(G)$ and so is constant. It follows that the value of $f$ on $a,b$ only depends on $[a],[b]$, that is, on $\tp(a),\tp(b)$ since $M$ is $\aleph_0$-categorical; hence $f(x,y)$ is separated.

On the other hand, Theorem \ref{megrelishvili} and Proposition \ref{tameNIP} imply that every formula $f(x,y)$ with $|y|=1$ is NIP. A well-known argument (see for example Proposition 2.11 in \cite{sim14}, which adapts easily to the metric setting), shows that then every formula is NIP.
\end{proof}

\medskip

We finish with a remark relating sections \ref{wap=suc} and \ref{tame=nip} of this paper. Since reflexive-representable functions correspond to stable formulas and Rosenthal-representable functions correspond to $\NIP$ formulas, it is not surprising that, as we have seen, the natural intermediate subalgebra of Asplund-representable functions collapses to one of the other two: on the model-theoretic side, there is no known natural notion between stable and $\NIP$. However, one might be slightly surprised to find that $\WAP=\Asp$ rather than $\Asp=\Tame_u$ (although, in fact, this was already known for $G=H_+[0,1]$). Indeed, Asplund and Rosenthal Banach spaces were once difficult to distinguish, with the first examples coming in the mid-seventies from independent works of James and of Lindenstrauss and Stegall. It is thus worthy to remark that, via our results and the Banach space construction of Glasner and Megrelishvili \cite{glameg12} (Theorem 6.3), every $\NIP$ unstable $\aleph_0$-categorical structure yields an example of a Rosenthal non-Asplund Banach space.

\noindent\hrulefill

\vspace{-.02in}

\bibliographystyle{amsalpha}
\bibliography{mybiblio}

\end{document}